\documentclass[10pt,twoside]{article}

\usepackage{amssymb}
\usepackage{amsbsy} 
\usepackage{amstext}
\usepackage{enumerate}
\usepackage{verbatim} %incluye el entorno comment
\usepackage{graphicx}
%\graphicspath{}

%\frenchspacing

\textwidth=16cm
\textheight=24cm
\parindent=16pt
\oddsidemargin=-0.5cm
\evensidemargin=-0.5cm
\topmargin=-2.0cm

\usepackage{theorem}
\newtheorem{theor}{Theorem}[section]
\newtheorem{prop}[theor]{Proposition}
\newtheorem{coro}[theor]{Corollary}
\newtheorem{lemma}[theor]{Lemma}

\newtheorem{remark}[theor]{Remark}

% entorno DEMOSTRACIÓN
\newenvironment{proof}
	{\par {\bf Proof:}}
 	{\hfill $\square$ \medskip}

\newcommand{\slim}{\mathop{\mbox{\rm s-lim}}}

\def\mf{\mathfrak}
\def\mc{\mathcal}
\def\ds{\displaystyle}
\def\be{\begin{equation}}
\def\ee{\end{equation}}

\def\CC{{\mathfrak C}}
\def\H{{\mathcal H}}
\def\M{{\mathfrak M}}

\def\N{\mathbb N}
\def\Z{\mathbb Z}
\def\R{\mathbb R}
\def\C{\mathbb C}
\def\D{\mathbb D}
\def\I{\mathbb I}
\def\J{\mathbb J}

\def\la{\lambda}
\def\om{\omega}
\def\<{\langle}
\def\>{\rangle}

\title{Univariate tight wavelet frames of minimal support}
\author{F. G\'omez-Cubillo$^1$, S. Villullas$^2$}

\pagestyle{myheadings}
\markboth{F. G\'omez-Cubillo, S. Villullas}{Univariate tight wavelet frames of minimal support}

\begin{document}

\maketitle
\begin{center}
{\small $^1$Dpto de An\'alisis Matem\'atico, Instituto de Investigaci\'on en Matem\'aticas, Universidad de Valladolid, Facultad de Ciencias, 47011 Valladolid, Spain. fgcubill@am.uva.es.} 

{\small $^2$Dpto de Econom\'{\i}a, Universidad Carlos III de Madrid,
C/Madrid 126, 28903 Getafe (Madrid), Spain. svillull@uc3m.es.}
\end{center}

\begin{abstract}
Wavelet frames for $L^2({\mathbb R})$ can be characterized by means of  spectral techniques. This work uses spectral formulas to determine all the tight wavelet frames for $L^2(\R)$ with a fixed finite number of generators of minimal support. The method associates wavelet frames of this type with certain inner operator-valued functions in Hardy spaces. The cases with one and two generators are completely solved.

\smallskip
%\noindent
{\bf Keywords:} wavelet frames, Hardy spaces, inner functions

\smallskip
%\noindent
{\bf 2010 MSC:} 42C15, 30J05
\end{abstract}

%-------------------------------
\section{Introduction}\label{s1}

Let $T$ and $D$ be the translation and (dyadic) dilation unitary operators on $L^2(\R)$ defined by 
\be\label{tdo}
[Tf](x):=f(x-1)\,,\quad [ D f](x):=2^{1/2}\,f(2x)\,,\quad  (f\in L^2(\R)).
\ee 
Given a finite or countable subset $\Psi$ of $L^2(\R)$, the (dyadic) {\bf  wavelet system} of $L^2(\R)$ generated by $\Psi$ is of the form
\be\label{ws}
X=X_\Psi:=\big\{\psi_{k,j}:=D^kT^j\psi: \psi\in\Psi,\,k,j\in\Z\big\}.
\ee
$X_\Psi$ is called a {\bf wavelet frame} for $L^2(\R)$  if there exist constants $A,B>0$ such that
\be\label{c5.2}
A\,||f||^2\leq \sum_{\psi\in\Psi}\sum_{k,j\in\Z} |\<f,\psi_{k,j}\>|^2\leq B\,||f||^2,\quad (f\in L^2(\R)).
\ee
If, in addition, $A=B$, it is said that $X_\Psi$ is a {\bf tight wavelet frame}. 
The frame property (\ref{c5.2}) confers on $X_\Psi$ good properties for analysis and synthesis in $L^2(\R)$.

A first paper \cite{GV19-1} of this series characterizes the wavelet frames for $L^2({\mathbb R})$ by means of {\it spectral techniques} and presents the usual {\it extension principles} of the theory in terms of the periodized Fourier transform.
Since the introduction of the extension principles \cite{RS97b,RS97a,CHS02,DRS03}, the main part of the literature devoted to the construction of wavelet frames uses them looking for the corresponding {\it framelet filter banks} and paying attention to properties like vanishing moments, symmetry, number of generators,  support, etc.
In the univariate case see, e.g., \cite{RS97a,CH00,CHS02,CHS02-1,DRS03,SA04,HM05,Han13,Han14,CKK14,Han15}. The multivariate case in $L^2(\R^d)$ is discussed in, e.g., \cite{LS06,SAZ16,FJS16,HL17,HJSZ18}.
  
In this work we use the spectral formulas obtained in \cite{GV19-1} to calculate all the tight wavelet frames for $L^2(\R)$ with a fixed finite number (say $r$) of generators of minimal support.  Like in \cite{GS11b}, Hardy classes of vector-valued functions and operator-valued inner functions play a central role.
We solve explicitly the cases $r=1$ and $r=2$.
To our knowledge, there are no papers in the literature studying this problem.
In the context of extension principles, the tight framelet filter banks with $r=1$ are characterized in \cite[Theorem 7]{Han15}. For $r=2$, Theorem 4.2 in \cite{Han13} gives the tight framelet filter banks with complex symmetry and other partial results can be found in, e.g., \cite{RS97a,CH00,CHS02,DRS03,SA04}.
In particular, the case with B-splines as refinable functions has been extensively
studied; see, e.g., \cite[Section 4.4]{FJS16} for details.

Section \ref{sect2} below introduces the terminology and notation used in spectral methods necessary along the paper. We also recall the spectral  characterization of tight wavelet frames for $L^2(\R)$ given in \cite[Corollary 3.6]{GV19-1}.

Section \ref{sect4} shows how \cite[Corollary 3.6]{GV19-1} permits us to determine all the tight wavelet frames for $L^2(\R)$ with a fixed number of generators of minimal support. 
Like in \cite{GS11b}, Hardy classes \cite{RR85,R62,HALMOS61} play a central role here.
In particular, operator-valued functions called {\it rigid Taylor operator functions}  by Halmos \cite{HALMOS61} and {\it $M^+$-inner functions} by Rosenblum and Rovnyak \cite{RR85}. See section \ref{sect41} below for details.
Roughly speaking,
Halmos lemma \ref{lwr}, Rovnyak lemma \ref{lRovnyak} and proposition \ref{prophh} imply the following result: 
\begin{quote}
{\it Let $X_\Psi$ be a wavelet system of the form (\ref{ws}), with cardinal of $\Psi$ finite, say $r$, and such that the support of each $\psi\in\Psi$ is included in the interval $[0,1]$.
Then, $X_\Psi$ is a tight wavelet frame for $L^2(\R)$ if and only if $\Psi$ is associated with an $M^+$-inner $(r\times r)$-matrix function $A^+(\om)$ satisfying certain properties.
} 
\end{quote}
This result comes from a particular choice of orthonormal bases 
$\big\{L_{i}^{(n)}\big\}$ and $\big\{K_{s,j}^{(m)}\big\}$ in the spectral method, the Haar orthonormal bases given in the appendix, and the corresponding  distribution of indices in the set of equations (\ref{ses1}) --see proposition \ref{prophv}, in particular, table \ref{table1}--. 

We discuss the cases $r=1$ and $r=2$ in sections \ref{sect42} and \ref{sect43}, respectively.
For $r=1$ the solution is given in corollary \ref{coro17}: 
\begin{quote}
{\it The only function $\psi\in L^2[0,1]$ such that the wavelet system $X_\Psi$ of the form (\ref{ws}) generated by $\Psi=\{\psi\}$ is a tight frame for $L^2(\R)$, with frame bound $B$, is proportional to the Haar wavelet:
$$
\psi=\beta\,[\chi_{[0,1/2)}-\chi_{[1/2,1)}],
$$
where $\beta\in\C$ and $|\beta|^2=B$. It is associated with the constant $M^+$-inner scalar function
$$
A^+:\partial \D\to\C:\om\mapsto \beta/|\beta|\,.
$$
}
\end{quote}
This result invalidates theorem 5 in \cite{GS11b}. 
See remark \ref{rm18} for more details.

For $r=2$, i.e., $\Psi=\{\psi_1,\psi_2\}\subset L^2[0,1]$, the $M^+$-inner $(2\times 2)$-matrix functions $A^+(\om)$ of interest appear in proposition \ref{pclc2a} and the final solution is given in propositions \ref{prop22} and \ref{prop23}:
\begin{quote}
{\it  
There are five types of families of $M^+$-inner ($2\times 2$)-matrix functions
$$
A^+(\om)= \left(\begin{array}{cc} {\mf a}^{(0)}_1(\om) & {\mf a}^{(1)}_1(\om)\\ {\mf a}^{(0)}_2(\om)& {\mf a}^{(1)}_2(\om)\end{array}\right)=
\left(\begin{array}{cc} {\mf a}^{(0)}(\om) & {\mf a}^{(1)}(\om)\end{array}\right)\,,\quad\text{ where } {\mf a}^{(0)},{\mf a}^{(1)}\in H^+_{\C^2}\,,
$$
leading to tight wavelet frames $X_\Psi$ of the form (\ref{ws}) generated by $\Psi$. They are as follows: 
\begin{itemize}
\item[Type 1.]
Given $u_0\in\C^2$, such that $||u_0||_{\C^2}=1$,
$$
\left\{\begin{array}{l}
\ds {\mf a}^{(0)}(\om)=u_0\,,
\\
{\mf a}^{(1)}(\om)=0\,.
\end{array}\right.
$$
\item[Type 2.]
Given an orthonormal basis $\{u_0,u_1\}$ of $\C^2$, 
$$
\left\{\begin{array}{l}
\ds {\mf a}^{(0)}(\om)=\om\,u_0\,,
\\
{\mf a}^{(1)}(\om)=u_1\,.
\end{array}\right.
$$
\item[Type 3.]
Given an orthonormal basis $\{u_0,u_1\}$ of $\C^2$, $0<\rho<1$ and $\theta\in\R$,
$$
\left\{\begin{array}{l}
\ds {\mf a}^{(0)}(\om)=\rho u_0+\om(1-\rho^2)^{1/2}u_1\,,
\\
\ds {\mf a}^{(1)}(\om)=e^{i\theta}[(1-\rho^2)^{1/2}u_0-\om\,\rho u_1]\,.
\end{array}\right.
$$
\item[Type 4.]
Given an orthonormal basis $\{u_0,u_1\}$ of $\C^2$, $0<\rho<1$ and $\theta\in\R$, 
$$
\left\{\begin{array}{l}
\ds {\mf a}^{(0)}(\om)=\big(\rho+(1-\rho^2)e^{i\theta}\sum_{k=1}^\infty\om^k\,\big(-\rho\,e^{i\theta}\big)^{k-1}\big)\,u_0\,,
\\
\ds {\mf a}^{(1)}(\om)=u_1\,.
\end{array}\right.
$$
\item[Type 5.]
Given $0<|\rho_0|<1$, choose three unitary vectors $u_0$, $u_1$ and $v$ in $\C^2$ such that (\ref{c2n0c3n0-5}) is satisfied.
Then, $|\rho_1|$ and $|\tau_0|$ are given by (\ref{c2n0c3n0-11}). Once the free arguments for $\rho_0$, $\rho_1$ and $\tau_0$ have been selected, say $\theta_{\rho_0}$, $\theta_{\rho_1}$ and $\theta_{\tau_0}$, the value of $r$ is determined by (\ref{c2n0c3n0-8}) and the value of $\tau_1$ is given by (\ref{c2n0c3n0-11}). Then,
$$
\left\{\begin{array}{l}
\ds {\mf a}^{(0)}(\om)=\rho_0u_0+\rho_1v\sum_{k=1}^\infty\om^k\,r^{k-1}\,,
\\
\ds {\mf a}^{(1)}(\om)=\tau_0u_1+\tau_1v\sum_{k=1}^\infty\om^k\,r^{k-1}\,.
\end{array}\right.
$$
\end{itemize}
}
\end{quote}

In order to obtain $\psi_1$ and $\psi_2$ from the function $A^+(\om)$ one must proceed in the following way. 
Since $\text{supp}\,\psi_j\subseteq [0,1]$, ($j=1,2$), their expansions (\ref{fdobl2}) read
$$
\psi_j=\sum_{i\in\N\cup\{0\}} [\hat\psi_j]_i^{(0)}L_i^{(0)}\,,\quad  (j=1,2)\,.
$$
Let us write 
$$
\Psi_i:=\left(\begin{array}{c} [\hat\psi_1]_i^{(0)} \cr [\hat\psi_2]_i^{(0)}\end{array}\right)\in\C^2\,,\quad (i\in\N\cup\{0\})\,.
$$
Then, for a frame bound $B>0$,
$$
\begin{array}{l}
\ds \sum_{k=0}^\infty \om^k\,\Psi_{2^k}=B_0\,{\mf a}^{(0)}(\om)\,,
\\[2ex]
\ds \sum_{k=0}^\infty \om^{k}\,\Psi_{2^{p+k+1}+l}=C_l\,{\mf a}^{(1)}(\om)\,,\quad (l\geq1,\,l=2^p+\sum_{t=0}^{p-1}l_t2^t)\,,
\end{array}
$$
where $B_0\in\C$ is such that $|B_0|^2=B$ and 
the sequence $\{C_l\}_{l\in\N}\subset\C$ is given by (\ref{c1x})--(\ref{c21x}) and (\ref{clg}).

Clearly, for type 1 functions $A^+(\om)$, both functions $\psi_1$ and $\psi_2$ are proportional to the Haar wavelet. For types 2--5 functions $A^+(\om)$, some examples of real functions $\psi_1$ and $\psi_2$ are shown in figures \ref{fig1}--\ref{fig4}.
As it can be seen in figures \ref{fig2} and \ref{fig3},  type 3 functions $A^+(\om)$ lead to the reflected version of functions $\psi_1$ and $\psi_2$ obtained from  type 4 functions $A^+(\om)$ with the same parameters.

Given an $M^+$-inner ($2\times 2$)-matrix function $A^+(\om)$ of type 2--5, also $B^{+}(\om)=A^{+}(\om) \cdot U$ is an $M^+$-inner ($2\times 2$)-matrix function of type 2--5, for every constant unitary ($2\times 2$)-matrix $U$. This relationship establishes connections between $M^+$-inner ($2\times 2$)-matrix functions of types 2 and 3 on the one hand and $M^+$-inner ($2\times 2$)-matrix functions of types 4 and 5 on the other hand. See proposition \ref{prop24} and comments that follow it. 
Thus, starting from a simple tight wavelet frame of type 2, one can obtain a frame of type 3 by means of a unitary matrix $U$, to get a frame of type 4 by reflection and, finally, to reach a frame of type 5 using again a unitary matrix $U'$.

%------
%\noindent
\begin{figure}
\footnotesize%\tiny
\begin{center}
\begin{tabular}{cccc}
\includegraphics[width=0.23\linewidth]{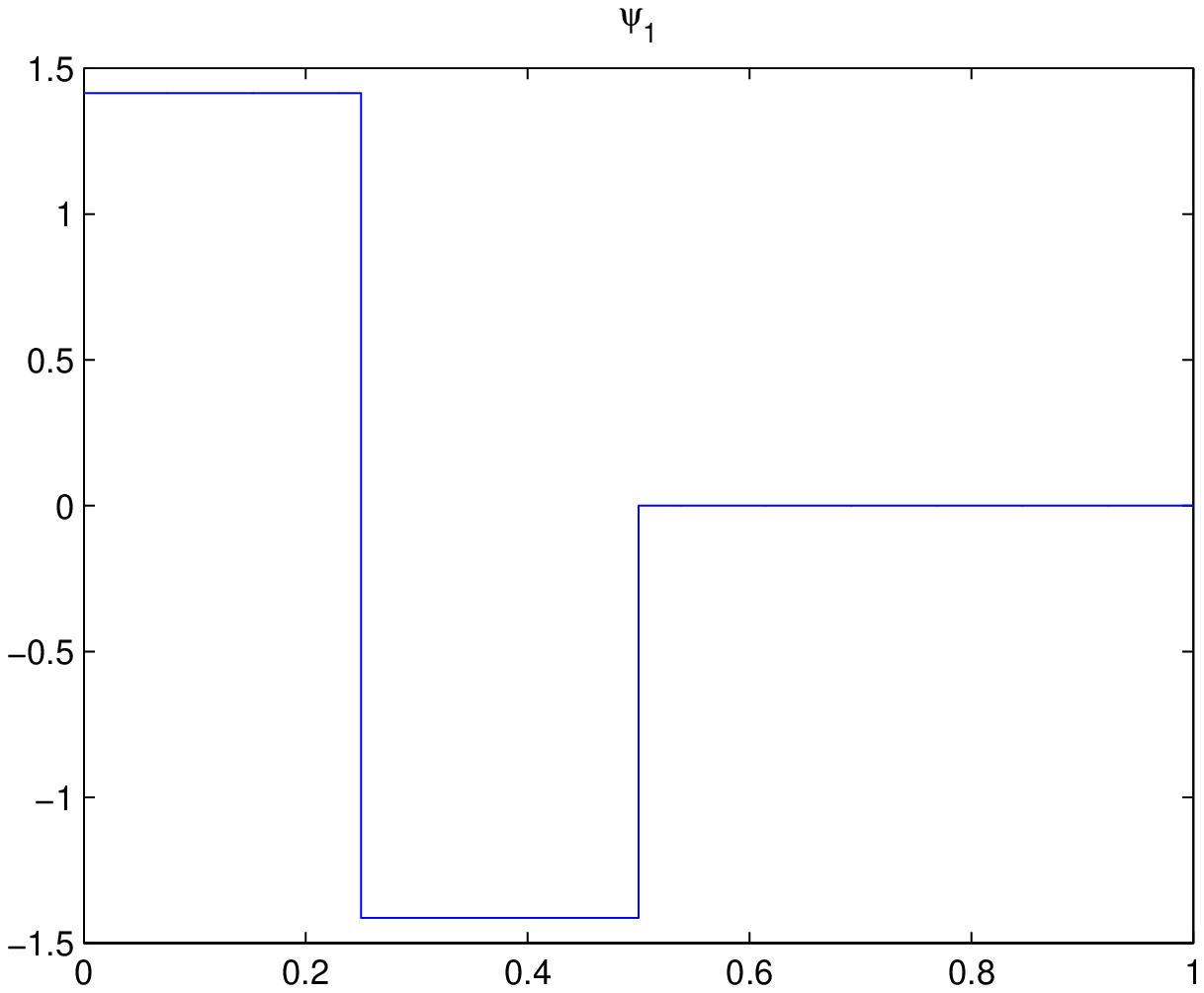} &
\includegraphics[width=0.23\linewidth]{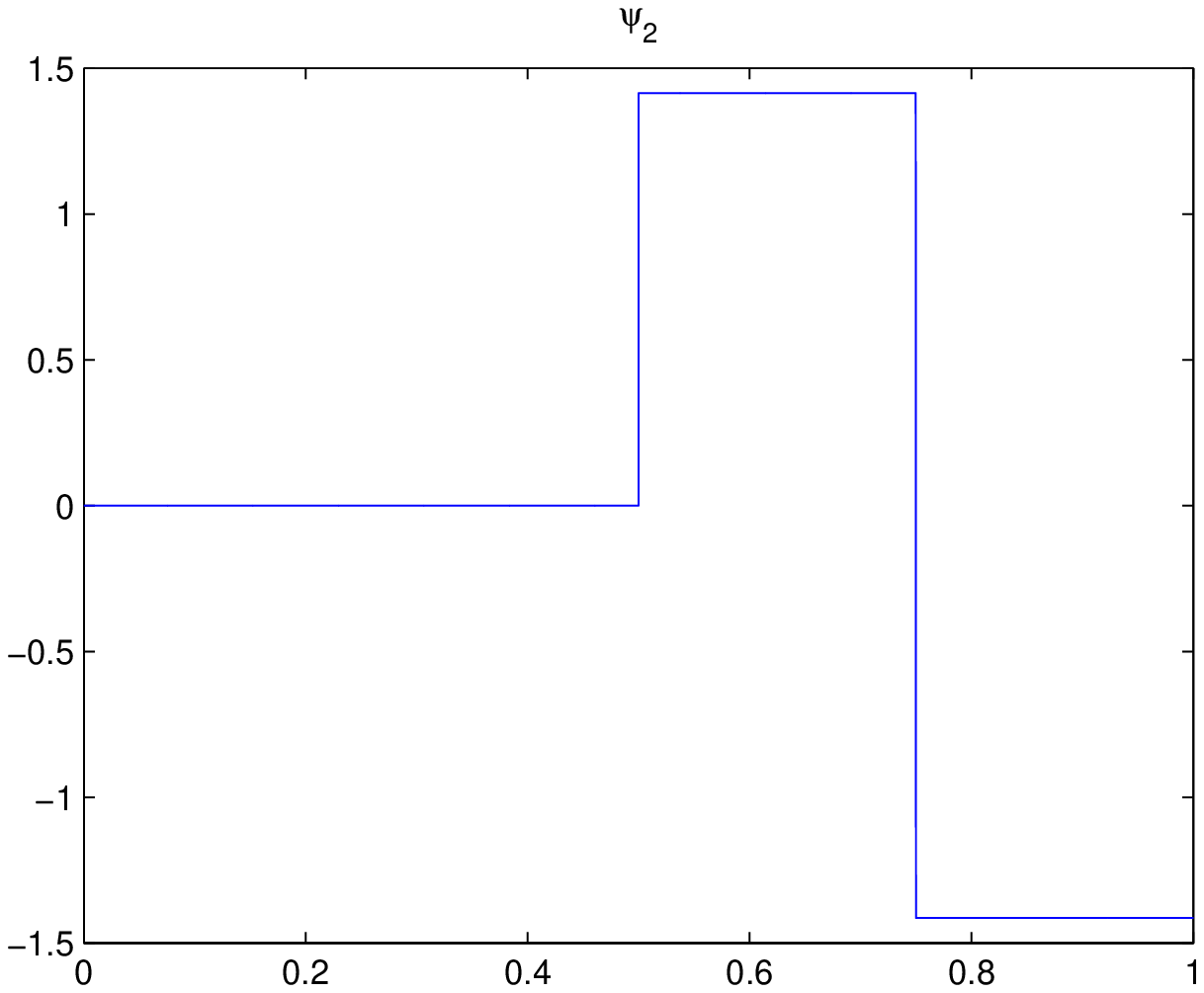} &
\includegraphics[width=0.23\linewidth]{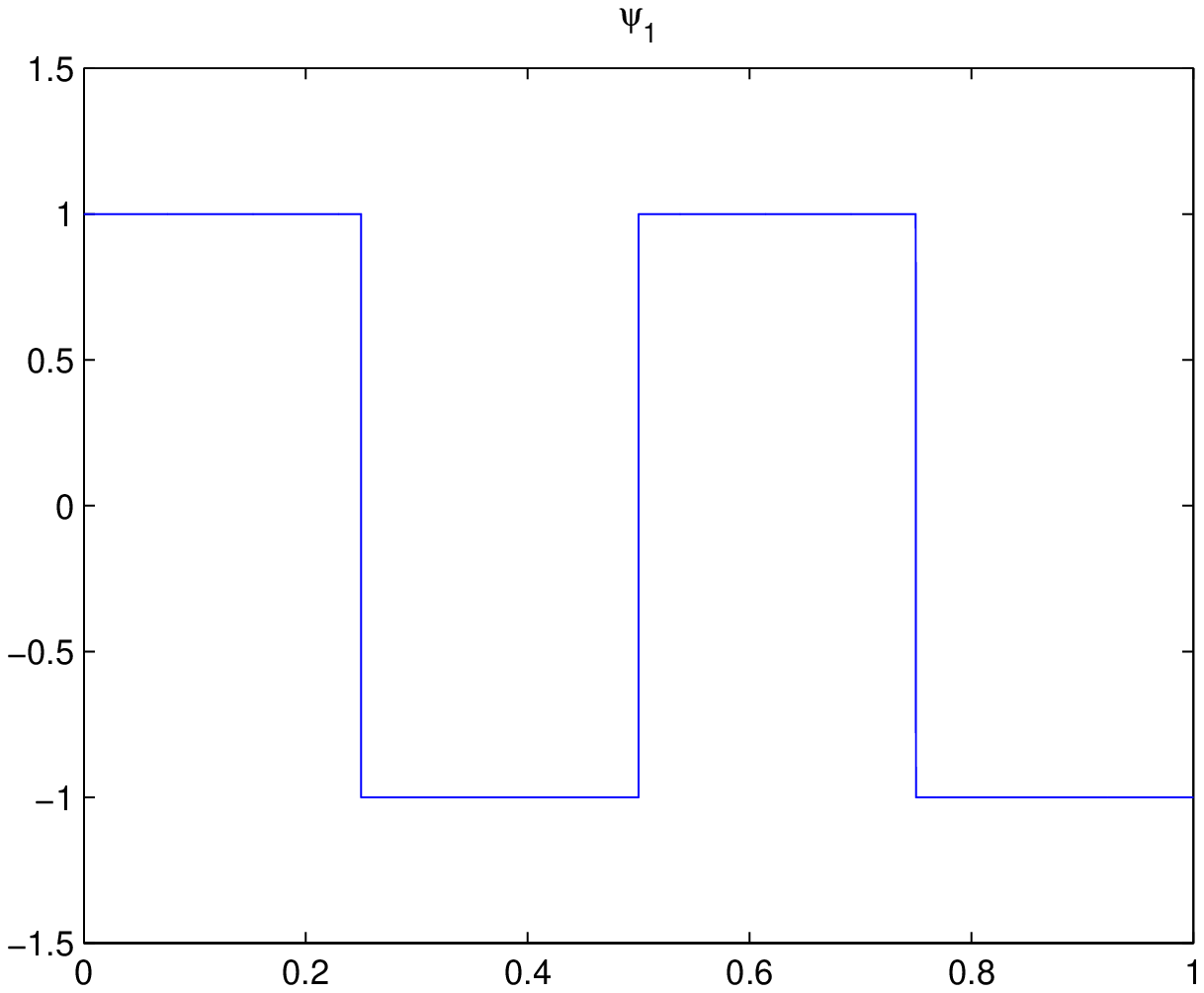} &
\includegraphics[width=0.23\linewidth]{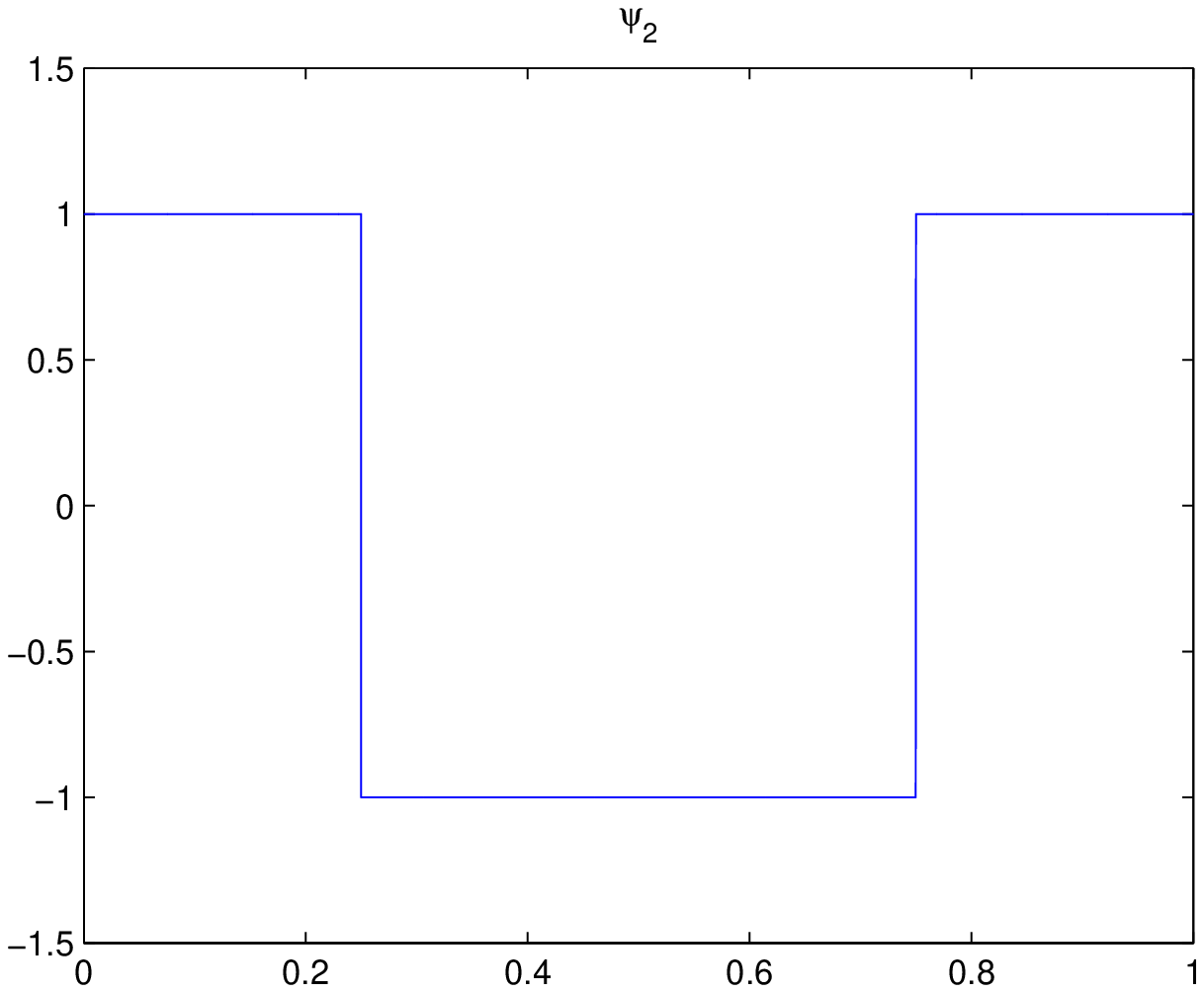} 
\\ 
\multicolumn{2}{c}{(a) Parameters: $u_0= (1,0), \, u_1=(0,1)$,}
&
\multicolumn{2}{c}{(b) Parameters: $u_0= (1,1)/\sqrt{2}, \, u_1= (1,-1)/\sqrt{2}$,}
\\ 
\multicolumn{2}{c}{$B_{0} = 1$ and $\text{arg}(C_{1}) = 0$.}
&
\multicolumn{2}{c}{$B_{0} = 1$ and $\text{arg}(C_{1}) = 0$.}
\end{tabular} 
\end{center}
\caption{Case $r=2$. Examples of real functions $\psi_1$ and $\psi_2$ of type 2.  \label{fig1}}
\end{figure}

\begin{figure}
\footnotesize%\tiny
\begin{center}
\begin{tabular}{cccc}
\includegraphics[width=0.23\linewidth]{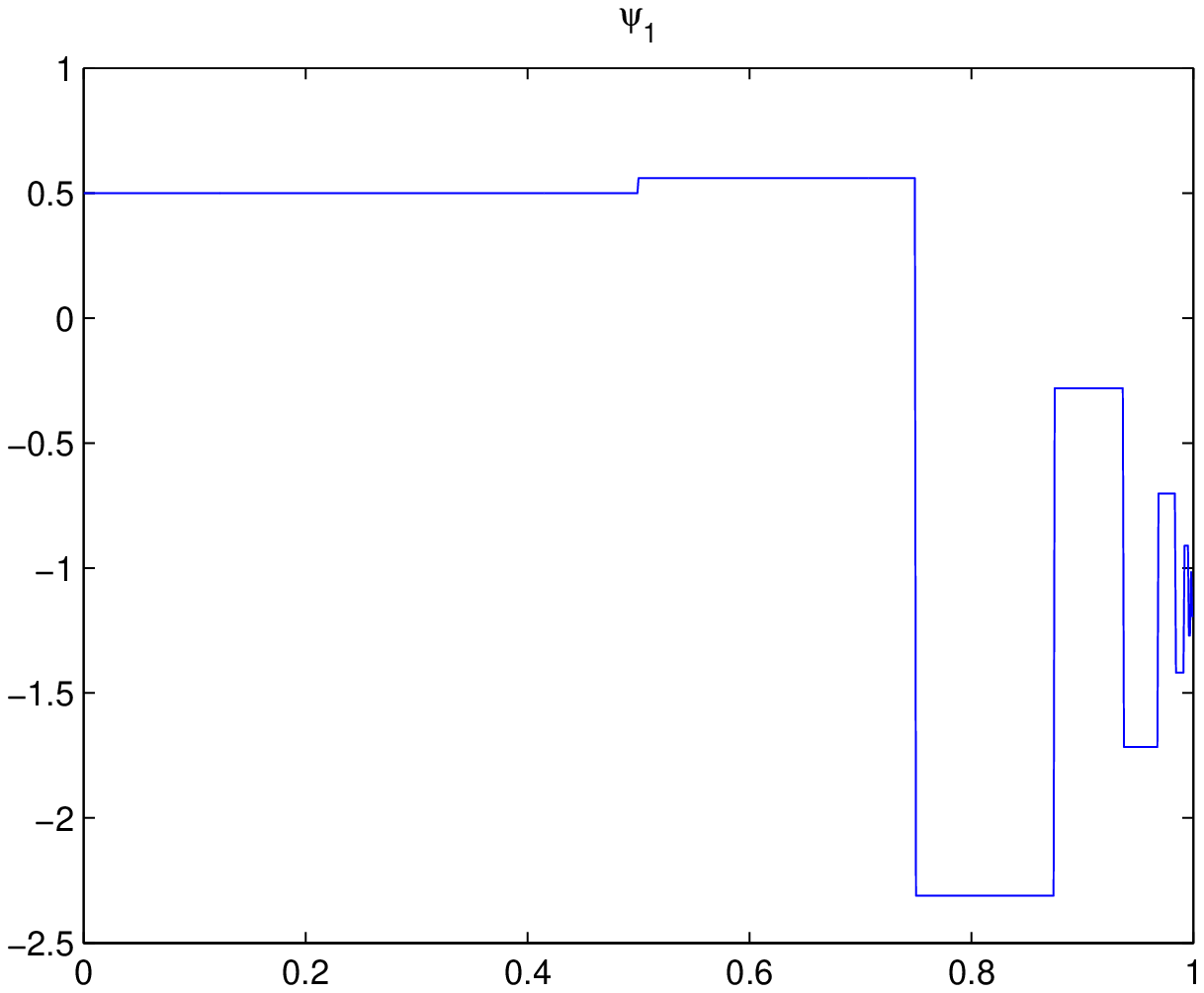} &
\includegraphics[width=0.23\linewidth]{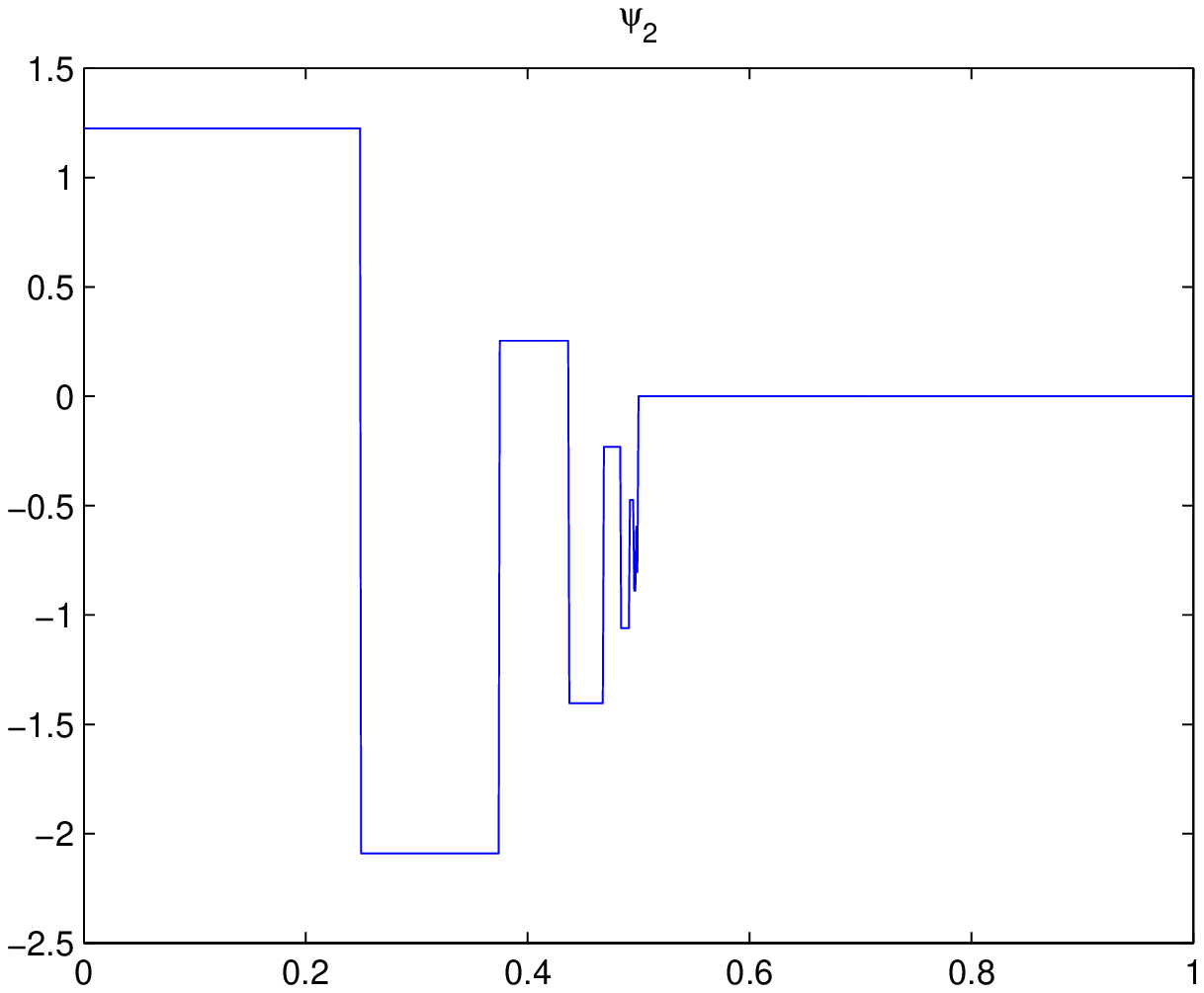} &
\includegraphics[width=0.23\linewidth]{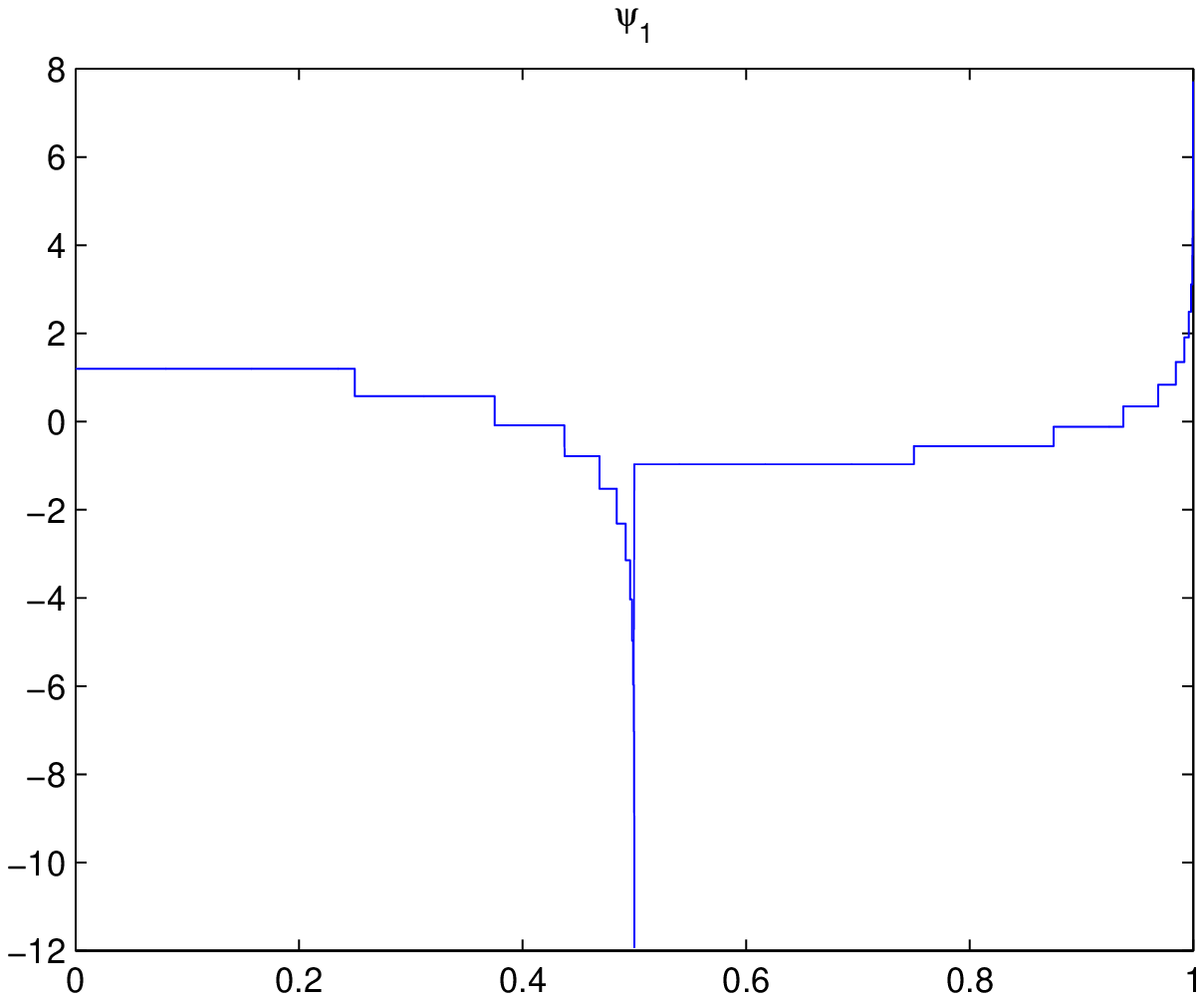} &
\includegraphics[width=0.23\linewidth]{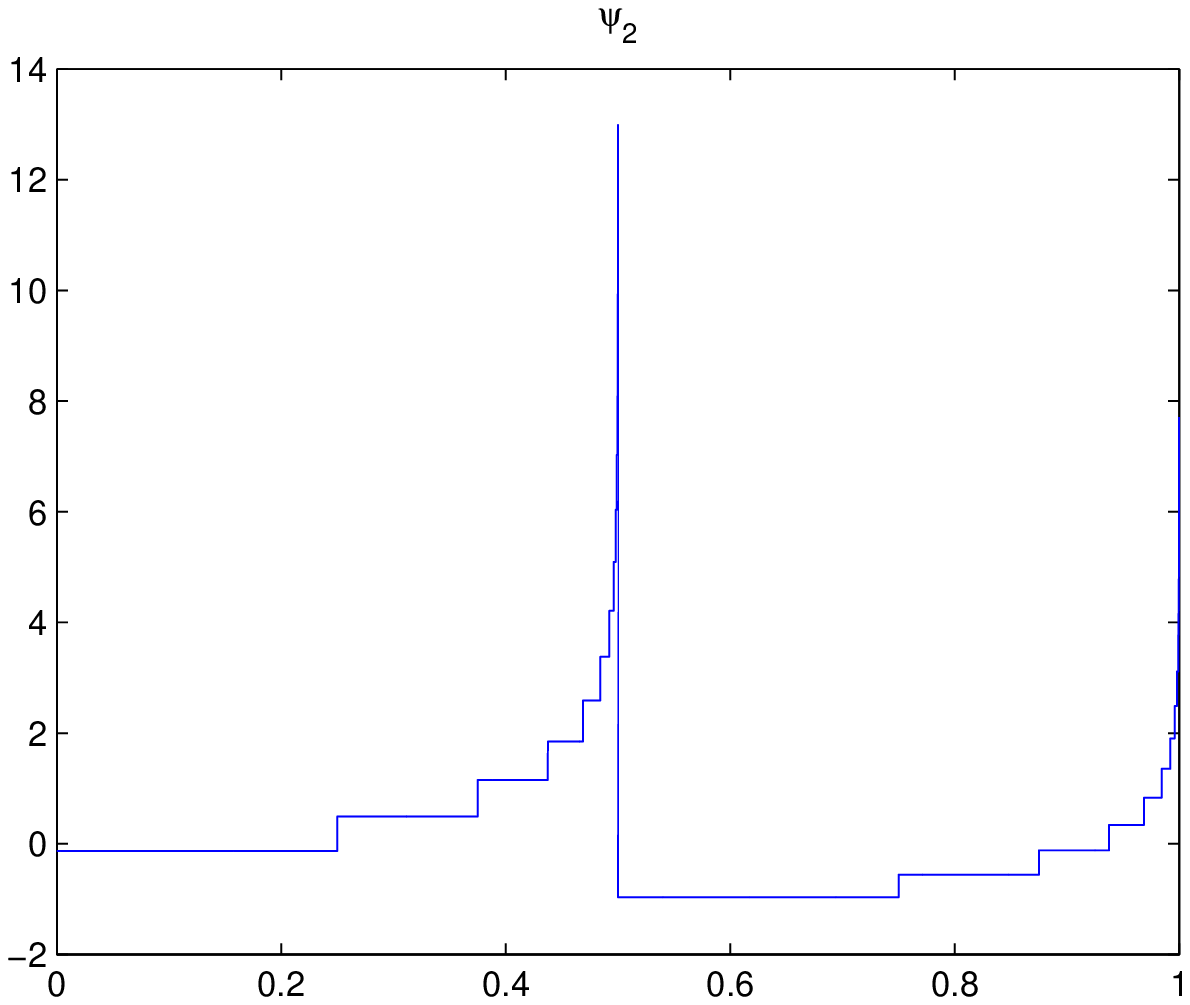} 
\\ 
\multicolumn{2}{c}{(a) Parameters: ${\mf u}_{0} = (1,0), \, {\mf u}_{1} = (0,1)$,} &
\multicolumn{2}{c}{(b) Parameters: ${\mf u}_{0} = (1,1)/\sqrt{2}, \, {\mf u}_{1} = (1,-1)/\sqrt{2}$,}
\\ 
\multicolumn{2}{c}{$\rho = 1/2$, $B_{0} = 1$, $\text{arg}(C_{1}) = 0$ and $\theta = 0$.} &
\multicolumn{2}{c}{$\rho = 3/4$, $B_{0} = 1$, $\text{arg}(C_{1}) = 0$ and $\theta = \pi$.}
\end{tabular} 
\end{center}
\caption{Case $r=2$. Examples of real functions $\psi_1$ and $\psi_2$ of type 3.  \label{fig2}}
\end{figure}

\begin{figure}
\footnotesize%\tiny
\begin{center}
\begin{tabular}{cccc}
\includegraphics[width=0.23\linewidth]{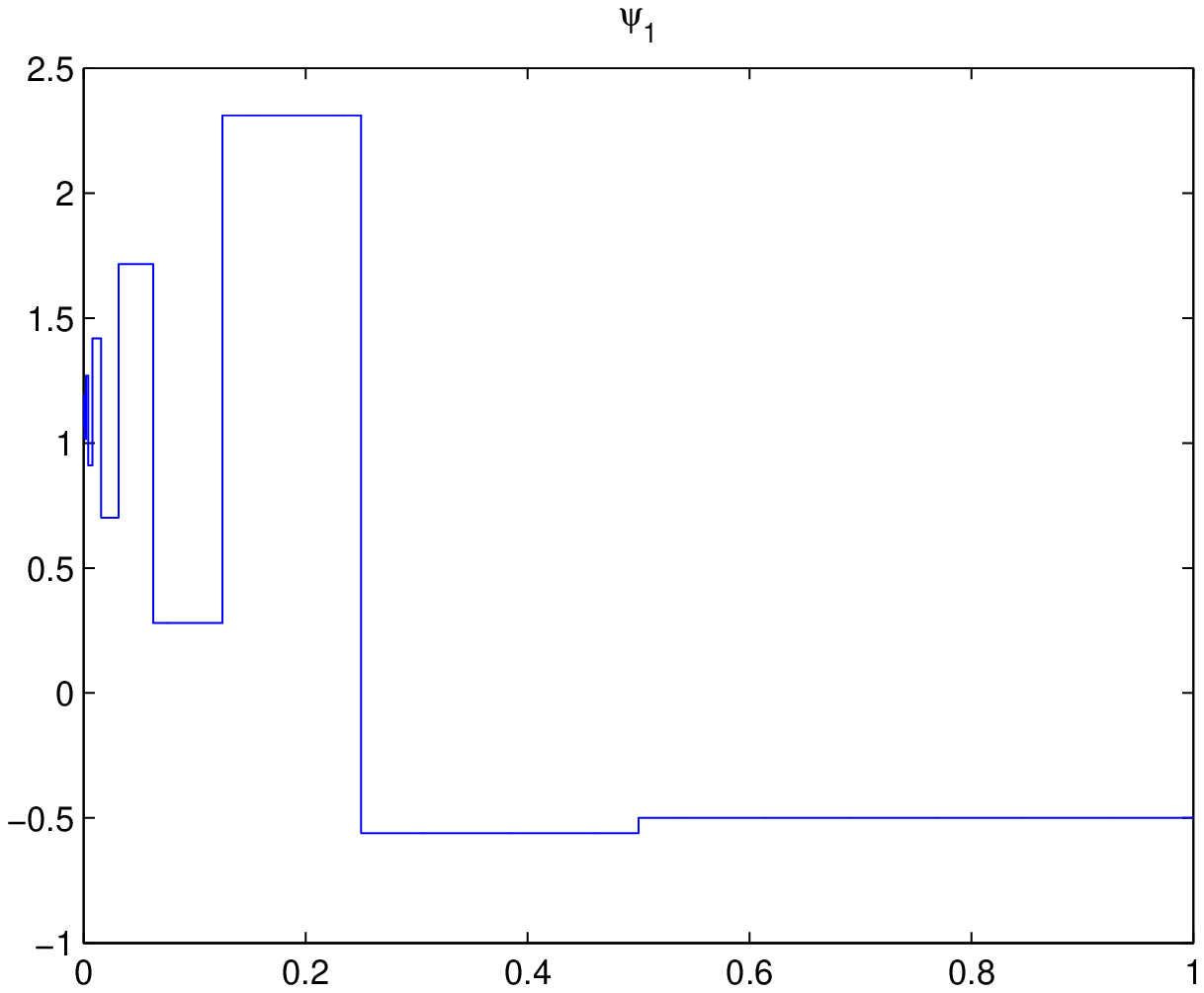} &
\includegraphics[width=0.23\linewidth]{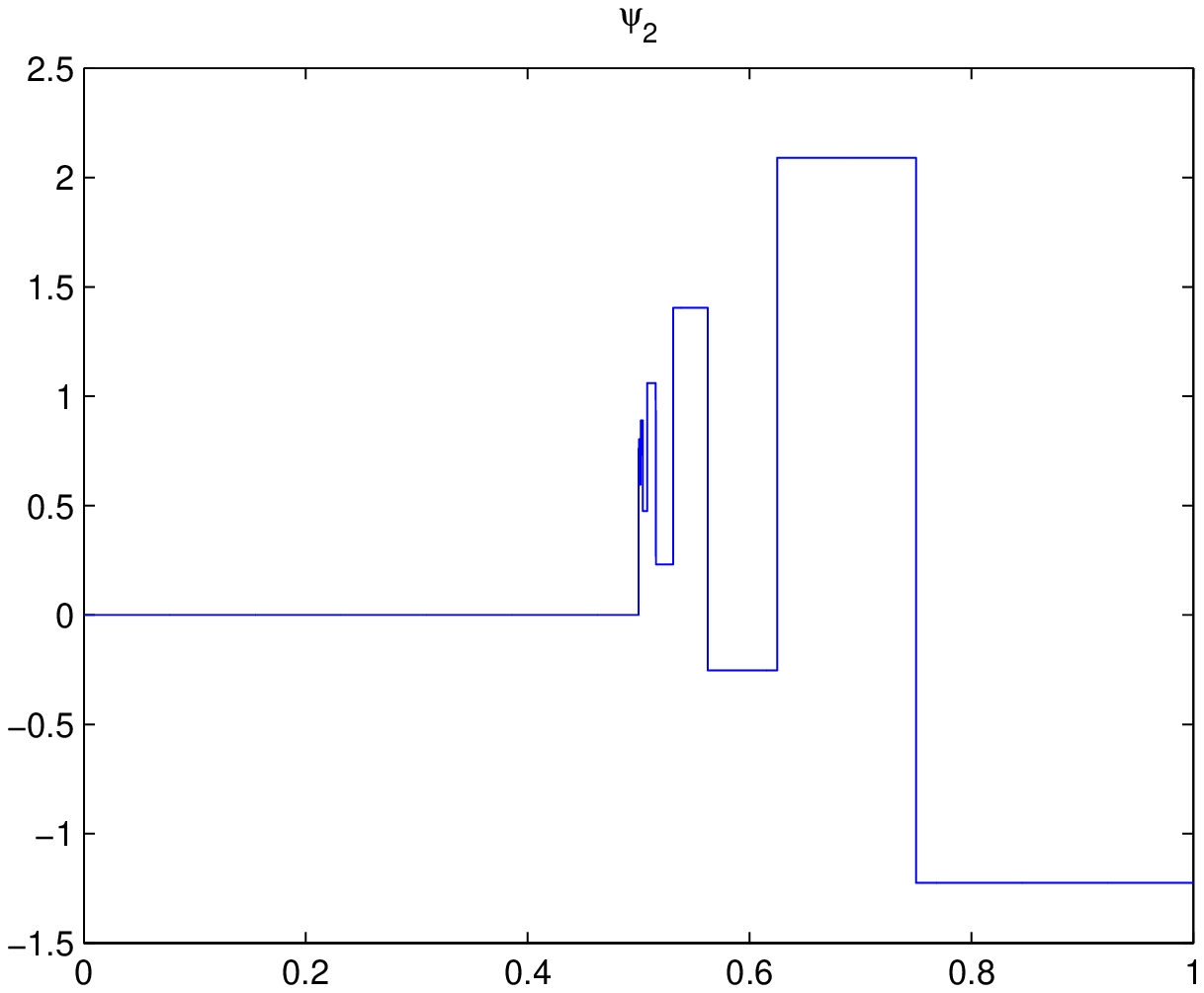} &
\includegraphics[width=0.23\linewidth]{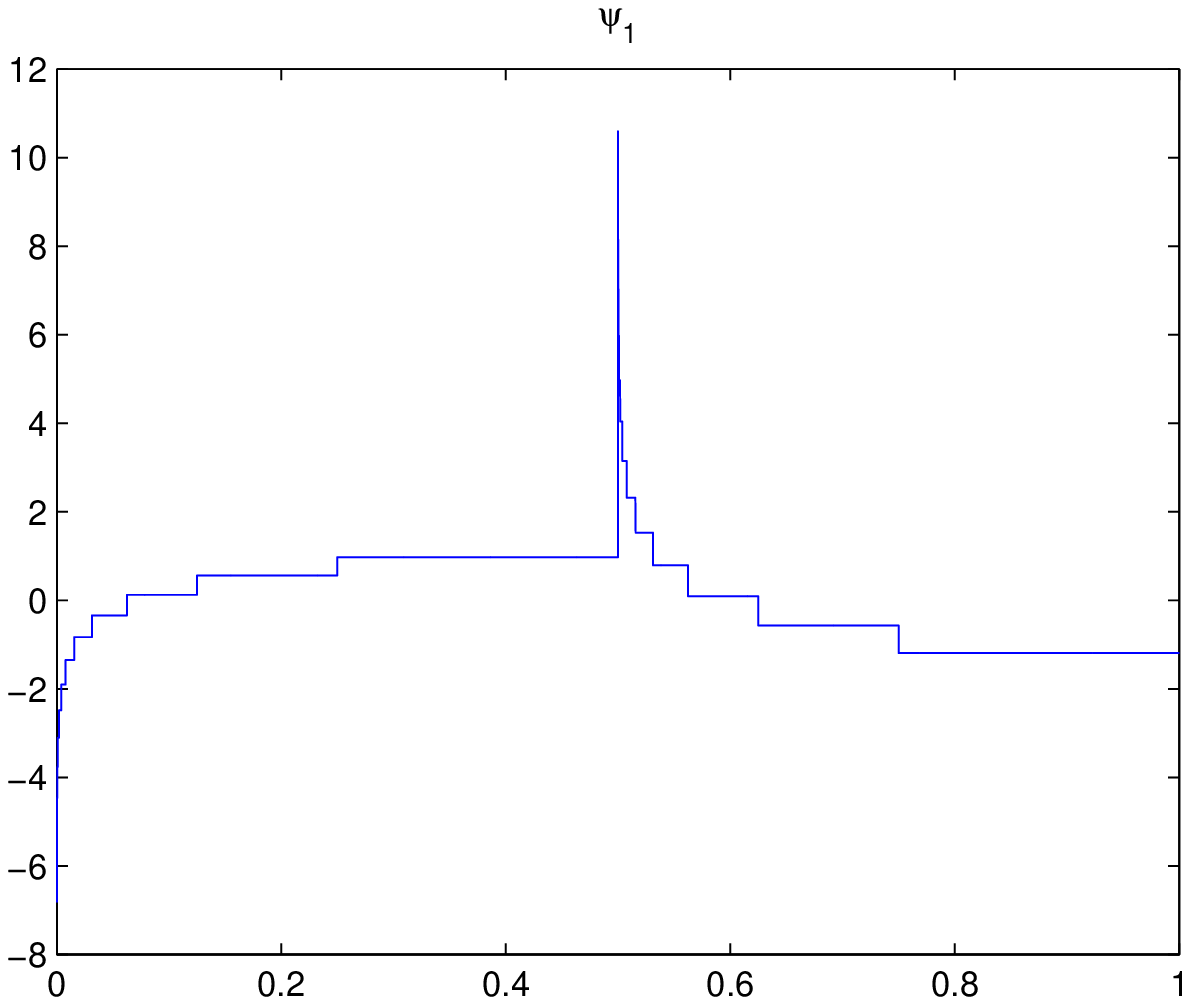} &
\includegraphics[width=0.23\linewidth]{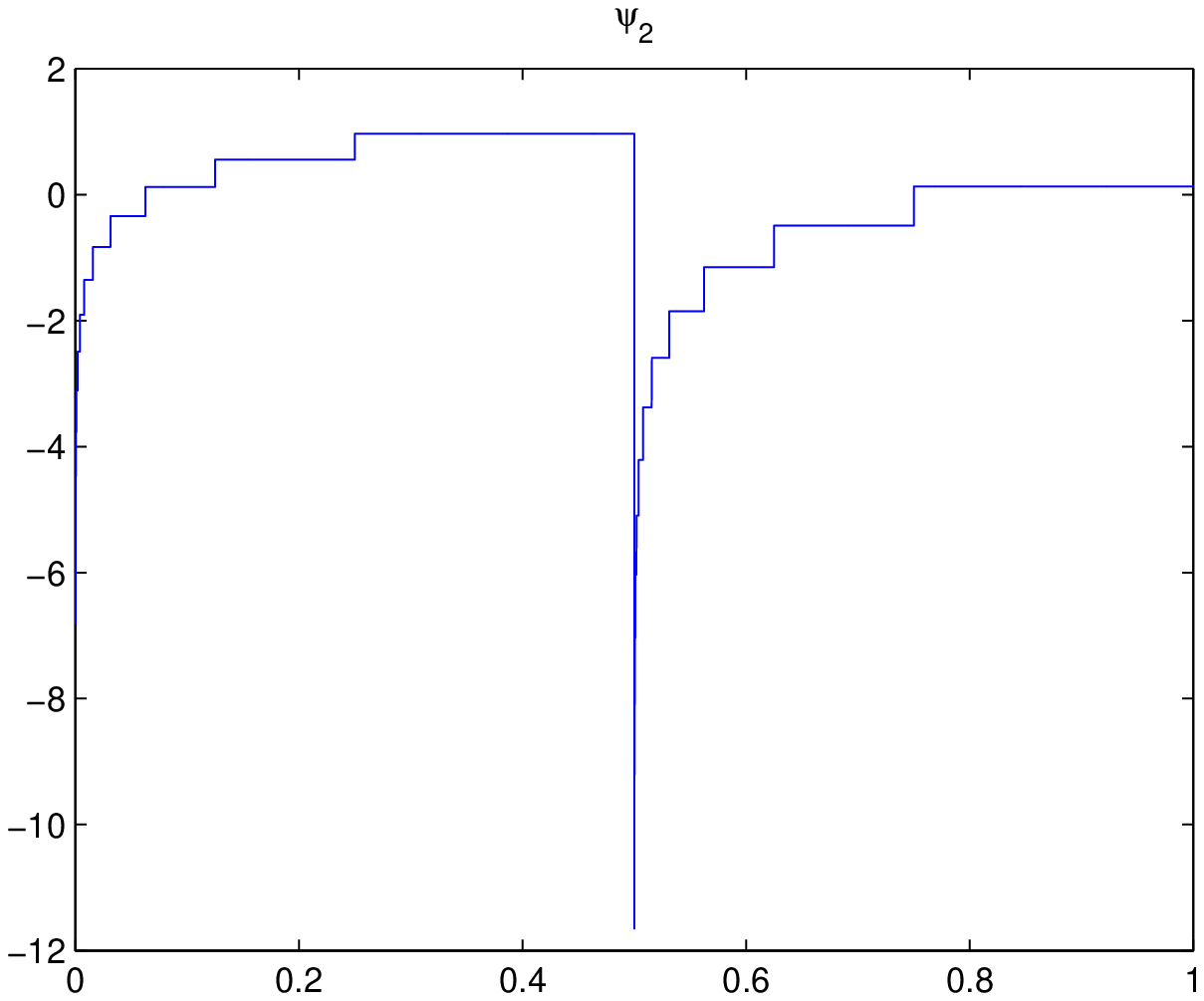} 
\\ 
\multicolumn{2}{c}{(a) Parameters: ${\mf u}_{0} = (1,0), \, {\mf u}_{1} = (0,1)$,} &
\multicolumn{2}{c}{(b) Parameters: ${\mf u}_{0} = (1,1)/\sqrt{2}, \, {\mf u}_{1} = (1,-1)/\sqrt{2}$,}
\\ 
\multicolumn{2}{c}{$\rho = 1/2$, $B_{0} = 1$, $\text{arg}(C_{1}) = 0$ and $\theta = 0$.} &
\multicolumn{2}{c}{$\rho = 3/4$, $B_{0} = 1$, $\text{arg}(C_{1}) = 0$ and $\theta = \pi$.}
\end{tabular} 
\end{center}
\caption{Case $r=2$. Examples of real functions $\psi_1$ and $\psi_2$ of type 4.  \label{fig3}}
\end{figure}

\begin{figure}
\footnotesize%\tiny
\begin{center}
\begin{tabular}{cccc}
\includegraphics[width=0.23\linewidth]{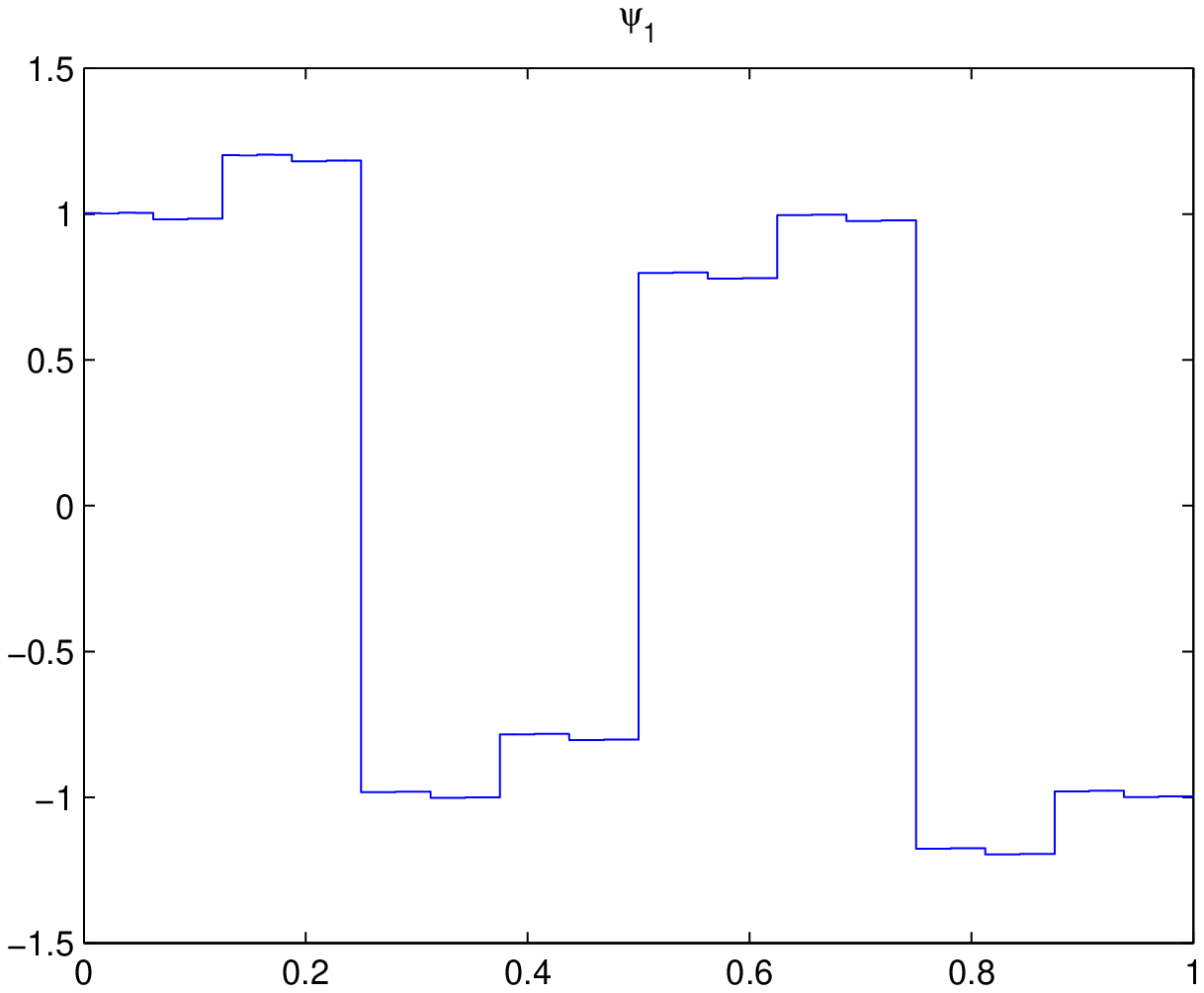} &
\includegraphics[width=0.23\linewidth]{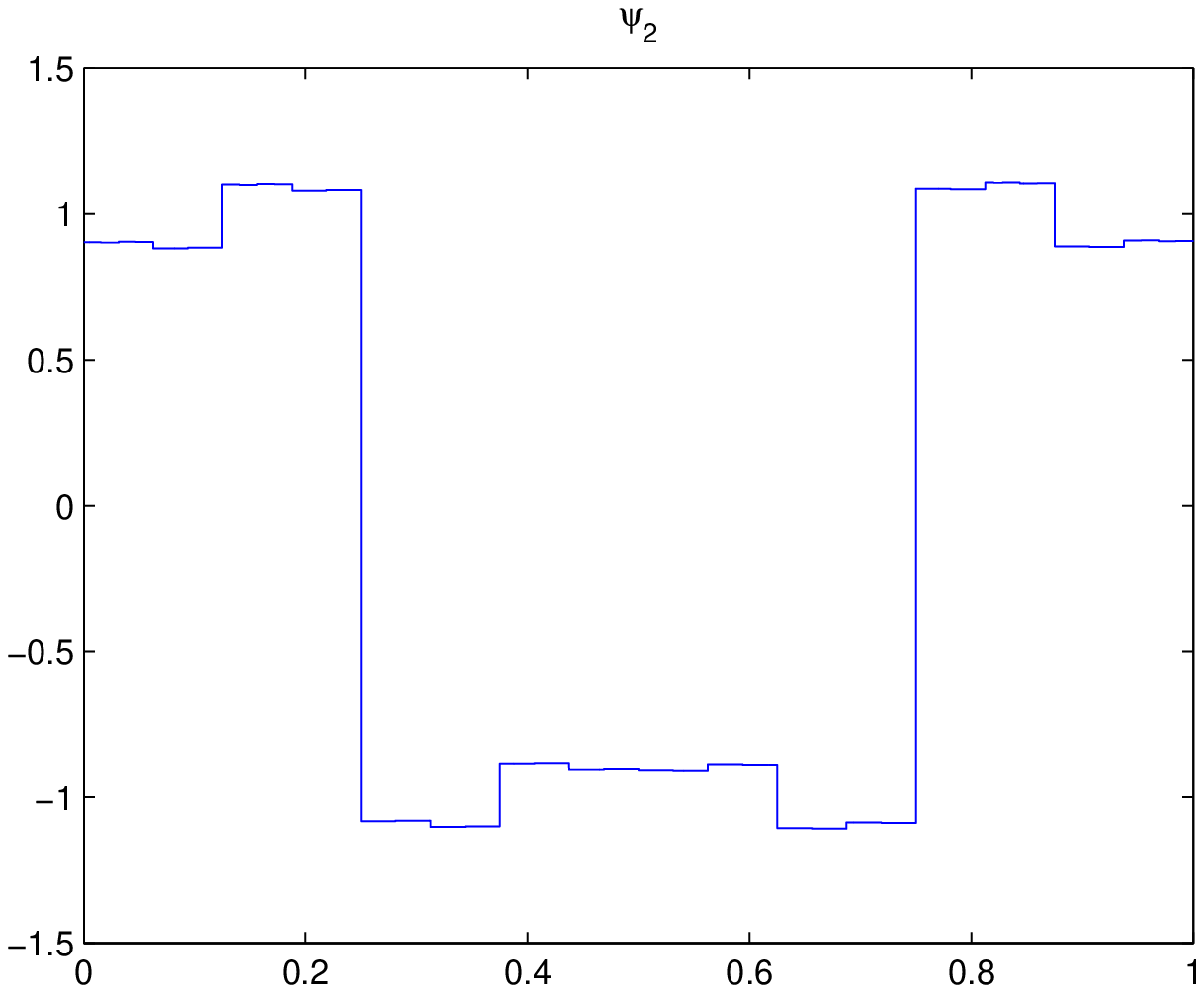} &
\includegraphics[width=0.23\linewidth]{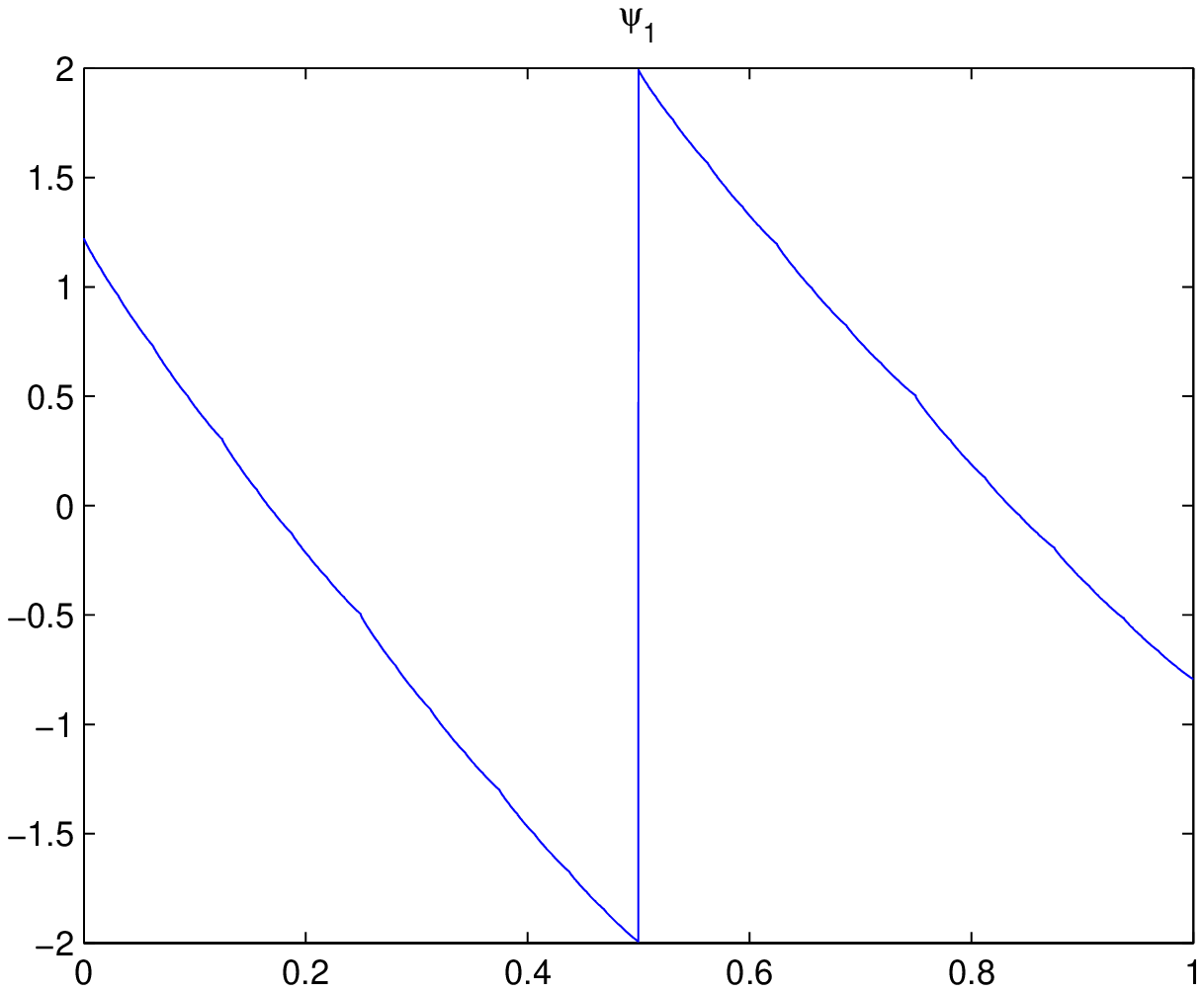} &
\includegraphics[width=0.23\linewidth]{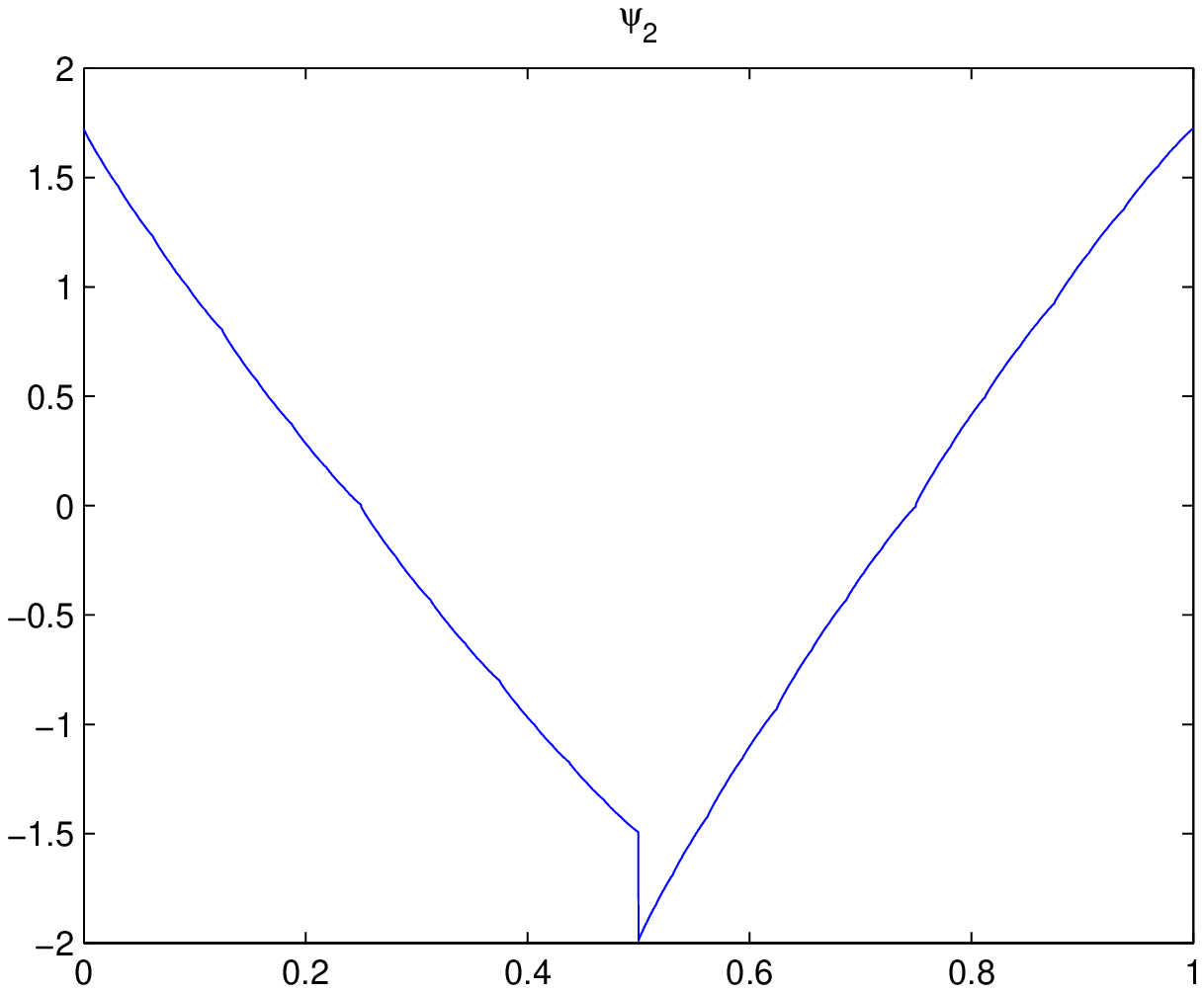} 
\\ 
\multicolumn{2}{c}{(a) Parameters: $\rho_0=0.1$, $u_{0} = (1,0)$, $v = (1,1)/\sqrt{2}$,} &
\multicolumn{2}{c}{(d) Parameters: $\rho_0=-1/2$, $u_{0} = (1,0)$, $v = (1,1)/\sqrt{2}$,}
\\ 
\multicolumn{2}{c}{$\text{arg}(\rho_{1})=\text{arg}(\tau_{0})= 0$, $B_{0} = 1$ and $\text{arg}(C_{1}) = 0$.} &
\multicolumn{2}{c}{$\text{arg}(\rho_{1})=\text{arg}(\tau_{0})= 0$, $B_{0} = 1$ and $\text{arg}(C_{1}) = 0$.}
\\[5ex]
\includegraphics[width=0.23\linewidth]{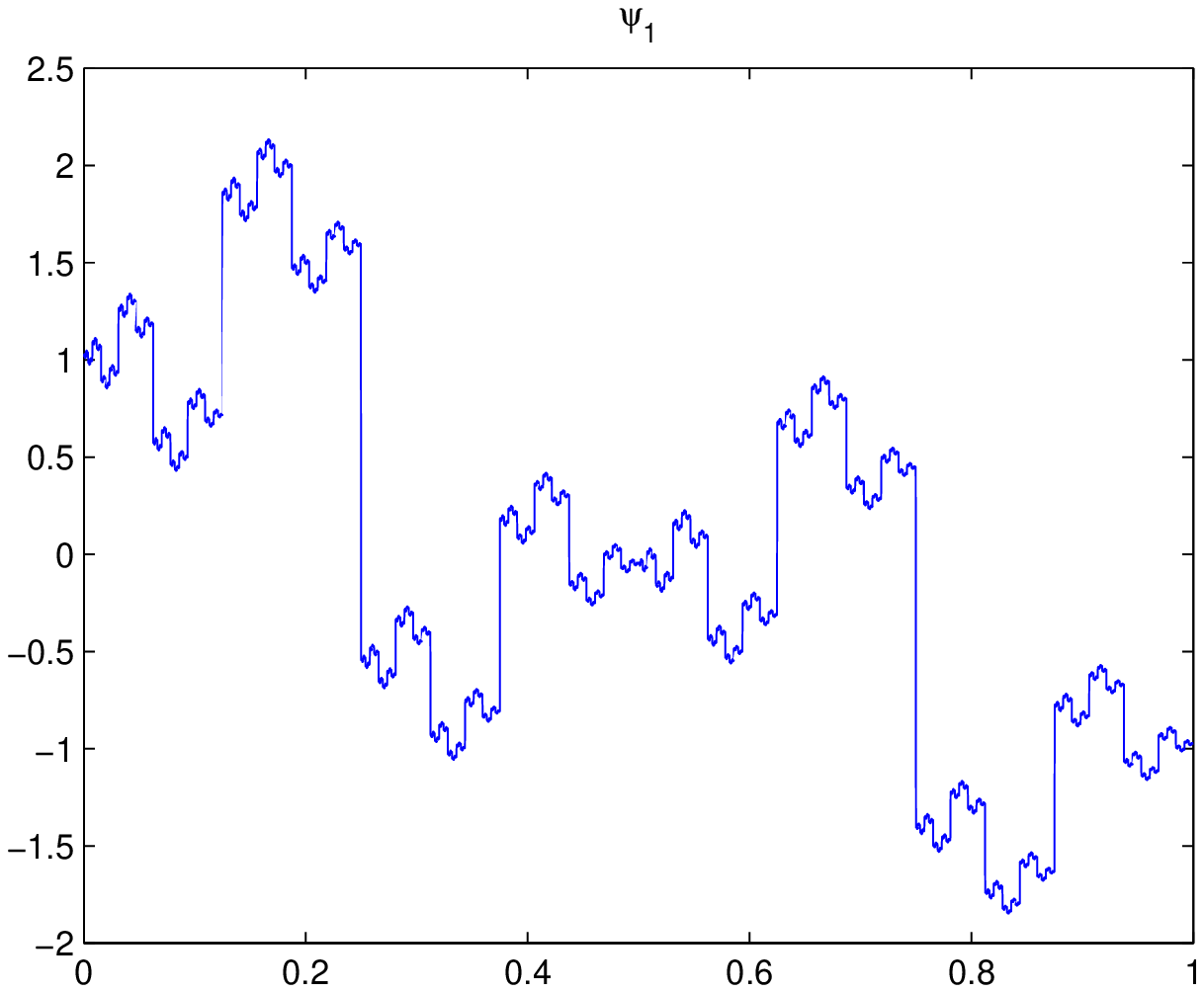} &
\includegraphics[width=0.23\linewidth]{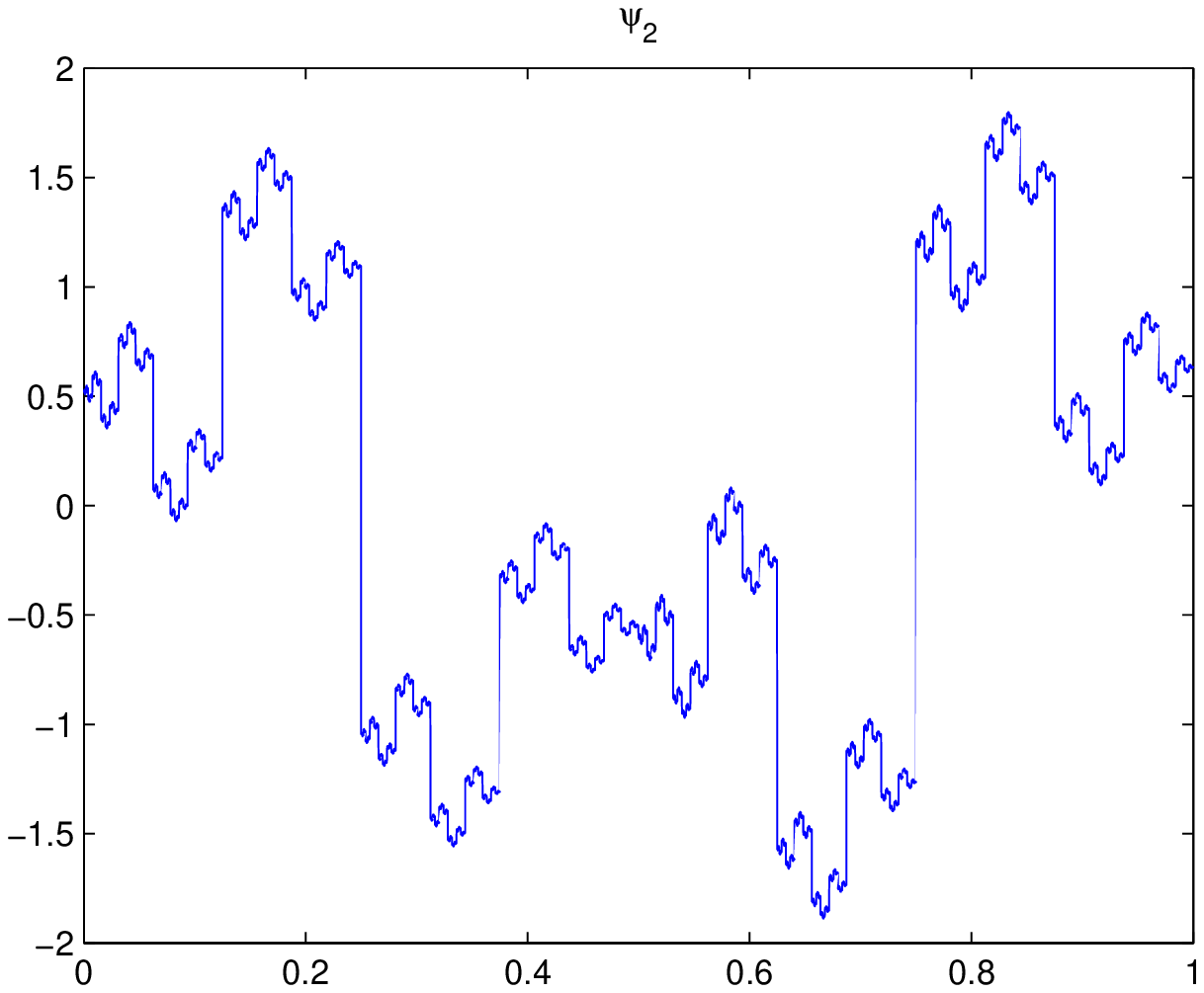} &
\includegraphics[width=0.23\linewidth]{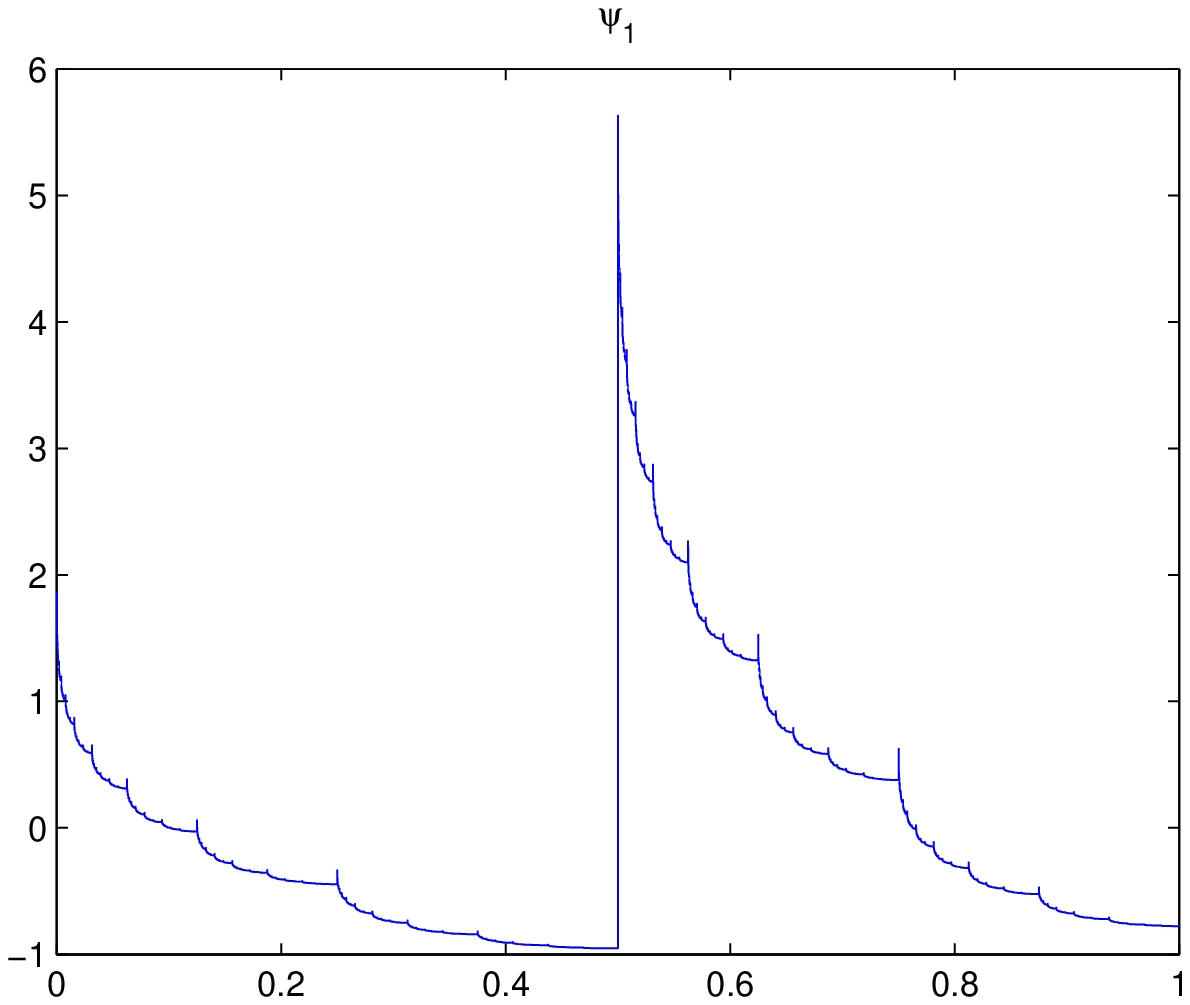} &
\includegraphics[width=0.23\linewidth]{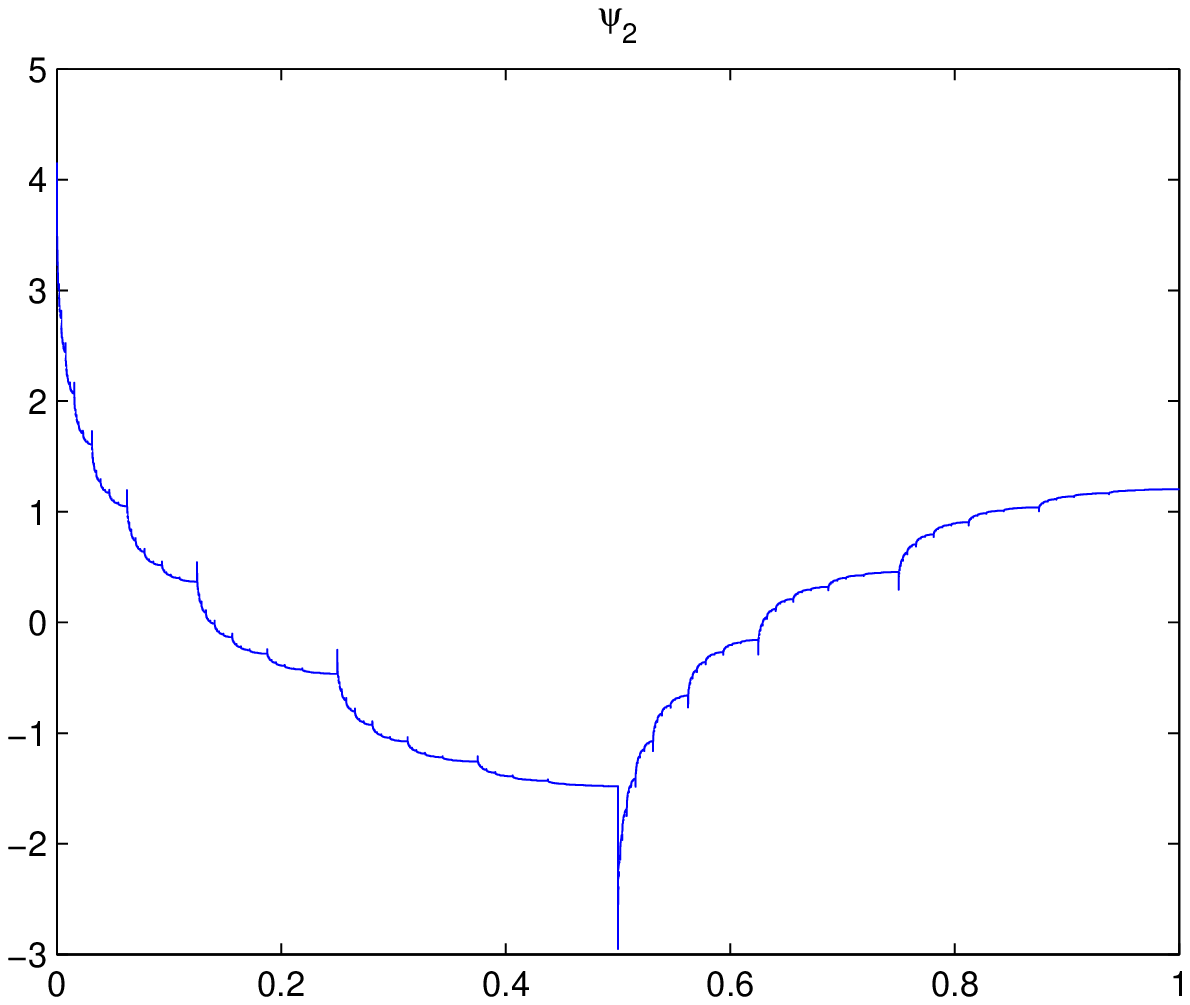} 
\\ 
\multicolumn{2}{c}{(b) Parameters: $\rho_0=1/2$, $u_{0} = (1,0)$, $v = (1,1)/\sqrt{2}$,} &
\multicolumn{2}{c}{(e) Parameters: $\rho_0=-0.6$, $u_{0} = (1,1)/\sqrt{2}$, $v = (1,2)/\sqrt{5}$,}
\\ 
\multicolumn{2}{c}{$\text{arg}(\rho_{1})=\text{arg}(\tau_{0})= 0$, $B_{0} = 1$ and $\text{arg}(C_{1}) = 0$.} &
\multicolumn{2}{c}{$\text{arg}(\rho_{1})=\text{arg}(\tau_{0})= 0$, $B_{0} = 1$ and $\text{arg}(C_{1}) = 0$.}
\\[5ex]
\includegraphics[width=0.23\linewidth]{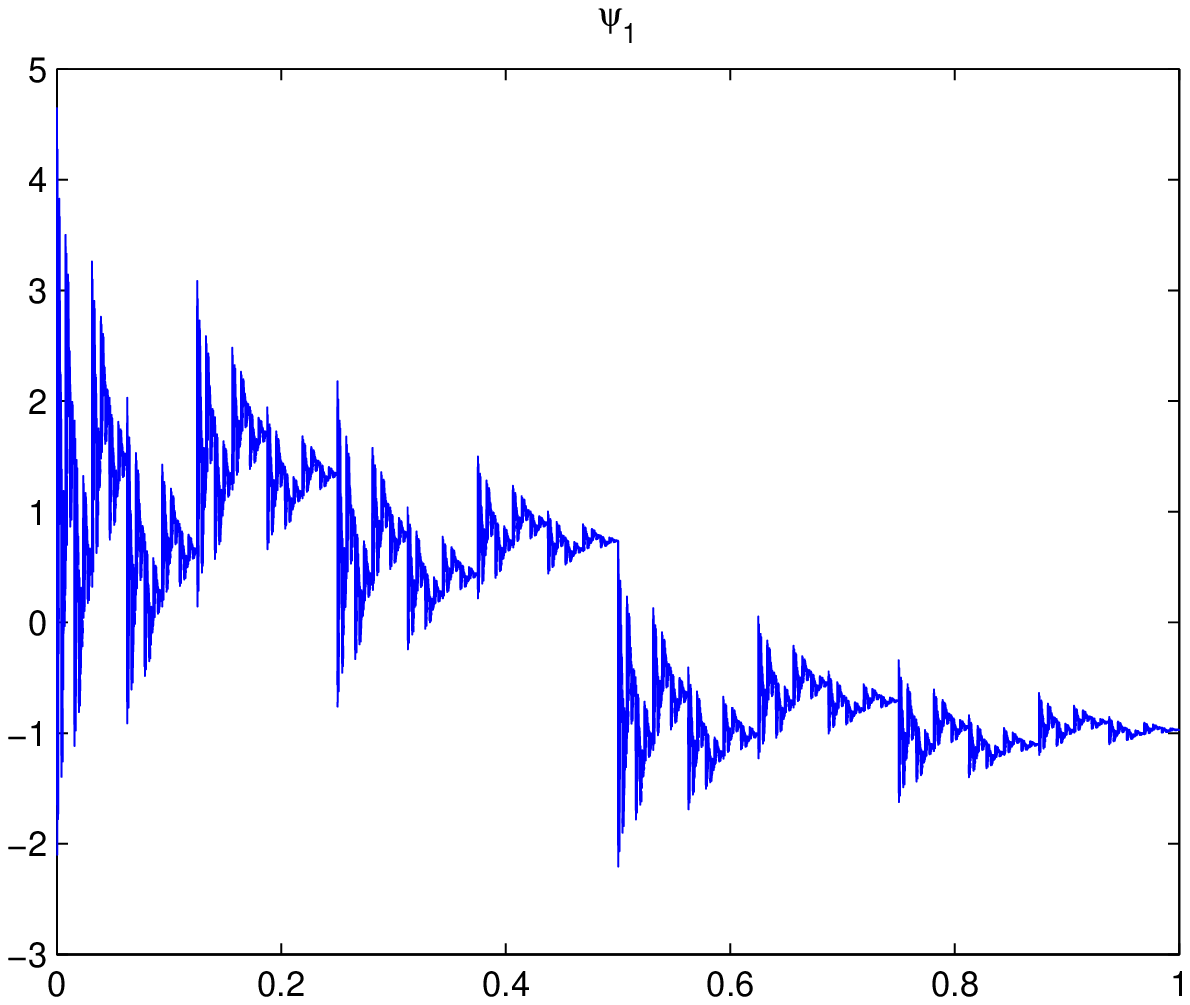} &
\includegraphics[width=0.23\linewidth]{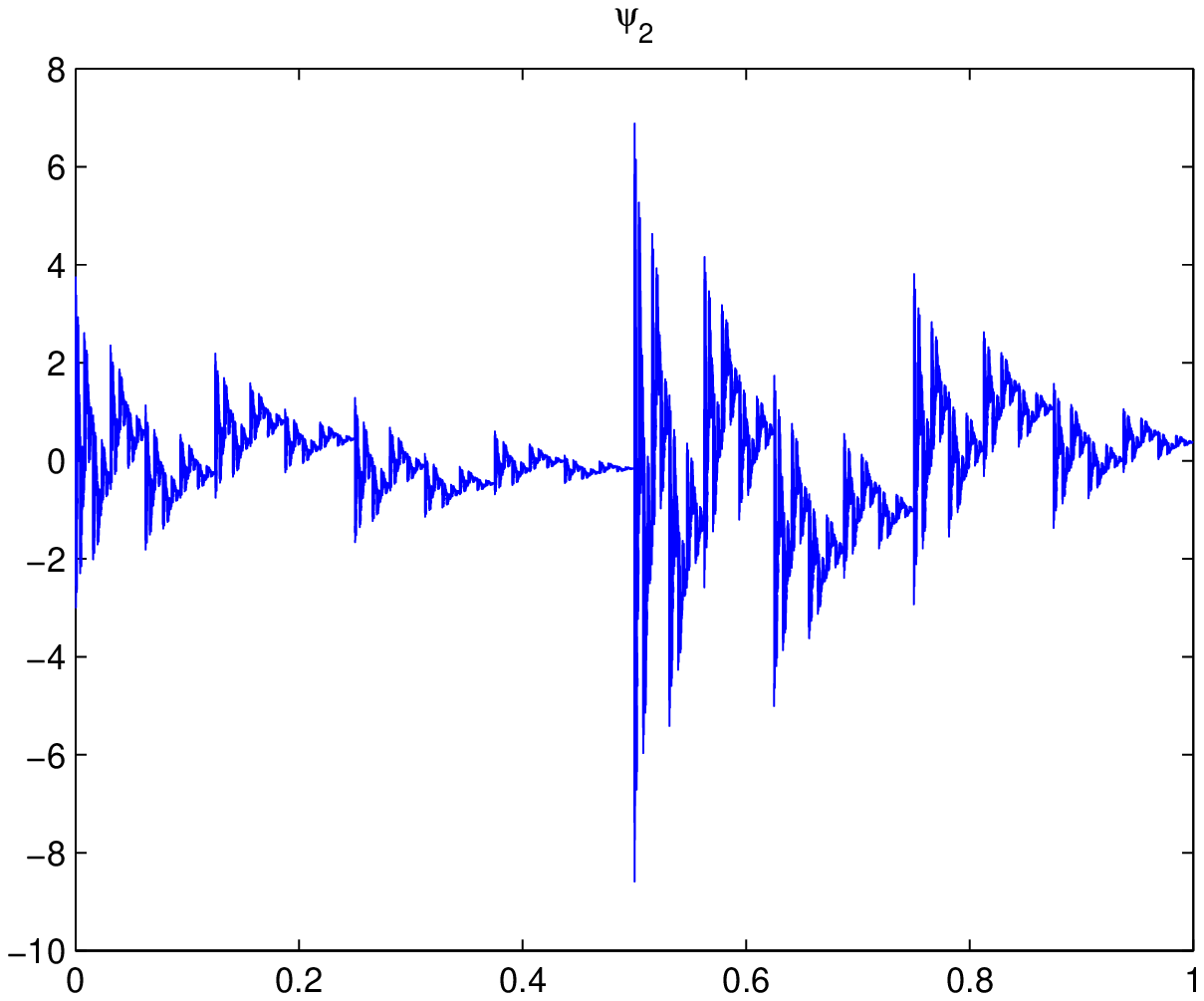} &
\includegraphics[width=0.23\linewidth]{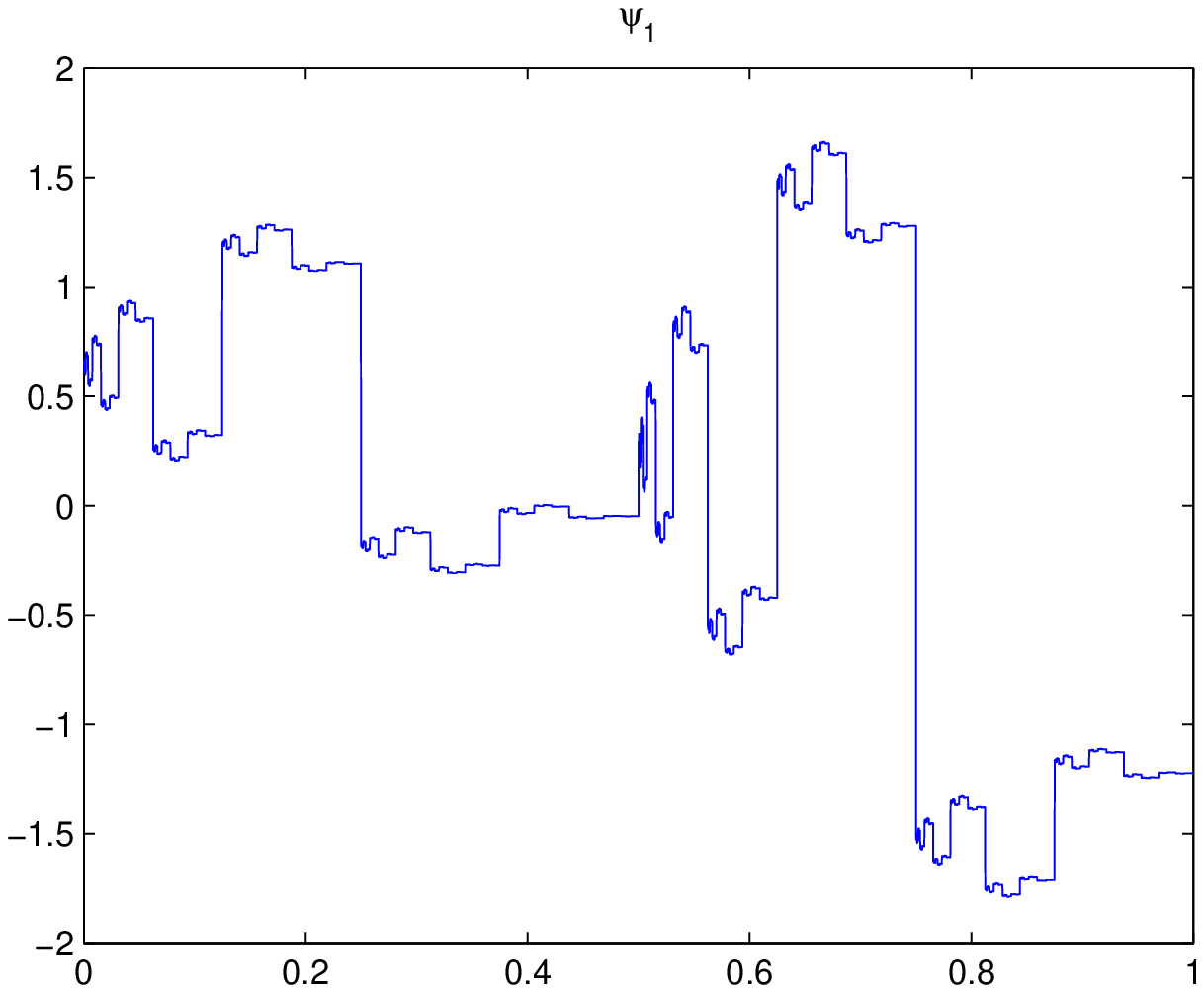} &
\includegraphics[width=0.23\linewidth]{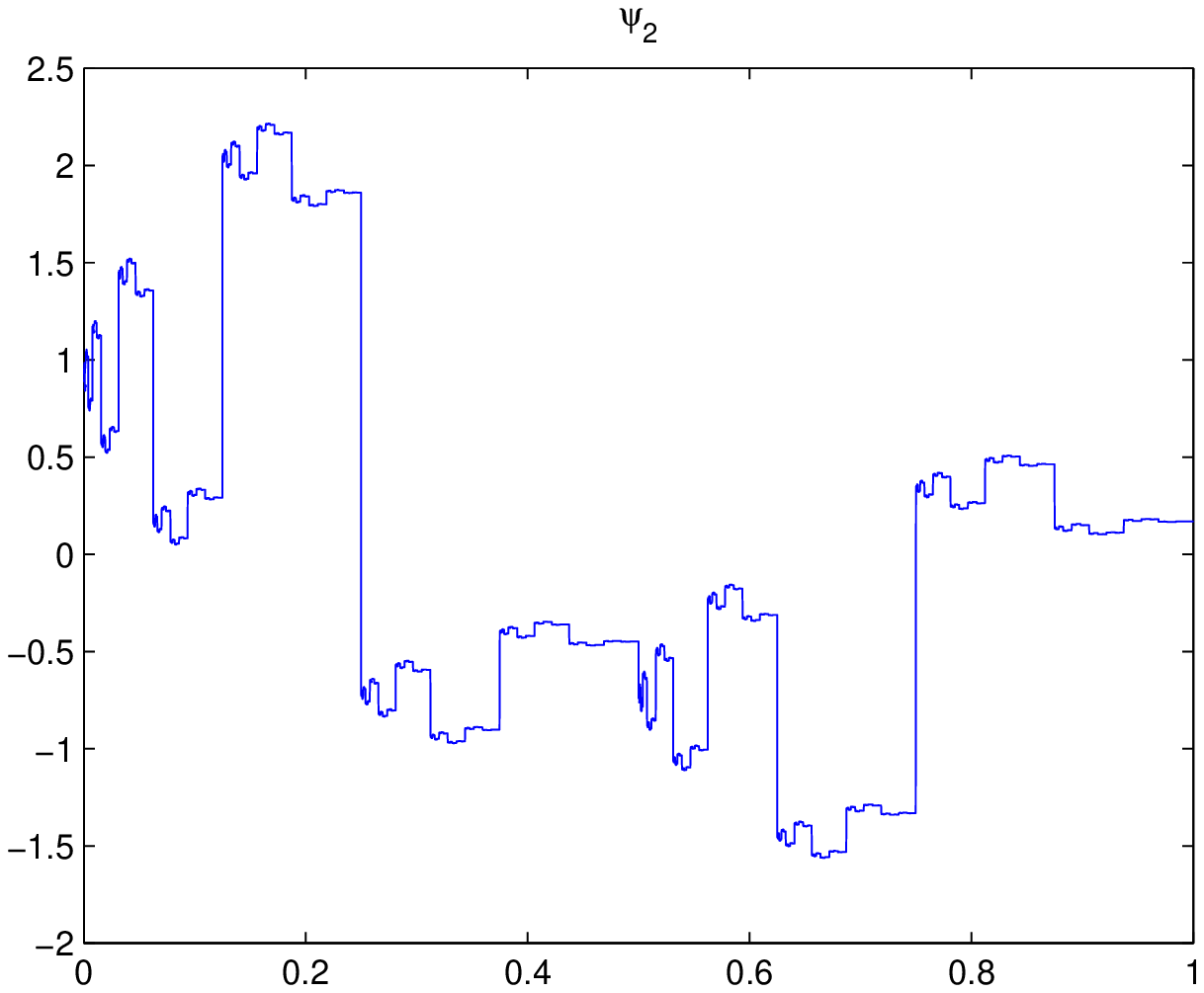} 
\\ 
\multicolumn{2}{c}{(c) Parameters: $\rho_0=0.9$, $u_{0} = (1,0)$, $v = (1,1)/\sqrt{2}$,} &
\multicolumn{2}{c}{(f) Parameters: $\rho_0=1/2$, $u_{0} = (1,1)/\sqrt{2}$, $v = (1,2)/\sqrt{5}$,}
\\ 
\multicolumn{2}{c}{$\text{arg}(\rho_{1})=\text{arg}(\tau_{0})= 0$, $B_{0} = 1$ and $\text{arg}(C_{1}) = 0$.} &
\multicolumn{2}{c}{$\text{arg}(\rho_{1})=\text{arg}(\tau_{0})= 0$, $B_{0} = 1$ and $\text{arg}(C_{1}) = 0$.}
\end{tabular} 
\end{center}
\caption{Case $r=2$. Examples of real functions $\psi_1$ and $\psi_2$ of type 5.  \label{fig4}}
\end{figure}

%-----------

%--------------------------
\section{Preliminaries}\label{sect2}

The spectral techniques for univariate wavelet frames developed in \cite{GV19-1} are based on suitable spectral representations of the translation and dilation operators $T$ and $D$ given in (\ref{tdo}). These representations are built in terms of an orthonormal basis (shortly, ONB) $\{L_{i}^{(0)}(x)\}_{i\in\I}$  of $L^2[0,1)$ and ONBs $\{K_{\pm,j}^{(0)}(x)\}_{j\in\J}$ of $L^2[\pm 1,\pm 2)$, where $\I$, $\J$ are denumerable sets of indices (usually, $\N$, $\N\cup\{0\}$ or $\Z$). Obviously, the families
\be\label{lt}
\big\{L_{i}^{(n)}(x):=[T^n L_{i}^{(0)}](x)=L_{i}^{(0)}(x-n)\big\}_{i\in\I,n\in\Z},
\ee
\be\label{kd}
\big\{K_{s,j}^{(m)}(x):=[D^m K_{s,j}^{(0)}](x)=2^{m/2}K_{s,j}^{(0)}(2^mx)\big\}_{j\in\J,m\in\Z,s=\pm}
\ee
are ONBs of $L^2(\R)$ and, for each $f\in L^2(\R)$, one has (in $L^2$-sense)
\be\label{fdobl2}
f=\sum_{i,n} \hat f^{(n)}_{i} L_i^{(n)},\text{ with }\hat f^{(n)}_{i}:=\<f,L_i^{(n)}\>_{L^2(\R)}\,,
\ee
\be\label{fdobl1}
f=\sum_{s,j,m} \tilde f^{(m)}_{s,j} K_{s,j}^{(m)},\text{ with }\tilde f^{(m)}_{s,j}:=\<f,K_{s,j}^{(m)}\>_{L^2(\R)}\,.
\ee
The change of representation between both expansions (\ref{fdobl2}) and (\ref{fdobl1}) is governed by a matrix $\big(\alpha_{i,n}^{s,j,m}\big)$, where
\be\label{chm}
\alpha_{i,n}^{s,j,m}:=\< L_i^{(n)},K_{s,j}^{(m)}\>_{L^2(\R)}\,.
\ee
In what follows, fixed ONBs $\{L_{i}^{(n)}(x)\}_{i\in\I,n\in\Z}$ and  $\{K_{s,j}^{(m)}(x)\}_{j\in\J,m\in\Z,s=\pm}$ of $L^2(\R)$ as above, for each $f\in L^2(\R)$ we shall write
$$
f=\big\{\hat f_{i}^{(n)}\big\}=\big\{\tilde f_{s,j}^{(m)}\big\}
$$
and we shall also use the notation
$$
\hat{f}_{i}(\om):=\sum_{n\in\Z} \om^{n}\,\hat f_{i}^{(n)}\quad \text{and}\quad
\tilde{f}_{s,j}(\om):=\sum_{m\in\Z} \om^{m}\,\tilde f_{s,j}^{(m)}\,,\quad (f\in L^2(\R);i\in\I;s=\pm,j\in\J)\,.
$$

Corollary 3.6 in \cite{GV19-1} gives a useful description of tight wavelet frames for $L^2(\R)$: 

\medskip\noindent
{\it 
A wavelet system $X_\Psi$ of the form (\ref{ws}) and such that 
\be\label{aePsi}
\sup_{\psi\in\Psi} ||\psi||_{L^2(\R)}=M<\infty
\ee 
is a {tight frame} for $L^2(\R)$, with frame bound $B$, if and only if
\be\label{sesg}
\sum_{\stackrel{i,n}{i',n'}}
\big({\sum_{k,j\in\Z}}^u\, \overline{\alpha_{i,n+j}^{s,l,k}}\,\alpha_{i',n'+j}^{s',l',k+\sigma}\big)\big({\sum_{\psi\in\Psi}}^u\, \overline{\hat\psi_{i}^{(n)}}\,\hat\psi_{i'}^{(n')}\big)=
B\,\delta_{s,s'}\delta_{l-l'}\delta_\sigma\,,\quad
(s,s'=\pm,\,l,l'\in\J,\,\sigma\in\Z)\,.
\ee
}
%\medskip

\noindent
Here, $\delta$ denotes the Dirac $\delta$-function, the superindex 'u' added to the sum symbols $\sum$ reflects the unconditional convergence of the series (see \cite[Lemma 3.4]{GV19-1} and comments that follow it), and the components $\big\{\hat\psi_i^{(n)}\big\}$ of each $\psi\in\Psi$ and the $\alpha_{i,n}^{s,j,m}$'s are related with the spectral representations of section \ref{sect2} (see equations (\ref{fdobl2}) and  (\ref{chm})).

\section{Tight wavelet frames of minimal support}\label{sect4}

In this section, Corollary 3.6 in \cite{GV19-1} is used  to determine all the tight wavelet frames for $L^2(\R)$ of the form (\ref{ws}), with cardinal of $\Psi$ finite
and such that the support of each $\psi\in\Psi$, $\text{supp}\,\psi$, is included in the interval $[0,1]$.
Note that, since the cardinal of $\Psi$ is finite, condition (\ref{aePsi}) is trivially satisfied.

Due to the structure of the ONB $\big\{L_i^{(j)}\big\}$, defined by (\ref{lt}),
the fact that $\text{supp}\,\psi\subseteq [0,1]$, ($\psi\in\Psi$), implies that their expansions (\ref{fdobl2}) read
$$
\psi=\sum_{i\in\I} \hat\psi_i^{(0)}L_i^{(0)}\,,\quad  (\psi\in\Psi)\,,
$$
since $\hat\psi_i^{(n)}=0$ for $n\neq 0$.
Thus,  for non-zero summands in the left hand side of (\ref{sesg}) it must be $n=n'=0$, so that 
\be\label{uutf}
\sum_{\stackrel{i,n}{i',n'}}
\big({\sum_{k,j\in\Z}}^u\, \overline{\alpha_{i,n+j}^{s,l,k}}\,\alpha_{i',n'+j}^{s',l',k+\sigma}\big)\big({\sum_{\psi\in\Psi}}^u\, \overline{\hat\psi_{i}^{(n)}}\,\hat\psi_{i'}^{(n')}\big)=
\sum_{{i,i'}}
\big({\sum_{k,j\in\Z}}^u\, \overline{\alpha_{i,j}^{s,l,k}}\,\alpha_{i',j}^{s',l',k+\sigma}\big)\big({\sum_{\psi\in\Psi}}^u\, \overline{\hat\psi_{i}^{(0)}}\,\hat\psi_{i'}^{(0)}\big)\,.
\ee
Now, being finite the cardinal of $\Psi$, according to \cite[Lemma 3.4]{GV19-1} and the comments that follow it, the unconditional sums in (\ref{uutf}) may be calculated, for example, in the following way:
$$
\sum_{{i,i'}}
\big({\sum_{k,j\in\Z}}^u\, \overline{\alpha_{i,j}^{s,l,k}}\,\alpha_{i',j}^{s',l',k+\sigma}\big)\big({\sum_{\psi\in\Psi}}^u\, \overline{\hat\psi_{i}^{(0)}}\,\hat\psi_{i'}^{(0)}\big)=
\lim_{a\to\infty}\sum_{i,i'}\big[\sum_{k=-a}^a\sum_{j=-2^a}^{2^a}\overline{\alpha_{i,j}^{s,l,k}} \alpha_{i',j}^{s',l',k+\sigma}\big]
\big[\sum_{\psi\in\Psi} \overline{\hat\psi_{i}^{(0)}}\hat\psi_{i'}^{(0)}\big]\,.
$$

Then, in this particular case, Corollary 3.6 in \cite{GV19-1} can be rewritten as follows:

\begin{prop}\label{pp11}
Let $X$ be a wavelet system in $L^2(\R)$ of the form (\ref{ws}), where $\Psi$ has finite cardinal and $\text{supp}\,\psi\subseteq [0,1]$ for every $\psi\in\Psi$. 
Then, $X$ is a {tight frame} for $L^2(\R)$, with frame bound $B$, if and only if
\be\label{ses1}
\lim_{a\to\infty}\sum_{i,i'\in\I}\big[\sum_{k=-a}^a\sum_{j=-2^a}^{2^a}\overline{\alpha_{i,j}^{s,l,k}} \alpha_{i',j}^{s',l',k+\sigma}\big]
\big[\sum_{\psi\in\Psi} \overline{\hat\psi_{i}^{(0)}}\hat\psi_{i'}^{(0)}\big]
=B\,\delta_{s,s'}\delta_{l-l'}\delta_\sigma,\quad
(s,s'=\pm,\, l,l'\in\J,\,\sigma\in\Z).
\ee
\end{prop}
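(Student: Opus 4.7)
The plan is to derive Proposition \ref{pp11} as a direct specialization of Corollary 3.6 in \cite{GV19-1} under the two hypotheses, namely that $\Psi$ is finite and that each $\psi\in\Psi$ satisfies $\text{supp}\,\psi\subseteq[0,1]$. First I would verify that hypothesis (\ref{aePsi}) is automatic: when $\Psi$ is finite the supremum $M=\sup_{\psi\in\Psi}\|\psi\|_{L^2(\R)}$ is a maximum over finitely many numbers, hence finite. Thus Corollary 3.6 applies, and the tight frame property is equivalent to (\ref{sesg}) for all admissible $s,s',l,l',\sigma$.

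Next I would exploit the support condition to collapse the outer sum over $(n,n')$ to the diagonal slice $n=n'=0$. From the definition (\ref{lt}), $L_i^{(n)}(x)=L_i^{(0)}(x-n)$, so $L_i^{(n)}$ is supported on $[n,n+1)$. Since $\text{supp}\,\psi\subseteq[0,1]$, the inner product $\hat\psi_i^{(n)}=\langle\psi,L_i^{(n)}\rangle_{L^2(\R)}$ vanishes whenever $n\neq 0$. Consequently the factor $\overline{\hat\psi_i^{(n)}}\,\hat\psi_{i'}^{(n')}$ in (\ref{sesg}) is zero unless $n=n'=0$, so (\ref{sesg}) reduces at once to (\ref{uutf}), in which the outer sum is only over $i,i'\in\I$.

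Then I would convert the unconditional sum over $(k,j)\in\Z^2$ into the displayed double limit. By \cite[Lemma 3.4]{GV19-1} and the subsequent comments, an unconditional series of this type may be evaluated as the limit along \emph{any} exhausting sequence of finite subsets of $\Z^2$; the rectangular exhaustion $k\in[-a,a]$, $j\in[-2^a,2^a]$ is one such choice. Because $\Psi$ has finite cardinality, the finite sum $\sum_{\psi\in\Psi}\overline{\hat\psi_i^{(0)}}\,\hat\psi_{i'}^{(0)}$ passes through the limit without issue, and the Cauchy--Schwarz bound for the bilinear form in brackets against the $\ell^2$-summable sequences $\{\hat\psi_i^{(0)}\}_{i\in\I}$ allows the outer sum over $i,i'$ to be exchanged with the limit in $a$. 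The converse implication is immediate, since any identity of the reduced form (\ref{ses1}) re-assembles into (\ref{sesg}) by reinserting the vanishing entries.

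The step I expect to be the main obstacle is the bookkeeping in the third paragraph: verifying that the rectangular exhaustion $[-a,a]\times[-2^a,2^a]$ is indeed admissible in the sense of \cite[Lemma 3.4]{GV19-1}, and tracking that the unconditional convergence survives the factorization into the two bracketed pieces in (\ref{ses1}). Once that is granted, the remaining content of the proof is simply the two reductions (the support-driven collapse to $n=n'=0$ and the choice of a specific exhaustion), neither of which requires additional calculation beyond what is already stated in the excerpt.
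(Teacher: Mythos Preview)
Your proposal is correct and follows essentially the same route as the paper: the paper's argument (given in the paragraphs immediately preceding the proposition rather than in a separate proof block) consists precisely of observing that finiteness of $\Psi$ makes (\ref{aePsi}) trivial, that the support condition forces $\hat\psi_i^{(n)}=0$ for $n\neq 0$ so that (\ref{sesg}) collapses to (\ref{uutf}), and then invoking \cite[Lemma 3.4]{GV19-1} to evaluate the unconditional sum via the rectangular exhaustion $[-a,a]\times[-2^a,2^a]$. Your discussion is somewhat more explicit about the interchange of the outer $i,i'$ sum with the limit in $a$, which the paper glosses over, but there is no genuine difference in method.
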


From now on, we consider the Haar orthonormal bases $\big\{L_i^{(j)}\big\}$ and $\big\{K_{s,l}^{(k)}\big\}$ and the corresponding matrix $\big(\alpha_{i,j}^{s,l,k}\big)$ given in the appendix. This choice leads to the following result, which we write in vectorial form:

\begin{prop}\label{prophv}
Let $\big\{L_i^{(0)}\big\}_{i\in\N\cup\{0\}}$ be the Haar orthonormal basis  of $L^2[0,1]$ given in (\ref{hbb0}).
Let $r\in\N$ and 
$$
\Psi=\{\psi_1,\psi_2,\ldots,\psi_r\}\subset L^2[0,1]\,,
$$ 
where $\psi_j=\sum_i [\hat\psi_j]_i^{(0)}L_i^{(0)}$, ($j=1,\ldots,r$). Let us put
$$
\Psi_i:=\left(\begin{array}{c} [\hat\psi_1]_i^{(0)} \cr [\hat\psi_2]_i^{(0)}\cr 
\vdots \cr [\hat\psi_r]_i^{(0)}\end{array}\right)\in\C^r,\quad (i\in\N\cup\{0\}).
$$
Then, the wavelet system $X$ of the form (\ref{ws}) generated by $\Psi$
is a tight frame for $L^2(\R)$, with frame bound $B$, if and only if the following conditions are satisfied:
\begin{enumerate}
\item
$\Psi_0=(0,0,\ldots,0)\in\C^r$.
\item
\be\label{v00z}
\sum_{k=\sup\{1,1-\sigma\}}^\infty \<\Psi_{2^{k-1+\sigma}},\Psi_{2^{k-1}}\>_{\C^r}=B\,\delta_\sigma,\quad (\sigma\in\Z).
\ee
\item
For $l\geq 1$, $l=2^p+\sum_{t=0}^{p-1}l_t2^t$, ($p\geq0$),
\be\label{v++0}
\sum_{k=-p}^{0} ||\Psi_{2^{p+k}+\sum_{t=0}^{p+k-1}l_t2^t}||^2_{\C^r}+
\sum_{k=1}^\infty ||\Psi_{2^{p+k}+l}||^2_{\C^r}=B,
\ee
\be\label{v+++}
\sum_{k=\sup\{1,1-\sigma\}}^\infty \<\Psi_{2^{p+k+\sigma}+l},\Psi_{2^{p+k}+l}\>_{\C^r}=0,\quad (\sigma\in\Z\backslash\{0\}),
\ee
\be\label{v0+z}
\sum_{k=\sup\{1,1-\sigma\}}^\infty \<\Psi_{2^{p+k+\sigma}+l},\Psi_{2^{k-1}}\>_{\C^r}=0,\quad (\sigma\in\Z).
\ee
\item
For $l,l'\geq 1$, $l\neq l'$, $l=2^p+\sum_{t=0}^{p-1}l_t2^t$, $l'=2^{p'}+\sum_{t=0}^{p'-1}l'_t2^t$, ($p,p'\geq 0$),  
\be\label{v+(+)0}
\begin{array}{c}
\ds\sum_{k=\sup\{-p,-p'\}}^{0} \delta_{(\sum_{t=0}^{-k}l_{p+k+t}2^t)-(\sum_{t=0}^{-k}l'_{p'+k+t}2^t)}\,\<\Psi_{2^{p'+k}+\sum_{t=0}^{p'+k-1}l'_t2^t},\Psi_{2^{p+k}+\sum_{t=0}^{p+k-1}l_t2^t}\>_{\C^r}+
\\
\ds +\sum_{k=1}^\infty \<\Psi_{2^{p'+k}+l'},\Psi_{2^{p+k}+l}\>_{\C^r}=0,
\end{array}
\ee
\be\label{v+(+)z}
\sum_{k=\sup\{1,1-\sigma\}}^\infty \<\Psi_{2^{p'+k+\sigma}+l'},\Psi_{2^{p+k}+l}\>_{\C^r}=0,\quad (\sigma\in\Z\backslash\{0\}).
\ee
\end{enumerate}
\end{prop}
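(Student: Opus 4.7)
The plan is to specialize Proposition \ref{pp11} to the Haar ONBs $\{L_i^{(j)}\}$ and $\{K_{s,l}^{(k)}\}$ of the appendix. Since the cardinal of $\Psi$ is finite, the factor on the right in (\ref{ses1}) can be rewritten via the identity
$$
\sum_{\psi\in\Psi}\overline{\hat\psi_i^{(0)}}\,\hat\psi_{i'}^{(0)}=\<\Psi_{i'},\Psi_i\>_{\C^r},
$$
so the whole condition (\ref{ses1}) becomes a linear statement about inner products of the vectors $\Psi_i$. Everything else is then a careful evaluation of the limit
$$
\Lambda^{s,s',l,l',\sigma}_{i,i'}:=\lim_{a\to\infty}\sum_{k=-a}^{a}\sum_{j=-2^a}^{2^a}\overline{\alpha_{i,j}^{s,l,k}}\,\alpha_{i',j}^{s',l',k+\sigma}
$$
using the explicit Haar coefficients from the appendix.

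I would first recall the structure of the Haar indexing: $L_0^{(0)}=\chi_{[0,1)}$ and for $i\geq 1$ with binary expansion $i=2^p+\sum_{t=0}^{p-1}l_t2^t$, the function $L_i^{(0)}$ is the Haar wavelet at scale $p$ and dyadic shift encoded by $(l_0,\ldots,l_{p-1})$; the $K$-basis is parametrized analogously, with the parameter $l$ on the $K$-side mirroring the scale/shift information. The overlaps $\alpha_{i,j}^{s,l,k}=\<L_i^{(j)},K_{s,l}^{(k)}\>$ then vanish except when the dyadic supports nest properly, and lifting $k\mapsto k+\sigma$ corresponds to a scale shift. The case $s\neq s'$ is trivial by support disjointness, so the substantive analysis is for $s=s'$.

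Next I would split the resulting identities according to the four mutually exclusive cases $(l=l'=0)$, $(l=0,\,l'\geq 1)$ or its transpose, $(l=l'\geq 1)$, and $(l,l'\geq 1,\,l\neq l')$, and within each case treat $\sigma=0$ and $\sigma\neq 0$ separately. The cases match bijectively with equations (\ref{v00z})--(\ref{v+(+)z}): for $l=l'=0$ I recover (\ref{v00z}); the mixed case $(0,l')$ gives (\ref{v0+z}); the diagonal $l=l'\geq 1$ produces the normalisation (\ref{v++0}) when $\sigma=0$ and the off-diagonal (\ref{v+++}) when $\sigma\neq 0$; and the fully off-diagonal case gives (\ref{v+(+)0}) and (\ref{v+(+)z}). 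Condition 1, $\Psi_0=0$, comes out as follows: $L_0^{(0)}=\chi_{[0,1)}$ is a scaling-type vector with nonzero mean, whereas each $K_{s,l}^{(k)}$ is a Haar wavelet with vanishing integral, so $\Lambda^{s,s,0,0,0}_{0,0}=0$; hence the $B\,\delta_\sigma$ on the right of (\ref{sesg}) for $\sigma=0$ forces the coefficient of $\|\Psi_0\|^2$ to be zero on the left, which can only hold if $\Psi_0=0$ itself.

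The main obstacle I expect is the bookkeeping in the negative-$k$ part of the $\sigma=0$, $l\neq l'$ case (equation (\ref{v+(+)0})): in that regime $k+\sigma=k<0$ so the dilation $K^{(k)}_{s,l}$ becomes supported on a dyadic interval strictly larger than $[0,1)$ and the overlap $\alpha_{i,j}^{s,l,k}$ is governed by truncations of the binary expansion of $l$. Matching the truncations $\sum_{t=0}^{-k}l_{p+k+t}2^t$ against their counterparts for $l'$ at the same $k$ is exactly the Kronecker-delta factor appearing in (\ref{v+(+)0}); the remaining positive-$k$ tails, as well as the $\sigma\neq 0$ equations, follow by an easier geometric-series evaluation once the coefficients have been tabulated. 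All other cases then follow by specialising this combinatorial identity.
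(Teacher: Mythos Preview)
Your overall plan---specialize Proposition~\ref{pp11} to the Haar bases, rewrite the $\psi$-sum as $\langle\Psi_{i'},\Psi_i\rangle_{\C^r}$, discard the $s\neq s'$ cases by support disjointness, and then split on $(l,l',\sigma)$---is exactly the paper's approach, including the anticipation that the delicate bookkeeping lies in the binary-truncation matching for (\ref{v+(+)0}).

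However, your derivation of Condition~1 is wrong on two counts. First, the premise is false: not every $K_{s,l}^{(k)}$ has vanishing integral. Looking at (\ref{hbb0}), the element $K_{+,0}^{(0)}=\varphi^H_{0,1}=\chi_{[1,2)}$ is a scaling function, and more generally $K_{s,0}^{(m)}$ is a dilated indicator, not a Haar wavelet. So for $l=0$ the overlaps $\alpha_{0,j}^{+,0,k}$ are certainly not all zero; the appendix gives $\alpha_{0,0}^{+,0,k}=2^{-k/2}$ for $k>0$, $\alpha_{0,1}^{+,0,0}=1$, and $\alpha_{0,2^u+v}^{+,0,-u}=2^{-u/2}$. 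Second, even granting your (false) claim that $\Lambda^{s,s,0,0,0}_{0,0}=0$, the conclusion does not follow: a zero coefficient in front of $\|\Psi_0\|^2$ would mean $\Psi_0$ drops out of that equation, not that it is forced to vanish.

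The correct mechanism is the opposite one: the coefficient of $\|\Psi_0\|^2$ in the $s=s'=+$, $l=l'=0$, $\sigma=0$ equation \emph{diverges}. Plugging the values above into your $\Lambda$ with the truncation $|k|\leq a$, $|j|\leq 2^a$ gives a contribution $\big(a+1+\sum_{k=1}^a 2^{-k}\big)\|\Psi_0\|^2$ in front of the remaining sum $\sum_r\|\Psi_{2^r}\|^2$. For this to converge to the finite value $B$ one is forced to have $\Psi_0=0$; only then does the equation reduce to (\ref{v00z}) with $\sigma=0$. Once $\Psi_0=0$ is in hand, the rest of your case analysis goes through as you outline and coincides with the paper's table-driven computation.
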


\begin{proof}
In order to avoid additional indices along the proof, we work with generic $\psi\in\Psi$ and not with $\psi_1,\ldots,\psi_r$. 
Consider the Haar orthonormal bases $\big\{L_i^{(j)}\big\}$ and $\big\{K_{s,l}^{(k)}\big\}$ and the corresponding matrix $\big(\alpha_{i,j}^{s,l,k}\big)$ given in the appendix.
For $s=s'=+$, $l=l'=0$ and $\sigma=0$ in (\ref{ses1}) one obtains
\begin{eqnarray*}
\ds\lim_{a\to\infty}\sum_{i,i'}\big[\sum_{k=-a}^a\sum_{j=-2^a}^{2^a}\overline{\alpha_{i,j}^{s,l,k}} \alpha_{i',j}^{s',l',k+\sigma}\big]
\big[\sum_\psi \overline{\hat\psi_{i}^{(0)}}\hat\psi_{i'}^{(0)}\big]=\\
\ds=\lim_{a\to\infty}\big(a+1+\sum_{k=1}^a 2^{-k}\big)\big[\sum_\psi |\hat\psi_{0}^{(0)}|^2\big]+\sum_{r=0}^\infty \big[\sum_\psi |\hat\psi_{2^r}^{(0)}|^2\big].
\end{eqnarray*}
The last expression can be equal to $B$ if and only if $\hat\psi_{0}^{(0)}=\int \psi=0$ for every $\psi\in\Psi$, which is condition 1 in the statement, and $\sum_{r=0}^\infty \big[\sum_\psi |\hat\psi_{2^r}^{(0)}|^2\big]=B$, the condition (\ref{v00z}) for $\sigma=0$ in the statement.
Assuming then that $\hat\psi_{0}^{(0)}=0$ for every $\psi\in\Psi$, straightforward calculations with the $\alpha_{i,j}^{s,l,k}$'s lead to the fact that the conditions in (\ref{ses1}), for $s=s'=+$, are related with table \ref{table1}.

\begin{table}
\small
\begin{center}
\begin{tabular}{c|c|c|c|c|c|}
$s=s'=+$ & $2^0$ & $2^1$ & $2^2$ & $2^3$ & $\cdots$
\\ \hline
\begin{tabular}{c}$\mathbf{l,l'=0}$ \\ (0) \end{tabular} 
& \begin{tabular}{l} $k,k+\sigma=1$ \\ $j=0$ \\ $i,i'=2^0$ \end{tabular} 
& \begin{tabular}{l} $k,k+\sigma=2$ \\ $j=0$ \\ $i,i'=2^1$ \end{tabular} 
& \begin{tabular}{l} $k,k+\sigma=3$ \\ $j=0$ \\ $i,i'=2^2$ \end{tabular} 
& \begin{tabular}{l} $k,k+\sigma=4$ \\ $j=0$ \\ $i,i'=2^3$ \end{tabular} 
& $\cdots$
\\ \hline
\begin{tabular}{c}$\mathbf{l,l'=1}$ \\ (1) \end{tabular} 
& \begin{tabular}{l} $\mathbf{k,k+\sigma=0}$ \\ $\mathbf{j=1}$ \\ $\mathbf{i,i'=2^0}$ \end{tabular} 
& \begin{tabular}{l} $k,k+\sigma=1$ \\ $j=0$ \\ $i,i'=2^1+1$ \end{tabular} 
& \begin{tabular}{l} $k,k+\sigma=2$ \\ $j=0$ \\ $i,i'=2^2+1$ \end{tabular} 
& \begin{tabular}{l} $k,k+\sigma=3$ \\ $j=0$ \\ $i,i'=2^3+1$ \end{tabular} 
& $\cdots$
\\ \hline
\begin{tabular}{c}$\mathbf{l,l'=2}$ \\ (10) \end{tabular} 
& \begin{tabular}{l} $k,k+\sigma=-1$ \\ $j=2$ \\ $i,i'=2^0$ \end{tabular} 
& \begin{tabular}{l} $\mathbf{k,k+\sigma=0}$ \\ $\mathbf{j=1}$ \\ $\mathbf{i,i'=2^1}$ \end{tabular} 
& \begin{tabular}{l} $k,k+\sigma=1$ \\ $j=0$ \\ $i,i'=2^2+2$ \end{tabular} 
& \begin{tabular}{l} $k,k+\sigma=2$ \\ $j=0$ \\ $i,i'=2^3+2$ \end{tabular} 
& $\cdots$
\\ \hline
\begin{tabular}{c}$\mathbf{l,l'=3}$ \\ (11) \end{tabular} 
& \begin{tabular}{l} $k,k+\sigma=-1$ \\ $j=3$ \\ $i,i'=2^0$ \end{tabular} 
& \begin{tabular}{l} $\mathbf{k,k+\sigma=0}$ \\ $\mathbf{j=1}$ \\ $\mathbf{i,i'=2^1+1}$ \end{tabular} 
& \begin{tabular}{l} $k,k+\sigma=1$ \\ $j=0$ \\ $i,i'=2^2+3$ \end{tabular} 
& \begin{tabular}{l} $k,k+\sigma=2$ \\ $j=0$ \\ $i,i'=2^3+3$ \end{tabular} 
& $\cdots$
\\ \hline
\begin{tabular}{c}$\mathbf{l,l'=4}$ \\ (100) \end{tabular} 
& \begin{tabular}{l} $k,k+\sigma=-2$ \\ $j=4$ \\ $i,i'=2^0$ \end{tabular} 
& \begin{tabular}{l} $k,k+\sigma=-1$ \\ $j=2$ \\ $i,i'=2^1$ \end{tabular} 
& \begin{tabular}{l} $\mathbf{k,k+\sigma=0}$ \\ $\mathbf{j=1}$ \\ $\mathbf{i,i'=2^2}$ \end{tabular} 
& \begin{tabular}{l} $k,k+\sigma=1$ \\ $j=0$ \\ $i,i'=2^3+4$ \end{tabular} 
& $\cdots$
\\ \hline
\begin{tabular}{c}$\mathbf{l,l'=5}$ \\ (101) \end{tabular} 
& \begin{tabular}{l} $k,k+\sigma=-2$ \\ $j=5$ \\ $i,i'=2^0$ \end{tabular} 
& \begin{tabular}{l} $k,k+\sigma=-1$ \\ $j=2$ \\ $i,i'=2^1+1$ \end{tabular} 
& \begin{tabular}{l} $\mathbf{k,k+\sigma=0}$ \\ $\mathbf{j=1}$ \\ $\mathbf{i,i'=2^2+1}$ \end{tabular} 
& \begin{tabular}{l} $k,k+\sigma=1$ \\ $j=0$ \\ $i,i'=2^3+5$ \end{tabular} 
& $\cdots$
\\ \hline
\begin{tabular}{c}$\mathbf{l,l'=6}$ \\ (110) \end{tabular} 
& \begin{tabular}{l} $k,k+\sigma=-2$ \\ $j=6$ \\ $i,i'=2^0$ \end{tabular} 
& \begin{tabular}{l} $k,k+\sigma=-1$ \\ $j=3$ \\ $i,i'=2^1$ \end{tabular} 
& \begin{tabular}{l} $\mathbf{k,k+\sigma=0}$ \\ $\mathbf{j=1}$ \\ $\mathbf{i,i'=2^2+2}$ \end{tabular} 
& \begin{tabular}{l} $k,k+\sigma=1$ \\ $j=0$ \\ $i,i'=2^3+6$ \end{tabular} 
& $\cdots$
\\ \hline
\begin{tabular}{c}$\mathbf{l,l'=7}$ \\ (111) \end{tabular} 
& \begin{tabular}{l} $k,k+\sigma=-2$ \\ $j=7$ \\ $i,i'=2^0$ \end{tabular} 
& \begin{tabular}{l} $k,k+\sigma=-1$ \\ $j=3$ \\ $i,i'=2^1+1$ \end{tabular} 
& \begin{tabular}{l} $\mathbf{k,k+\sigma=0}$ \\ $\mathbf{j=1}$ \\ $\mathbf{i,i'=2^2+3}$ \end{tabular} 
& \begin{tabular}{l} $k,k+\sigma=1$ \\ $j=0$ \\ $i,i'=2^3+7$ \end{tabular} 
& $\cdots$
\\ \hline
$\vdots$ & $\vdots$ & $\vdots$ & $\vdots$ & $\vdots$ & $\ddots$
\\ \hline
\end{tabular}
\end{center}

\begin{center}
\begin{tabular}{c|c|c|c|}
$s=s'=+$ & $2^r,\,(0\leq r<p)$ & $2^p$ & $2^r,\,(r>p)$  
\\ \hline
\begin{tabular}{c}$\mathbf{l,l'>0}$ \\ ($l,l'=2^p+\sum_{t=0}^{p-1}l_t2^t$) \end{tabular} 
& \begin{tabular}{l} $k,k+\sigma=r-p$ \\ $j=\sum_{t=0}^{p-r}l_{r+t}2^t$ \\ $i,i'=2^r+\sum_{t=0}^{r-1}l_t2^t$ \end{tabular} 
& \begin{tabular}{l} $\mathbf{k,k+\sigma=0}$ \\ $\mathbf{j=1}$ \\ $\mathbf{i,i'=l,l'}$ \end{tabular} 
& \begin{tabular}{l} $k,k+\sigma=r-p$ \\ $j=0$ \\ $i,i'=2^r+l$ \end{tabular} 
\\ \hline
\end{tabular}
\end{center}
\caption{\label{table1} Distribution of indices for $s=s'=+$ in the set of equations (\ref{ses1}) using the Haar orthonormal bases $\big\{L_i^{(j)}\big\}$ and $\big\{K_{s,l}^{(k)}\big\}$ and the corresponding matrix $\big(\alpha_{i,j}^{s,l,k}\big)$ given in the appendix. See the proof of proposition \ref{prophv} for details.}
\end{table}

To obtain the conditions in (\ref{ses1}), for $s=s'=+$, table \ref{table1} is used as follows: Choose the values of $l$ and $l'$, and consider the corresponding files in the table. Choose the value of $\sigma$. In each entry of the table, the indices $k$ and $i$ are associated with $l$, and the indices $k+\sigma$ and $i'$ are associated with $l'$. One must pair the columns with the same $k$ and $j$ in both files, multiply $\overline{\hat\psi_i^{(0)}}$ by $\hat\psi_{i'}^{(0)}$ for the indices $i$, $i'$ selected in the paired columns, sum over $\psi\in\Psi$ and, finally, sum over all the paired columns.
For example, for $l=l'=0$ and $\sigma=0$ we arrive to the already known equation
$$%\be\label{000}
\sum_{k=1}^\infty \big[\sum_\psi |\hat\psi_{2^{k-1}}^{(0)}|^2\big]=B,
$$%\ee
the condition (\ref{v00z}) for $\sigma=0$ in the statement. For $l=l'=0$ and $\sigma>0$ we get (note that in this case $j=0$ in every column)
\be\label{00+}
\sum_{k=1}^\infty \big[\sum_\psi \overline{\hat\psi_{2^{k-1}}^{(0)}}\,\hat\psi_{2^{k-1+\sigma}}^{(0)}\big]=0.
\ee
For $l=l'=0$ and $\sigma<0$, the resultant condition 
$\sum_{k=1-\sigma}^\infty \big[\sum_\psi \overline{\hat\psi_{2^{k-1}}^{(0)}}\,\hat\psi_{2^{k-1+\sigma}}^{(0)}\big]=0$ coincides with (\ref{00+}) for $-\sigma$. Both of them correspond to the condition (\ref{v00z}) for $\sigma\neq 0$ in the statement.
For $l=l'=2^p+\sum_{t=0}^{p-1}l_t2^t$, ($p\geq0$),  and $\sigma=0$,
$$%\be\label{++0}
\sum_{k=-p}^{-1} \big[\sum_\psi |\hat\psi_{2^{p+k}+\sum_{t=0}^{p+k-1}l_t2^t}^{(0)}|^2\big]+
\big[\sum_\psi |\hat\psi_{l}^{(0)}|^2\big]+
\sum_{k=1}^\infty \big[\sum_\psi |\hat\psi_{2^{p+k}+l}^{(0)}|^2\big]=B,
$$%\ee
which is the condition (\ref{v++0}) in the statement.
For $l=l'=2^p+\sum_{t=0}^{p-1}l_t2^t$, ($p\geq0$),  and $\sigma>0$ (note that in this case there are different $j$'s in the first columns of table \ref{table1}),
\be\label{+++}
\sum_{k=1}^\infty \big[\sum_\psi \overline{\hat\psi_{2^{p+k}+l}^{(0)}}\,\hat\psi_{2^{p+k+\sigma}+l}^{(0)}\big]=0.
\ee
For $l=l'=2^p+\sum_{t=0}^{p-1}l_t2^t$, ($p\geq0$),  and $\sigma<0$, the condition $\sum_{k=1-\sigma}^\infty \big[\sum_\psi \overline{\hat\psi_{2^{p+k}+l}^{(0)}}\,\hat\psi_{2^{p+k+\sigma}+l}^{(0)}\big]=0$ coincides with (\ref{+++}). Both of them correspond to condition (\ref{v+++}) in the statement.
For $l=0$, $l'=2^p+\sum_{t=0}^{p-1}l'_t2^t$, ($p\geq0$),  and $\sigma\in\Z$,
$$%\be\label{0+z}
\sum_{k=\sup\{1,1-\sigma\}}^\infty \big[\sum_\psi \overline{\hat\psi_{2^{k-1}}^{(0)}}\,\hat\psi_{2^{p+k+\sigma}+l'}^{(0)}\big]=0,
$$%\ee
the condition (\ref{v0+z}) in the statement.
For $l=2^p+\sum_{t=0}^{p-1}l_t2^t$, $l'=2^{p'}+\sum_{t=0}^{p'-1}l'_t2^t$, ($p,p'\geq0$),  and $\sigma=0$,
$$%\be\label{+(+)0}
\begin{array}{c}
\ds\sum_{k=\sup\{-p,-p'\}}^{-1} \delta_{(\sum_{t=0}^{-k}l_{p+k+t}2^t)-(\sum_{t=0}^{-k}l'_{p'+k+t}2^t)}\,\big[\sum_\psi \overline{\hat\psi_{2^{p+k}+\sum_{t=0}^{p+k-1}l_t2^t}^{(0)}}\,\hat\psi_{2^{p'+k}+\sum_{t=0}^{p'+k-1}l'_t2^t}^{(0)}\big]+
\\
\ds +\big[\sum_\psi\overline{\hat\psi_{l}^{(0)}}\,\hat\psi_{l'}^{(0)}\big]+\sum_{k=1}^\infty \big[\sum_\psi \overline{\hat\psi_{2^{p+k}+l}^{(0)}}\,\hat\psi_{2^{p'+k}+l'}^{(0)}\big]=0,
\end{array} 
$$%\ee
which is the condition (\ref{v+(+)0}) in the statement.
For $l=2^p+\sum_{t=0}^{p-1}l_t2^t$, $l'=2^{p'}+\sum_{t=0}^{p'-1}l'_t2^t$, ($p,p'\geq0$),  and $\sigma\in\Z\backslash\{0\}$,
$$%\be\label{+(+)z}
\sum_{k=\sup\{1,1-\sigma\}}^\infty \big[\sum_\psi \overline{\hat\psi_{2^{p+k}+l}^{(0)}}\,\hat\psi_{2^{p'+k+\sigma}+l'}^{(0)}\big]=0,
$$%\ee
the condition (\ref{v+(+)z}) in the statement.
For $s=s'=-$, the conditions derived from the set of equations (\ref{ses1}) are equivalent to the former ones for $s=s'=+$. For $s=+$ and $s'=-$, or $s=-$ and $s'=+$, the set of equations (\ref{ses1}) leads to trivial conditions.
\end{proof}

\subsection{Hardy functions}\label{sect41}

It is not easy to handle the set of conditions for the vectors $\Psi_i\in\C^r$ in proposition \ref{prophv}. A better way to tackle this set of conditions consists in writing them in terms of Hardy functions in $H^+(\partial\D,\C^r)$ we next define.
In such approach, inner matrix functions and results by Halmos (lemma \ref{lwr} below) and Rovnyak (lemma \ref{lRovnyak}) play a central role.

Recall that ${\D}$ denotes the open unit disc of the complex plane $\C$ and $\partial {\D}$ its boundary, the unit circle.
Let $\H$ be a separable Hilbert space and denote by ${\mc L}(\H)$ the space of bounded operators on $\H$ (in the sequel we shall only need to consider Hilbert spaces of finite dimension, $\H=\C^r$, for which ${\mc L}(\H)$ can be identified with the space of complex $(r\times r)$-matrices). We denote by $H^2({\D};\H)$ the {\it Hardy class} of functions
$$
\tilde {\mf h}(\lambda)=\sum_{k=0}^\infty \lambda^k h_k,\qquad (\lambda\in {\D},\,h_k\in\H),
$$
with values in $\H$, such that $\sum ||h_k||^2_\H<\infty$.
For each function $\tilde {\mf h}\in H^2({\D};\H)$ the non-tangential limit in strong sense 
$$
\slim_{\lambda\to\omega} \tilde {\mf h}(\lambda)=\sum_{k=0}^\infty \omega^k h_k=: {\mf h}(\omega)
$$
exist for almost all $\omega\in {\partial {\D}}$. The functions $\tilde {\mf h}(\lambda)$ and $ {\mf h}(\omega)$ determine each other (they are connected by Poisson formula), so that we can identify $H^2({\D};\H)$ with a subspace of $L^2(\partial {\D};\H)$, say $H^+(\partial {\D};\H)$, thus providing $H^2({\D};\H)$ with the Hilbert space structure of $H^+(\partial {\D};\H)$ and embedding it in $L^2(\partial {\D};\H)$ as a subspace (the space $L^2(\partial {\D};\H)$ has been defined in section \ref{sect2}).

From now on, the operator ``multiplication by $\om$" on $H^+(\partial {\D};\H)$ shall be denoted by $M^+$, that is, 
\be\label{fshift}
[M^+{\mf h}](\om):=\om\cdot {\mf h}(\om)\,,\quad ({\mf h}\in H^+(\partial {\D};\H),\, \om\in\partial {\D})\,.
\ee
The operator $M^+$ is an isometry from $H^+(\partial {\D};\H)$ into $H^+(\partial {\D};\H)$.
A subspace $\M\subseteq H^+(\partial {\D};\H)$ is called a {\it wandering subspace for $M^+$} if $[M^+]^m\M\perp [M^+]^n\M$ whenever $m$ and $n$ are distinct non-negative integers.

Consider the subspace $\CC$ of $H^+(\partial {\D};\H)$ consisting of all constant functions, i.e., the functions ${\mf h}:\partial\D\to\H$ such that there exists a vector $h\in\H$ with ${\mf h}(\om)=h$ for a.e. $\om\in\partial {\D}$.
A weakly measurable\footnote{$A^+$ {\it weakly measurable} means the scalar product $(A^+(\omega)h,g)_\H$ is a Borel measurable scalar function on $\partial\D$ for each $h,g\in\H$.} operator-valued function 
$$
A^+:\partial {\D}\to{\mc L}(\H):\om\mapsto A(\om)
$$
is called\footnote{The name {\it rigid Taylor operator function} is introduced by Halmos \cite{HALMOS61}. The name {\it $M^+$-inner function} is used by Rosenblum and Rovnyak \cite{RR85} and co-workers.
}
 a {\it $M^+$-inner function} or {\it rigid Taylor operator function} if $A^+$ maps $\CC$ into $H^+(\partial {\D};\H)$ and $A^+(\omega)$ is for a.e. $\omega\in \partial {\D}$ a partial isometry\footnote{An operator $B\in{\mathcal L}(\H)$ is a {\it partial isometry} when there is a (closed) subspace $\M$ of $\H$ such that $||Bu||=||u||$ for $u\in\M$ and $Bv=0$ for $v\in\M^\perp$. In such case $\M$ is called the {\it initial space} of $B$.} on $\H$ with the same initial space.

According to Halmos \cite[lemma 5]{HALMOS61}, wandering subspaces for $M^+$ and $M^+$-inner functions (or rigid Taylor operator functions) are related as follows:

\begin{lemma}[Halmos]\label{lwr} 
A subspace $\M$ of $H^+(\partial {\D};\H)$ is a wandering subspace for $M^+$ if and only if there exists a $M^+$-inner function $A^+$ such that $\M=A^+\,\CC$.
The subspace $\M$ uniquely determines $A^+$ to within a constant partially isometric factor on the right.
\end{lemma}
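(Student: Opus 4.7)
The plan is to exchange spectral conditions on $\M\subseteq H^+(\partial\D;\H)$ for pointwise conditions on $\om\in\partial\D$, using the Fourier expansion of the scalar function $\om\mapsto \<{\mf f}(\om),{\mf g}(\om)\>_\H$ attached to functions ${\mf f},{\mf g}\in H^+(\partial\D;\H)$.

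For the direction $(\Leftarrow)$, assume $\M=A^+\,\CC$ with $A^+$ an $M^+$-inner function with (constant) initial space $\NN\subseteq\H$. For $h,g\in\H$ and integers $m,n\geq 0$,
$$
\<[M^+]^m A^+h,\,[M^+]^n A^+g\>_{H^+}=\int_{\partial\D}\om^{m-n}\<A^+(\om)h,\,A^+(\om)g\>_\H\,d\mu(\om).
$$
Because $A^+(\om)$ is a.e.\ a partial isometry with the \emph{same} initial space $\NN$, the integrand equals the constant $\<P_\NN h,P_\NN g\>_\H$, and $\int_{\partial\D}\om^{m-n}d\mu=\delta_{m,n}$ forces orthogonality of $[M^+]^m\M$ and $[M^+]^n\M$ whenever $m\neq n$. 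Hence $\M$ is wandering.

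For the converse $(\Rightarrow)$, fix an orthonormal basis $\{h_\alpha\}$ of $\M$. The wandering property and the fact that $M^+$ is an isometry yield
$$
\<[M^+]^m h_\alpha,\,[M^+]^n h_\beta\>_{H^+}=\delta_{m,n}\,\delta_{\alpha,\beta}\,,\quad(m,n\geq 0),
$$
and rewriting the left-hand side as $\int_{\partial\D}\om^{m-n}\<h_\alpha(\om),h_\beta(\om)\>_\H\,d\mu$, we see that every Fourier coefficient of $F_{\alpha\beta}(\om):=\<h_\alpha(\om),h_\beta(\om)\>_\H$ equals $\delta_{\alpha\beta}\delta_{n,0}$, since $m-n$ ranges over all of $\Z$. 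By Fourier uniqueness $F_{\alpha\beta}\equiv\delta_{\alpha\beta}$ a.e.; unioning the countably many exceptional null sets provides a single full-measure set on which $\{h_\alpha(\om)\}_\alpha$ is orthonormal in $\H$. Now choose any orthonormal family $\{e_\alpha\}$ in $\H$ of the same cardinality, set $\NN:=\overline{\mathrm{span}}\{e_\alpha\}$, and define $A^+(\om):\H\to\H$ by $A^+(\om)e_\alpha:=h_\alpha(\om)$ and $A^+(\om)v:=0$ for $v\in\NN^\perp$. Then $A^+$ is weakly measurable, a.e.\ a partial isometry with constant initial space $\NN$, and $A^+\,\CC=\overline{\mathrm{span}}\{h_\alpha\}=\M$.

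For uniqueness up to a right constant partially isometric factor, suppose $A^+\,\CC=B^+\,\CC=\M$. Each of $A^+$ and $B^+$ restricts to an isometric bijection from its initial space (viewed as a subspace of $\CC$) onto $\M$; hence $U:=\bigl(A^+|_{\NN_A}\bigr)^{-1}\circ B^+$ is a \emph{constant} partial isometry on $\H$ with initial space $\NN_B$ and final space $\NN_A$, satisfying $B^+=A^+U$. The subtle step is the Fourier-uniqueness argument in $(\Rightarrow)$: one must pass from countably many pointwise-a.e.\ identities to a single common full-measure set, and secure weak measurability of the constructed $A^+$. In the setting needed later ($\H=\C^r$ with $\dim\M\leq r<\infty$), both issues are trivial, which is the only case required for the applications that follow.
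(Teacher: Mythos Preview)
The paper does not supply its own proof of this lemma; it is quoted verbatim from Halmos \cite{HALMOS61}, lemma~5, and used as a black box. Your argument is essentially the classical proof (and is close in spirit to Halmos's original), so there is no alternative approach in the paper to compare against.

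Your proof is correct in outline. Two places deserve a line of justification. First, in the uniqueness step you assert that $U=(A^+|_{\NN_A})^{-1}\circ B^+$ is \emph{constant}: this holds because both $A^+|_{\NN_A}:\NN_A\to\M$ and $B^+:\H\to\M$ are maps between fixed Hilbert spaces (constant functions to $H^+$-functions), so their composite $U:\H\to\NN_A\subseteq\H$ is $\omega$-independent by construction; the pointwise identity $B^+(\omega)=A^+(\omega)U$ a.e.\ then follows by verifying it on a countable dense subset of $\H$ (to merge the null sets) and using that both sides are contractions. Second, in the $(\Rightarrow)$ direction you should record explicitly that the constructed $A^+$ maps $\CC$ into $H^+$: for $h=\sum_\alpha c_\alpha e_\alpha+v$ with $v\in\NN^\perp$, one has $A^+h=\sum_\alpha c_\alpha h_\alpha$, which converges in $H^+$ because the $h_\alpha$ are orthonormal there; and that $A^+\CC$ is therefore all of $\M$, not merely dense in it. As you correctly observe, in the only case needed for the paper ($\H=\C^r$, $\dim\M\leq r$) these refinements are immediate.
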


Another fundamental result in what follows is due to Rovnyak \cite[lemma 5]{R62}:

\begin{lemma}[Rovnyak]\label{lRovnyak} 
If $\H$ has finite dimension $r$, there is no orthonormal set
${\mf h}_1,\ldots,{\mf h}_{r+1}$ in $H^+(\partial {\D};\H)$ containing $r+1$ elements and such that $\om^m{\mf h}_i(\om)$ is orthogonal to $\om^n{\mf h}_j(\om)$ whenever $m\neq n$.
\end{lemma}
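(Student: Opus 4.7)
The plan is to exploit Halmos' lemma (Lemma \ref{lwr}) directly: the hypothesis on ${\mf h}_1,\ldots,{\mf h}_{r+1}$ says exactly that their span is a wandering subspace for $M^+$, and Halmos' lemma then forces a dimension inequality that will fail.

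More precisely, first I would set $\M := \mathrm{span}\{{\mf h}_1,\ldots,{\mf h}_{r+1}\}\subseteq H^+(\partial\D;\H)$. This subspace has dimension $r+1$ because the ${\mf h}_i$ are orthonormal. The orthogonality hypothesis $\om^m{\mf h}_i\perp\om^n{\mf h}_j$ for $m\neq n$ is precisely the statement that $[M^+]^m\M\perp[M^+]^n\M$ for distinct non-negative integers $m,n$, so $\M$ is a wandering subspace for $M^+$ in the sense of Section~\ref{sect41}.

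By Halmos' lemma \ref{lwr}, there exists an $M^+$-inner function $A^+:\partial\D\to\mathcal L(\H)$ with $\M=A^+\CC$. The key observation is then that $A^+\CC$ is the image of the constant-function space $\CC$ under the linear map $\alpha:\CC\to H^+(\partial\D;\H)$ defined by $\alpha(h)(\om)=A^+(\om)h$, where $h\in\CC$ is identified with a vector in $\H$. Since $\dim\CC=\dim\H=r$, the range of $\alpha$ has dimension at most $r$. Thus $\dim\M=\dim A^+\CC\leq r$, which contradicts $\dim\M=r+1$.

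The only step requiring a little care is the identification $\dim A^+\CC\leq r$: one should note that the constant partially isometric factor in Halmos' lemma does not affect dimensions, and that, because $A^+(\om)$ is a partial isometry on $\H$ with common initial space $\H_0\subseteq\H$ for a.e.\ $\om$, one actually gets $\dim A^+\CC=\dim\H_0\leq r$, with $A^+$ vanishing on constant functions taking values in $\H_0^\perp$. I do not anticipate any serious obstacle here; the entire argument is essentially a dimension count, with Halmos' lemma doing all the structural work.
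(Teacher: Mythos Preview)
The paper does not supply its own proof of this lemma; it simply quotes the result from Rovnyak \cite[lemma 5]{R62}. Your argument is correct and entirely self-contained within the paper's framework: the orthogonality hypothesis makes $\M=\mathrm{span}\{{\mf h}_1,\ldots,{\mf h}_{r+1}\}$ a (finite-dimensional, hence closed) wandering subspace for $M^+$; Halmos' lemma~\ref{lwr} then yields $\M=A^+\CC$; and since $\CC\cong\H$ has dimension $r$, the image $A^+\CC$ of the linear map $h\mapsto A^+(\cdot)h$ has dimension at most $r$, contradicting $\dim\M=r+1$. Your remark about the partial-isometry structure is accurate but not even needed for the bare dimension bound $\dim A^+\CC\le\dim\CC=r$.
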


In terms of Hardy functions proposition \ref{prophv} reads as follows:

\begin{prop}\label{prophh}
Under the conditions of proposition \ref{prophv}, for $l\geq0$ consider the Hardy function ${\mf h}_l\in H^+(\partial\D,\C^r)$ defined by
$$
\begin{array}{l}
\ds {\mf h}_0(\om):= \sum_{k=0}^\infty \om^k\,\Psi_{2^k}\,,
\\[2ex]
\ds {\mf h}_l(\om):= \sum_{k=0}^\infty \om^{k}\,\Psi_{2^{p+k+1}+l},\quad (l\geq1,\,l=2^p+\sum_{t=0}^{p-1}l_t2^t)\,.
\end{array}
$$
Then, the wavelet system $X$ of the form (\ref{ws}) generated by $\Psi$ is a tight frame, with frame bound $B$, if and only if the following conditions are satisfied: 
\be\label{h0}
\Psi_0=0\in\C^r,
\ee
\be\label{h1}
\<\om^m{\mf h}_{l'}(\om),\om^n{\mf h}_{l}(\om)\>_{H^+(\partial\D,\C^r)}=\delta_{m-n}\beta_{l,l'},\quad (l,l',m,n\in\N\cup\{0\}),
\ee
where
$$
\beta_{l,l'}:=\left\{\begin{array}{ll}
\ds B, & \text{ if }l,l'=0,
\\[1ex]
\ds 0, & \text{ if }l=0,l'\geq1,
\\[1ex]
\ds B-\sum_{k=-p}^{0} ||\Psi_{2^{p+k}+\sum_{t=0}^{p+k-1}l_t2^t}||^2_{\C^r}, & \text{ if }l=l'\geq 1,\, l=2^p+\sum_{t=0}^{p-1}l_t2^t.
\\[1ex]
\ds \begin{array}{l}\ds-\sum_{k=\sup\{-p,-p'\}}^{0} \delta_{(\sum_{t=0}^{-k}l_{p+k+t}2^t)-(\sum_{t=0}^{-k}l'_{p'+k+t}2^t)}\times\\ \ds \times\<\Psi_{2^{p'+k}+\sum_{t=0}^{p'+k-1}l'_t2^t},\Psi_{2^{p+k}+\sum_{t=0}^{p+k-1}l_t2^t}\>_{\C^r},\end{array}
&\ds \begin{array}{r}\ds \text{ if }l,l'\geq 1,\,l\neq l',\,l=2^p+\sum_{t=0}^{p-1}l_t2^t,\\ \ds l'=2^{p'}+\sum_{t=0}^{p'-1}l'_t2^t.\end{array}
\end{array}\right.
$$
\end{prop}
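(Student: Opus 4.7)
The plan is to view condition (\ref{h1}) as equivalent, case by case, to the six equations (\ref{v00z})--(\ref{v+(+)z}) of Proposition \ref{prophv}. The underlying tool is the elementary identity
$$
\<\om^m {\mf g},\om^n {\mf h}\>_{H^+(\partial\D,\C^r)}=\sum_{\stackrel{j,k\geq 0}{j+m=k+n}}\<g_j,h_k\>_{\C^r}\,,
$$
valid for ${\mf g}(\om)=\sum_{j\geq 0}\om^j g_j$, ${\mf h}(\om)=\sum_{k\geq 0}\om^k h_k$ in $H^+(\partial\D,\C^r)$ and $m,n\geq 0$. This quantity depends on $(m,n)$ only through $\sigma=m-n\in\Z$, so condition (\ref{h1}) reduces to one scalar equation for each ordered pair $(l,l')$ and each $\sigma\in\Z$. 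Well-definedness of each ${\mf h}_l$ as an element of $H^+(\partial\D,\C^r)$ is automatic, since $\Psi\subset L^2[0,1]$ implies $\{\Psi_i\}_{i\geq 0}$ is square-summable in $\C^r$, and the coefficient sequence of ${\mf h}_l$ is a subsequence of this.

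I would then carry out a systematic case analysis. For $l=l'=0$ the coefficients of ${\mf h}_0$ are $\Psi_{2^k}$, and the inner-product formula gives exactly (\ref{v00z}) after the reindexing $k\mapsto k-1$. For $l=0$, $l'\geq 1$, the analogous computation yields (\ref{v0+z}). For $l=l'\geq 1$, the coefficients of ${\mf h}_l$ form the ``tail'' $\{\Psi_{2^{p+k}+l}\}_{k\geq 1}$; when $\sigma=0$, the total mass $\sum_{k\geq 1}\|\Psi_{2^{p+k}+l}\|^2$ must equal $B$ minus the ``head'' terms with $k\in\{-p,\dots,0\}$, which is precisely the content of (\ref{v++0}) and fixes the value of $\beta_{l,l}$, while $\sigma\neq 0$ yields (\ref{v+++}). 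For $l\neq l'$ with both $\geq 1$, the case $\sigma=0$ produces (\ref{v+(+)0}) (with the head-correction defining $\beta_{l,l'}$), and $\sigma\neq 0$ produces (\ref{v+(+)z}). Condition (\ref{h0}) is identical to condition~1 of Proposition \ref{prophv}.

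The main obstacle is purely notational: reconciling the lower summation bounds $\sup\{1,1-\sigma\}$ and the binary-expansion bookkeeping $l=2^p+\sum_{t=0}^{p-1}l_t 2^t$ with the bounds produced by the Hardy inner-product formula requires some index shifts (for instance, $k\mapsto k-\sigma$) and, in certain cases, the observation that since $\delta_\sigma\beta_{l,l'}$ is real and the condition $\<\cdot,\cdot\>=0$ is preserved under conjugation, swapping the two coefficient sequences corresponds to $\sigma\mapsto-\sigma$ and does not alter the set of scalar equations. There is no conceptually deep step; the whole proposition is a reformulation of Proposition \ref{prophv} in the Hardy-space language, and the substance of the proof is to organize the bookkeeping cleanly into the six cases above.
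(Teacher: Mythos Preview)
Your proposal is correct and follows essentially the same approach as the paper: both recognize the proposition as a direct reformulation of Proposition~\ref{prophv} and establish the equivalence by a case-by-case matching of (\ref{h1}) with equations (\ref{v00z})--(\ref{v+(+)z}), together with the identification of (\ref{h0}) with condition~1. Your version is in fact more explicit than the paper's, spelling out the Hardy inner-product identity, the reindexing $k\mapsto k-1$, and the conjugation/$\sigma\mapsto-\sigma$ symmetry that the paper leaves implicit.
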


\begin{proof}
Condition 1 of Proposition \ref{prophv} coincides with (\ref{h0}), and conditions 2--4 in Proposition \ref{prophv}, i.e., equations (\ref{v00z})--(\ref{v+(+)z}), are equivalent to condition (\ref{h1}) of the statement. In detail, conditions (\ref{v00z}), (\ref{v+++}), (\ref{v0+z}) and (\ref{v+(+)z}), all of them for $\sigma\in\Z\backslash\{0\}$, are in correspondence with condition (\ref{h1}) for $m\neq n$; condition (\ref{v00z}) for $\sigma=0$ corresponds to the first line of the definition of $\beta_{l,l'}$ in condition (\ref{h1}); condition (\ref{v0+z}) for $\sigma=0$ corresponds to the second line of the definition of $\beta_{l,l'}$ in condition (\ref{h1}); condition (\ref{v++0}) corresponds to the third line of the definition of $\beta_{l,l'}$ in condition (\ref{h1}); finally, condition (\ref{v+(+)0}) corresponds to the fourth line of the definition of $\beta_{l,l'}$ in condition (\ref{h1}).
\end{proof}

%-------------------------------------------------------
\subsection{Case $r=1$: $\Psi=\{\psi\}\subset L^2[0,1]$}\label{sect42}

For $r=1$, proposition \ref{prophh} leads to the following:

\begin{coro}\label{coro17}
The only function $\psi\in L^2[0,1]$ such that the wavelet system $X$ of the form (\ref{ws}) generated by $\Psi=\{\psi\}$ is a tight frame for $L^2(\R)$, with frame bound $B$, is proportional to the Haar wavelet:
$$
\psi=\beta\,[\chi_{[0,1/2)}-\chi_{[1/2,1)}],
$$
where $\beta\in\C$ and $|\beta|^2=B$.
\end{coro}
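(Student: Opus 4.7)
The plan is to apply Proposition \ref{prophh} in the scalar case $r=1$ and then squeeze out the structure using Rovnyak's lemma plus the ``energy'' identity (\ref{v++0}). With $r=1$, the Hardy functions ${\mf h}_l\in H^+(\partial\D,\C)$ are scalar, and condition (\ref{h1}) reads
$$
\<\om^m{\mf h}_{l'},\om^n{\mf h}_l\>_{H^+(\partial\D,\C)}=\delta_{m-n}\,\beta_{l,l'}\,.
$$
Since $\|{\mf h}_0\|^2=\beta_{0,0}=B>0$, the function ${\mf h}_0$ is nonzero. My first step is to rule out any other nonzero ${\mf h}_{l_0}$ with $l_0\geq 1$. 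From the $l=0,\,l'=l_0$ case of $\beta$, the vectors ${\mf h}_0$ and ${\mf h}_{l_0}$ are orthogonal; after normalizing them to unit vectors, (\ref{h1}) with $m\neq n$ shows that $\om^m(\cdot)\perp\om^n(\cdot)$ between any pair of them. This produces an orthonormal set of $r+1=2$ elements in $H^+(\partial\D,\C)$ satisfying the hypothesis of Rovnyak's lemma (Lemma \ref{lRovnyak}) with $r=1$, which is a contradiction. Hence ${\mf h}_l\equiv 0$ for every $l\geq 1$.

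Next I would translate ${\mf h}_l\equiv 0$ for $l\ge1$ back into a statement about the Fourier coefficients $\Psi_i$. Using the fact that every positive integer that is \emph{not} a power of $2$ can be written uniquely as $2^{p+k+1}+l$ with $l=2^p+\sum_{t=0}^{p-1}l_t2^t\geq 1$ and $k\geq 0$, the vanishing of all ${\mf h}_l(\om)=\sum_{k\geq 0}\om^k\Psi_{2^{p+k+1}+l}$ is equivalent to $\Psi_n=0$ for every $n\geq 1$ that is not a power of $2$. Combined with condition (\ref{h0}) ($\Psi_0=0$), the only coefficients that can be nonzero are the $\Psi_{2^j}$ with $j\geq 0$.

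Finally I would use (\ref{v++0}) with the special choice $l=2^q$ (so $p=q$ and all bits $l_t$ vanish). The first sum collapses to $\sum_{j=0}^{q}|\Psi_{2^j}|^2$, while every term $\Psi_{2^{q+k}+2^q}$ in the second sum has index $2^q(2^k+1)$, which is not a power of $2$ for $k\ge1$ and therefore vanishes. Thus
$$
\sum_{j=0}^{q}|\Psi_{2^j}|^2=B\qquad\text{for every }q\ge 0,
$$
which telescopes to $|\Psi_1|^2=B$ and $\Psi_{2^j}=0$ for all $j\geq 1$. Writing $\Psi_1=\beta$ with $|\beta|^2=B$ and recalling from the appendix that $L_1^{(0)}=\chi_{[0,1/2)}-\chi_{[1/2,1)}$ gives $\psi=\beta\,[\chi_{[0,1/2)}-\chi_{[1/2,1)}]$, as claimed.

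The argument is largely bookkeeping once the machinery is in place; the only subtle point is step one, where the application of Rovnyak's lemma has to use simultaneously the mutual orthogonality $\langle{\mf h}_0,{\mf h}_{l_0}\rangle=0$ (from the $l=0,\,l'\ge1$ case of $\beta_{l,l'}$) and the shift-orthogonality built into (\ref{h1}), which is what makes the normalized pair $\{{\mf h}_0/\sqrt{B},\,{\mf h}_{l_0}/\|{\mf h}_{l_0}\|\}$ satisfy Rovnyak's hypothesis.
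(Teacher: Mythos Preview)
Your argument is correct and follows essentially the same route as the paper's proof: both use Rovnyak's lemma (applied to the pair $\{{\mf h}_0,{\mf h}_{l_0}\}$) to force ${\mf h}_l=0$ for $l\ge1$, and then extract $|\Psi_1|^2=B$ and $\Psi_i=0$ for $i\neq1$ from the diagonal part of condition~(\ref{h1}). The only cosmetic difference is that you unwind the final step via (\ref{v++0}) with $l=2^q$, whereas the paper invokes condition~(\ref{h1}) with $m=n=0$, $l=l'=1$ directly (together with $\|{\mf h}_0\|^2=B$); these are the same identity read two ways.
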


\begin{proof}
Condition (\ref{h1}) in proposition \ref{prophh}, with $l=l'\geq 0$ and $m\neq n$, implies that each ${\mf h}_l$ is a scalar ($M^+$-)inner function in $H^+(\partial\D,\C)$, unless $\beta(l,l)=0$. Since $\beta(0,0)=B>0$, ${\mf h}_0$ is a scalar inner function in $H^+(\partial\D,\C)$. Condition (\ref{h1}) again, now with $m=n=0$, $l=0$ and $l'\geq1$, assures that ${\mf h}_0$ is orthogonal to every ${\mf h}_{l'}$, ($l'\geq1$). Then, according to Rovniak's lemma \ref{lRovnyak}, one has ${\mf h}_{l'}=0$ for every $l'\geq1$. 
From condition (\ref{h1}) once more,  with $m=n=0$ and $l=l'=1$, one gets $||\Psi_1||^2=|\hat\psi_1^{(0)}|^2=B$, so that  $||\Psi_l||^2=|\hat\psi_l^{(0)}|^2=0$ for $l\neq 1$.
\end{proof}

\begin{remark}\label{rm18}\rm
Corollary \ref{coro17} implies, in particular, that the only orthonormal wavelet $\psi\in L^2(\R)$ with $\text{supp}\,\psi\subseteq [0,1]$ is the Haar wavelet. This fact contradicts theorem 5 in \cite{GS11b}. The main problem in \cite{GS11b} is to check the completeness condition (ii) of corollary 3 there, and the sufficient condition given in item (2) of proposition 4 there, $\tilde\psi_{+,1}^{(0)}\neq 0$, fails to be right. According to corollary \ref{coro17} here, the completeness condition (ii) of corollary 3 in \cite{GS11b} is satisfied if and only if $\tilde\psi_{+,1}^{(0)}=1$. Thus, for the functions $\psi$ given in theorem 5 of \cite{GS11b}, save the Haar wavelet, the family $\{\psi_{m,n}:=D^mT^n\psi:m,n\in\Z\}$ is an orthonormal system of $L^2(\R)$, but it is not complete. 
\end{remark}

%-------------------------------------------------------
\subsection{Case $r=2$: $\Psi=\{\psi_1,\psi_2\}\subset L^2[0,1]$}\label{sect43}

For $r=2$, condition (\ref{h1}) in proposition \ref{prophh} implies, among other things, that the closed subspace generated by any subfamily of the set $\{{\mf h}_l\}_{l\in\N\cup\{0\}}$ is a wandering subspace for $M^+$ in $H^+_{\C^2}=H^+(\partial\D,\C^2)$.
According to Halmos's lemma \ref{lwr}, such a wandering subspace has at most dimension $2$ and is of the form $A^+\,\CC$,  for some rigid Taylor ($M^+$-inner) operator-valued function $A^+$, where $\CC$ denotes the subspace of constant functions. Thus, $A^+(\omega):\C^2\to\C^2$ is for a.e. $\omega\in \partial {\D}$ a partial isometry with the same initial subspace.
For non-zero $A^+$, such initial subspace, say $C\subset\C^2$, can have dimension $1$ or $2$.

Consider, in particular, the closed wandering subspace for $M^+$ generated by $\{{\mf h}_0,{\mf h}_1\}$ in $H^+_{\C^2}$ and the corresponding rigid Taylor ($M^+$-inner) operator-valued function $A^+$ with initial subspace $C\subset\C^2$. Condition (\ref{h1}) implies, in particular, that 
\be\label{ch1}
||{\mf h}_0||^2_{H^+_{\C^2}}=B\neq 0;\quad {\mf h}_0\perp {\mf h}_l,\quad (l\geq 1),
\ee
\be\label{ch2}
||{\mf h}_1||^2_{H^+_{\C^2}}=B-||\Psi_1||^2_{\C^2};\quad \<{\mf h}_l,{\mf h}_1\>_{H^+_{\C^2}}=-\<\Psi_l,\Psi_1\>_{\C^2},\quad (l\geq 2).
\ee
Then, if $\text{dim}\,C=1$, the only option is that ${\mf h}_0\neq 0$ and ${\mf h}_1=0$, so that $B=||\Psi_1||^2_{\C^2}$; in such case, conditions (\ref{h0}) and (\ref{h1}) are satisfied if and only if $\Psi_1\neq 0$ and $\Psi_l=0$ for every $l\neq1$ in $\N\cup\{0\}$. That is, if $\text{dim}\,C=1$, in $\Psi=\{\psi_1,\psi_2\}$ both functions $\psi_1$ and $\psi_2$ are proportional to the Haar wavelet.
Thus, the only possible non-trivial situation requires that $\text{dim}\,C=2$.
For $\text{dim}\,C=2$, the rigid Taylor ($M^+$-inner) operator-valued function $A^+$ can be written as a ($2\times 2$) matrix inner function 
$$
A^+(\om)=\left(\begin{array}{cc} {\mf a}^{(0)}_1(\om) & {\mf a}^{(1)}_1(\om)\\ {\mf a}^{(0)}_2(\om)& {\mf a}^{(1)}_2(\om)\end{array}\right),
$$
whose entries are functions belonging to the scalar Hardy space $H^+_{\C}=H^+(\partial\D,\C)$ and such that $A^+(\om)$ is unitary for a.e. $\om\in\partial\D$. In other words, the columns of $A^+$, say
$$
{\mf a}^{(0)}(\om):=\left(\begin{array}{c} {\mf a}^{(0)}_1(\om) \\ {\mf a}^{(0)}_2(\om)\end{array}\right),\quad 
{\mf a}^{(1)}(\om):=\left(\begin{array}{c} {\mf a}^{(1)}_1(\om) \\ {\mf a}^{(1)}_2(\om)\end{array}\right),
$$
are elements of $H^+_{\C^2}=H^+(\partial\D,\C^2)$ satisfying
\be\label{a0a1}
\<{\mf a}^{(i)},\om^m{\mf a}^{(j)}\>_{H^+_{\C^2}}=\delta_m\delta_{i-j},\quad  (m\in\Z),\, (i,j=0,1).
\ee
Since the closed subspace generated by $\{{\mf h}_0,{\mf h}_1\}$ coincides with $A^+\CC$, the functions ${\mf h}_0$ and ${\mf h}_1$ can be taken to be proportional to these vectors: ${\mf h}_0=B_0\,{\mf a}^{(0)}$ and  ${\mf h}_1=C_1\,{\mf a}^{(1)}$ for certain non-null constants $B_0,C_1\in\C$. Moreover, according to (\ref{ch1}), (\ref{ch2}) and Rovniak's lemma \ref{lRovnyak},
$$
{\mf h}_0(\om)=B_0\,{\mf a}^{(0)}(\om);\quad 
{\mf h}_l(\om)=C_l\,{\mf a}^{(1)}(\om),\quad (l\geq 1),
$$
where the constants $B_0,C_l\in\C$ must satisfy
$$
|B_0|^2=B\neq0,\quad |C_1|^2=B-||\Psi_1||^2_{\C^2}\neq0,
$$
$$
C_l\,\overline{C_1}=\<{\mf h}_l,{\mf h}_1\>_{H^+_{\C^2}}=-\<\Psi_l,\Psi_1\>_{\C^2},\quad (l\geq 2).
$$

Consider the Taylor-Fourier series
$$
{\mf a}^{(0)}(\om)=\sum_{k=0}^\infty \om^k\,a^{(0)}_k,\quad
{\mf a}^{(1)}(\om)=\sum_{k=0}^\infty \om^k\,a^{(1)}_k,
$$
where $a^{(0)}_k,a^{(1)}_k\in\C^2$, ($k\in\N\cup\{0\}$), and 
$$
{\mf h}_0(\om)=\sum_{k=0}^\infty \om^k\,\Psi_{2^k}=\sum_{k=0}^\infty \om^k\,
\left(\begin{array}{c} [\hat\psi_1]_{2^k}^{(0)} \\[1ex] [\hat\psi_2]_{2^k}^{(0)}\end{array}\right)={B_0}\sum_{k=0}^\infty \om^k\,a^{(0)}_k,
$$
$$
{\mf h}_1(\om)=\sum_{k=0}^\infty \om^k\,\Psi_{2^{k+1}+1}=\sum_{k=0}^\infty \om^k\,
\left(\begin{array}{c} [\hat\psi_1]_{2^{k+1}+1}^{(0)} \\[1ex] [\hat\psi_2]_{2^{k+1}+1}^{(0)}\end{array}\right)={C_1}\sum_{k=0}^\infty \om^k\,a^{(1)}_k.
$$

In these terms, 
\be\label{c1}
\quad|C_1|^2 = B-||\Psi_1||_{\C^2}^2)=B(1-||a^{(0)}_0||_{\C^2}^2)\neq0,
\ee
\be\label{c2}
C_{2^k} = -\frac{1}{\overline{C_1}}\,\<\Psi_{2^k},\Psi_1\>_{\C^2}=
-\frac{B}{\overline{C_1}}\,\<a^{(0)}_k,a^{(0)}_0\>_{\C^2},\quad (k\in\N),
\ee
\be\label{c21}
C_{2^k+1}=-\frac{1}{\overline{C_1}}\,\<\Psi_{2^k+1},\Psi_1\>_{\C^2}=
-\frac{C_1\overline{B_0}}{\overline{C_1}}\,\<a^{(1)}_{k-1},a^{(0)}_0\>_{\C^2},\quad (k\in\N).
\ee
Any other $C_l$, ($l\geq2$), can be expressed in terms of the $C_{2^k}$ and $C_{2^k+1}$:

\begin{lemma}\label{lemacs0}
For $l\geq2$, $l\neq 2^k,\,2^k+1$, with $l=2^p+2^{p_1}+2^{p_2}+\cdots+2^{p_s}$, where $p>p_1>p_2>\cdots>p_s\geq0$, one has
\be\label{clg}
C_l=\left\{\begin{array}{ll}
C_1^{-(s-1)}C_{2^{p-p_1}+1}C_{2^{p_1-p_2}+1}\cdots C_{2^{p_{s-2}-p_{s-1}}+1}C_{2^{p_{s-1}}+1}, & \text{ if }p_s=0\,\,(\text{if $l$ is odd}),
\\[2ex]
C_1^{-s}C_{2^{p-p_1}+1}C_{2^{p_1-p_2}+1}\cdots C_{2^{p_{s-1}-p_{s}}+1}C_{2^{p_{s}}}, & \text{ if }p_s>0\,\,(\text{if $l$ is even}),
\end{array}\right.
\ee
\end{lemma}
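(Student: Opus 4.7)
My plan is to derive a one-step recursion $C_{l}=C_{l-2^{p}}\,C_{2^{p-p_{1}}+1}/C_{1}$ which removes the leading bit of $l$, and then iterate it $s$ times.

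The first step is to identify how $\Psi_{l}$ sits inside the Taylor coefficients of ${\mf a}^{(1)}$. Since ${\mf h}_{m}(\om)=C_{m}\,{\mf a}^{(1)}(\om)$ for every $m\geq 1$, comparing Taylor coefficients in the very definition of ${\mf h}_{m}$ yields
$$
\Psi_{2^{p+k+1}+m}=C_{m}\,a_{k}^{(1)}\,,\qquad (m\geq 1,\;m=2^{p}+\sum_{t=0}^{p-1}m_{t}2^{t},\;k\geq 0)\,.
$$
Given $l$ as in the statement, let $l'=l-2^{p}=2^{p_{1}}+\cdots +2^{p_{s}}$; then $l'\geq 1$ has leading bit at $p_{1}<p$, and the decomposition $l=2^{p_{1}+k+1}+l'$ with $k=p-p_{1}-1\geq 0$ gives
$$
\Psi_{l}=C_{l'}\,a_{p-p_{1}-1}^{(1)}\,.
$$
Similarly, since $\Psi_{1}=\Psi_{2^{0}}$ is the $0$-th coefficient of ${\mf h}_{0}=B_{0}{\mf a}^{(0)}$, one has $\Psi_{1}=B_{0}a_{0}^{(0)}$.

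Next, I substitute these two identities into the identity $C_{l}\overline{C_{1}}=-\langle\Psi_{l},\Psi_{1}\rangle_{\C^{2}}$ stated just above (\ref{c1}). This gives
$$
C_{l}=-\frac{\overline{B_{0}}\,C_{l'}}{\overline{C_{1}}}\,\langle a_{p-p_{1}-1}^{(1)},a_{0}^{(0)}\rangle_{\C^{2}}\,.
$$
Now I invoke (\ref{c21}) with $k=p-p_{1}$ to solve for the inner product: $\langle a_{p-p_{1}-1}^{(1)},a_{0}^{(0)}\rangle_{\C^{2}}=-\overline{C_{1}}\,C_{2^{p-p_{1}}+1}/(C_{1}\overline{B_{0}})$. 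Plugging this back gives the desired one-step recursion
$$
C_{l}=\frac{C_{l'}\,C_{2^{p-p_{1}}+1}}{C_{1}}\,.
$$

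Finally, I iterate: writing $l^{(0)}=l$ and $l^{(i+1)}=l^{(i)}-2^{p_{i}}$ (with the convention $p_{0}=p$), the recursion applies as long as $l^{(i)}$ has at least two bits set, i.e.\ for $i=0,1,\ldots,s-1$. After $s$ iterations
$$
C_{l}=C_{1}^{-s}\,C_{2^{p_{s}}}\,\prod_{i=0}^{s-1}C_{2^{p_{i}-p_{i+1}}+1}\,.
$$
If $p_{s}=0$ (the odd case) then $C_{2^{p_{s}}}=C_{1}$ and one factor of $C_{1}$ in the denominator cancels, yielding the first line of (\ref{clg}); if $p_{s}>0$ (the even case) no cancellation occurs and the formula on the second line of (\ref{clg}) results. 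The only step requiring any care is checking that the indexing $k=p-p_{1}-1$ in the application of (\ref{c21}) lines up correctly; everything else is a routine telescoping.
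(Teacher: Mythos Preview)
Your proof is correct and follows essentially the same approach as the paper's: both derive the one-step recursion $C_l=C_{l_1}C_{2^{p-p_1}+1}/C_1$ by reading $\Psi_l$ as the $(p-p_1-1)$-th Taylor coefficient of ${\mf h}_{l_1}=C_{l_1}{\mf a}^{(1)}$, substituting into $C_l\overline{C_1}=-\langle\Psi_l,\Psi_1\rangle$, and invoking (\ref{c21}). The only cosmetic difference is that the paper stops the iteration at $C_{2^{p_{s-1}}+2^{p_s}}$ and splits into the two cases from there, whereas you iterate one step further to $C_{2^{p_s}}$ and then use $C_{2^0}=C_1$ in the odd case; both are equally valid.
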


\begin{proof}
For $l\geq2$, $l\neq 2^k,\,2^k+1$, so that $l=2^p+l_1$ with $l_1\neq0,1$, and $l_1=2^{p_1}+l_2$, since ${\mf h}_{l_1}(\om):= \sum_{k=0}^\infty \om^{k}\,\Psi_{2^{p_1+k+1}+l_1}$, the vector $\Psi_l=\Psi_{2^p+l_1}$ is the $k$-coefficient in the Taylor series of ${\mf h}_{l_1}$, where  $k=p-p_1-1$. Thus, being ${\mf h}_{l_1}=C_{l_1}{\mf a}^{(1)}$,
\be\label{clcl1}
C_l=-\frac{1}{\overline{C_1}}\,\<\Psi_l,\Psi_1\>_{\C^2}=
-\frac{1}{\overline{C_1}}\,\<C_{l_1}a^{(1)}_{p-p_1-1},B_0a^{(0)}_0\>_{\C^2}=\frac{C_{2^{p-p_1}+1}}{C_1}\,C_{l_1}.
\ee
The same argument for $l_1,l_2,\ldots,l_{p_{s-2}}$ leads to
$$
C_l=\frac{ C_{2^{p-p_1}+1}C_{2^{p_1-p_2}+1}\cdots C_{2^{p_{s-2}-p_{s-1}}+1}}{C_1^{s-1}}\,C_{2^{p_{s-1}}+2^{p_s}}.
$$
In the final step, if $p_s=0$, then $C_{2^{p_{s-1}}+2^{p_s}}=C_{2^{p_{s-1}}+1}$; on the other hand, if $p_s>0$, as before,
$$
C_{2^{p_{s-1}}+2^{p_s}}=\frac{C_{2^{p_{s-1}-p_s}+1}}{C_1}\,C_{2^{p_s}}.
$$
\end{proof}

In general, since ${\mf h}_l=C_l\,{\mf a}^{(1)}$ for $l\geq1$,
$$
||{\mf h}_{l}||_{H^+_{\C^2}}=|C_l|^2\,||{\mf a}^{(1)}||_{H^+_{\C^2}}=|C_l|^2.
$$
On the other hand, equation (\ref{h1}) with $m=n$ and $l=l'\geq 1$, $l=2^p+\sum_{t=0}^{p-1}l_t2^t$, implies
$$
||{\mf h}_{l}||_{H^+_{\C^2}}=B-\sum_{k=-p}^{0} ||\Psi_{2^{p+k}+\sum_{t=0}^{p+k-1}l_t2^t}||^2_{\C^2}.
$$
In particular, for $l=2^p$ with $p\geq0$,
\be\label{cm2}
|C_{2^p}|^2=||{\mf h}_{2^p}||_{H^+_{\C^2}}=B-\sum_{k=0}^p ||\Psi_{2^k}||^2_{\C^2}=B\big(1-\sum_{k=0}^p ||a^{(0)}_k||^2_{\C^2}\big),\quad (p\geq0).
\ee
And, for $l=2^p+1$ with $p>0$,
\be\label{cm21}
\begin{array}{rl}
\ds |C_{2^p+1}|^2=||{\mf h}_{2^p+1}||_{H^+_{\C^2}}&
\ds =B-||\Psi_{1}||^2_{\C^2}-\sum_{k=1}^p ||\Psi_{2^k+1}||^2_{\C^2}=
\\[1ex]
&\ds =B\big(1-||a^{(0)}_0||^2_{\C^2}\big)\big(1-\sum_{k=0}^{p-1} ||a^{(1)}_k||^2_{\C^2}\big)\,,\quad (p>0)\,.
\end{array}
\ee

In a similar way, from equation (\ref{h1}), now with $m=n$ and $l\neq l'$, $l=2^p$ or $l=2^p+1$, $l'=2^{p'}$ or $l'=2^{p'}+1$, one gets
\be\label{c2c2}
C_{2^{p'}}\overline{C_{2^{p}}}=
\<{\mf h}_{2^{p'}},{\mf h}_{2^{p}}\>_{H^+_{\C^2}}=
-\sum_{r=0}^{p} \<\Psi_{2^{p'-r}},\Psi_{2^{p-r}}\>_{\C^2}=
-B\sum_{r=0}^{p} \<a^{(0)}_{p'-r},a^{(0)}_{p-r}\>_{\C^2},
\quad (0\leq p<p'),
\ee
\be\label{c21c21}
\begin{array}{rl}
\ds C_{2^{p'}+1}\overline{C_{2^{p}+1}} &
\ds =\<{\mf h}_{2^{p'}+1},{\mf h}_{2^{p}+1}\>_{H^+_{\C^2}}=
-\sum_{r=0}^{p-1} \<\Psi_{2^{p'-r}+1},\Psi_{2^{p-r}+1}\>=
\\[2ex]
&\ds =-B\big(1-||a^{(0)}_0||^2_{\C^2}\big)\sum_{r=0}^{p-1} \<a^{(1)}_{p'-r-1},a^{(1)}_{p-r-1}\>_{\C^2},
\quad (0<p<p'),
\end{array}
\ee
\be\label{c21c2}
\begin{array}{rl}
\ds C_{2^{p'}+1}\overline{C_{2^{p}}}&
\ds =\<{\mf h}_{2^{p'}+1},{\mf h}_{2^{p}}\>_{H^+_{\C^2}}=
\\
&\ds =\left\{\begin{array}{ll}
\ds -\sum_{r=0}^{p} \<\Psi_{2^{p'-r}+1},\Psi_{2^{p-r}}\>_{\C^2}=
-C_1\overline{B_0}\sum_{r=0}^{p} \<a^{(1)}_{p'-r-1},a^{(0)}_{p-r}\>_{\C^2},
 & \ds \text{ if } 0\leq p<p',
\\[2ex]
\ds -\sum_{r=0}^{p'-1} \<\Psi_{2^{p'-r}+1},\Psi_{2^{p-r}}\>_{\C^2}= 
-C_1\overline{B_0}\sum_{r=0}^{p'-1}\<a^{(1)}_{p'-r-1},a^{(0)}_{p-r}\>_{\C^2},
& \ds \text{ if } 0<p'\leq p,
\end{array}\right.
\end{array}
\ee

Let us note that (\ref{c1}) coincides with (\ref{cm2}) for $p'=0$, (\ref{c2}) is (\ref{c2c2}) for $p=0$ and $p'=k\in\N$, and (\ref{c21}) is (\ref{c21c2}) for $p=0$ and $p'=k\in\N$.

Conditions (\ref{cm2})--(\ref{c21c2}) are part of the conditions (\ref{h1}) in proposition \ref{prophh}, but they are sufficient for the family of Hardy functions $\{{\mf h}_l\}_{l\in\N\cup\{0\}}\subset H^+_{\C^2}$ to satisfy the complete set of conditions (\ref{h1}):

\begin{prop}\label{pclc2}
Let
$$
{\mf a}^{(0)}(\om)=\sum_{k=0}^\infty \om^k\,a^{(0)}_k,\quad
{\mf a}^{(1)}(\om)=\sum_{k=0}^\infty \om^k\,a^{(1)}_k
$$
be a pair of functions in $H^+_{\C^2}=H^+(\partial\D,\C^2)$ satisfying (\ref{a0a1}) and $||a^{(0)}_0||_{\C^2}<1$.
Let $B_0\in\C$ such that 
\be\label{b0}
|B_0|^2=B
\ee
and let $\{C_l\}_{l\in\N}\subset\C$ be a sequence verifying (\ref{cm2})--(\ref{c21c2}).
For $l\in\N\cup\{0\}$, define the Hardy function ${\mf h}_l\in H^+_{\C^2}$ by 
$$
\begin{array}{l}
\ds {\mf h}_0(\om)= \sum_{k=0}^\infty \om^k\,\Psi_{2^k}:=B_0\,{\mf a}^{(0)}(\om)= B_0\,\sum_{k=0}^\infty \om^k\,a^{(0)}_k\,,
\\[2ex]
\ds {\mf h}_l(\om)= \sum_{k=0}^\infty \om^{k}\,\Psi_{2^{p+k+1}+l}:=C_l\,{\mf a}^{(1)}(\om)=C_l\,\sum_{k=0}^\infty \om^k\,a^{(1)}_k,\quad (l\geq1,\,l=2^p+\sum_{t=0}^{p-1}l_t2^t)\,.

\end{array}
$$
Then the family $\{{\mf h}_l\}_{l\in\N\cup\{0\}}\subset H^+_{\C^2}$ satisfies the complete set of conditions (\ref{h1}).
\end{prop}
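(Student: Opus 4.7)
The strategy is to reduce condition (\ref{h1}) to scalar identities in $B_0$ and the $C_l$'s, and then verify those identities from the hypotheses (\ref{b0}) and (\ref{cm2})--(\ref{c21c2}) via the telescoping formula (\ref{clg}).

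First I would exploit the rigid structure of $A^+$: because $\mathfrak{h}_0=B_0\,\mathfrak{a}^{(0)}$ and $\mathfrak{h}_l=C_l\,\mathfrak{a}^{(1)}$ for every $l\ge 1$, and because the system $\{\omega^m\mathfrak{a}^{(0)},\omega^m\mathfrak{a}^{(1)}:m\ge 0\}$ is orthonormal in $H^+_{\mathbb{C}^2}$ by (\ref{a0a1}), the inner product $\langle\omega^m\mathfrak{h}_{l'},\omega^n\mathfrak{h}_l\rangle_{H^+_{\mathbb{C}^2}}$ vanishes automatically for $m\ne n$ (matching the factor $\delta_{m-n}$ on the right-hand side of (\ref{h1})) and also whenever exactly one of $l,l'$ is zero (matching $\beta_{0,l'}=0$ for $l'\ge 1$); the case $l=l'=0$, $m=n$, yields $|B_0|^2=B=\beta_{0,0}$, which is exactly hypothesis (\ref{b0}). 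The remaining obligation is therefore to prove
$$
C_{l'}\,\overline{C_l}=\beta_{l,l'}\qquad(l,l'\ge 1).
$$

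Next I would invoke Lemma \ref{lemacs0}. Every $C_l$ with $l\ge 1$ factors as a finite ordered product of $C_1^{-1}$'s together with factors of the two types $C_{2^q+1}$ and at most one $C_{2^q}$, the exponents tracking the gaps in the binary expansion of $l$. Consequently, $C_{l'}\overline{C_l}$ decomposes into a product of powers of $|C_1|^{-2}$ with basic pieces of the three types
$$
C_{2^{q'}+1}\,\overline{C_{2^q+1}},\qquad C_{2^{q'}+1}\,\overline{C_{2^q}},\qquad C_{2^{q'}}\,\overline{C_{2^q}},
$$
each of which is given explicitly by the hypotheses (\ref{cm2})--(\ref{c21c2}) as a finite sum of $\mathbb{C}^2$-inner products of the coefficients $a^{(0)}_k$ and $a^{(1)}_k$. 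Translating back via $\Psi_{2^k}=B_0\,a^{(0)}_k$ and $\Psi_{2^{p+k+1}+l}=C_l\,a^{(1)}_k$, the assembled product should reproduce the explicit $\beta_{l,l'}$ of Proposition \ref{prophh}.

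I would organise the write-up as an induction on $p+p'$, where $2^p$ and $2^{p'}$ are the leading binary digits of $l$ and $l'$. The base case $l,l'\in\{2^p,2^p+1\}$ is precisely (\ref{cm2})--(\ref{c21c2}). For the inductive step, writing $l=2^p+l_1$ and $l'=2^{p'}+l'_1$ and mimicking (\ref{clcl1}), one reduces $C_{l'}\overline{C_l}$ to $\frac{C_{2^{p'-p'_1}+1}\,\overline{C_{2^{p-p_1}+1}}}{|C_1|^2}\,C_{l'_1}\overline{C_{l_1}}$, with the obvious modifications when $l_1$ or $l'_1$ vanishes and the $\mathfrak{a}^{(0)}$--$\mathfrak{a}^{(1)}$ role switches; the hypotheses then supply the top-digit contribution and the inductive hypothesis supplies $C_{l'_1}\overline{C_{l_1}}$. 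The main obstacle will be matching the combinatorial Kronecker factor $\delta_{(\sum_t l_{p+k+t}2^t)-(\sum_t l'_{p'+k+t}2^t)}$ in the fourth branch of the definition of $\beta_{l,l'}$ with the orthogonality of the $a^{(1)}_k$'s embedded in the telescoping product: this forces a careful case split on the depth at which the binary expansions of $l$ and $l'$ first diverge, including the degenerate subcases where one expansion is a prefix of the other.
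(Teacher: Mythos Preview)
Your proposal is correct and follows essentially the same route as the paper's own proof: use (\ref{a0a1}) to dispose of the $m\ne n$ and mixed $l=0$, $l'\ge 1$ cases, use (\ref{b0}) for $l=l'=0$, and then reduce the remaining identity $C_{l'}\overline{C_l}=\beta_{l,l'}$ for $l,l'\ge 1$ to the base cases (\ref{cm2})--(\ref{c21c2}) via the one-step recursion (\ref{clcl1}). The paper writes the inductive step by peeling off the leading binary digits of $l$ and $l'$ exactly as you describe, observing that the Kronecker factor in the fourth branch of $\beta_{l,l'}$ becomes $\delta_{(p-p_1)-(p'-p'_1)}$ and splits into the two subcases handled by (\ref{cm21}) and (\ref{c21c21}); your anticipated case analysis on where the binary expansions first diverge is precisely this.
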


\begin{proof}
(\ref{a0a1}) for $m\neq0$ implies (\ref{h1}) for $m\neq n$. 
(\ref{a0a1}) for $m=0$ and $i=j=0$, together with (\ref{b0}), coincide with (\ref{h1}) for $m=n$ and $l=l'=0$. 
(\ref{a0a1}) for $m=0$, $i=0$ and $j=1$ leads to (\ref{h1}) for $m=n$, $l=0$ and $l'\geq1$.
(\ref{cm2})--(\ref{c21c2}) are just (\ref{h1}) for $m=n$ and $l,l'\geq1$, $l=2^p$ or $l=2^p+1$, $l'=2^{p'}$ or $l'=2^{p'}+1$.

For $m=n$ and $l=l'\geq2$, $l=2^p+\sum_{t=0}^{p-1}l_t2^t$, $l\neq 2^k,\,2^k+1$, so that $l=2^p+l_1$  with $l_1\neq0,1$, condition (\ref{h1}) is
$$
\begin{array}{rl}
\ds |C_{l}|^2=\<{\mf h}_{l},{\mf h}_{l}\>_{H^+_{\C^2}} &
\ds =B-\sum_{k=-p}^0 ||\Psi_{2^{p+k}+\sum_{t=0}^{p+k-1}l_t2^t}||^2_{\C^r}=
\\[2ex]
& \ds =|C_{l_1}|^2-\sum_{k=-(p-p_1-1)}^0 ||\Psi_{2^{p+k}+l_1}||^2_{\C^r}=
\\[2ex]
& \ds =|C_{l_1}|^2\,\big(1-\sum_{r=0}^{p-p_1-1} ||a^{(1)}_r||^2_{\C^r}\big)=|C_{l_1}|^2\,\frac{|C_{2^{p-p_1}+1}|^2}{B(1-||a^{(0)}_0||^2_{\C^2})},
\end{array}
$$
which is true, due to (\ref{clcl1}) together with (\ref{cm2}) for $p=1$.

For $m=n$ and $l,l'\geq2$, $l\neq l'$, $l,l'\neq 2^k,\,2^k+1$, so that $l=2^p+l_1$, $l'=2^{p'}+l'_1$ with $l_1,l'_1\neq0,1$, and $l_1=2^{p_1}+l_2$, $l'_1=2^{p'_1}+l'_2$, condition (\ref{h1}) reads
$$
\begin{array}{rl}
\ds C_{l'}\overline{C_l}=\<{\mf h}_{l'},{\mf h}_{l}\>_{H^+_{\C^2}} &
\ds =\delta_{(p-p_1)-(p'-p'_1)}\<{\mf h}_{l'_1},{\mf h}_{l_1}\>_{H^+_{\C^2}}-
\sum_{k=\sup\{p_1-p+1,p'_1-p'+1\}}^{0}\<\Psi_{2^{p'+k}+l'_1},\Psi_{2^{p+k}+l_1}\>_{\C^2}=
\\[2ex]
& \ds =C_{l'_1}\overline{C_{l_1}}\big(\delta_{(p-p_1)-(p'-p'_1)}-\sum_{r=0}^{\inf\{p-p_1-1,p'-p'_1-1\}}\<a^{(1)}_{p'-r-p'_1-1},a^{(1)}_{p-r-p_1-1}\>_{\C^2}\big),
\end{array}
$$
which, by (\ref{clcl1}), coincides with (\ref{cm21}) when $p-p_1=p'-p_1'$ and coincides with (\ref{c21c21}) when $p-p_1\neq p'-p_1'$.

And in a similar way for the two remaining cases: for $m=n$, $l=2^p$ or $l=2^p+1$, and $l'\neq 2^k,\,2^k+1$.
\end{proof}

Proposition \ref{pclc2} says that everything can be written in terms of the $M^+$-inner matrix $A^+$ or, equivalently, in terms of the pair of functions ${\mf a}^{(0)}$ and ${\mf a}^{(1)}$ of $H^+_{\C^2}=H^+(\partial\D,\C^2)$:

\begin{prop}\label{pclc2a}
Let
$$
{\mf a}^{(0)}(\om)=\sum_{k=0}^\infty \om^k\,a^{(0)}_k\quad\text{ and }\quad
{\mf a}^{(1)}(\om)=\sum_{k=0}^\infty \om^k\,a^{(1)}_k
$$
be a pair of functions in $H^+_{\C^2}$ satisfying (\ref{a0a1}), $||a^{(0)}_0||_{\C^2}<1$, and such that
\be\label{cm2x}
\frac{\big|\<a^{(0)}_p,a^{(0)}_0\>_{\C^2}\big|^2}{1-||a^{(0)}_0||^2_{\C^2}}=1-\sum_{k=0}^p ||a^{(0)}_k||^2_{\C^2},\quad (p>0),
\ee
\be\label{cm21x}
\frac{\big|\<a^{(1)}_p,a^{(0)}_0\>_{\C^2}\big|^2}{1-||a^{(0)}_0||^2_{\C^2}}=
1-\sum_{k=0}^{p} ||a^{(1)}_k||^2_{\C^2},\quad (p\geq0),
\ee
\be\label{c2c2x}
\frac{\<a^{(0)}_{p'},a^{(0)}_0\>_{\C^2}\<a^{(0)}_{0},a^{(0)}_p\>_{\C^2}}{1-||a^{(0)}_0||^2_{\C^2}}=
-\sum_{r=0}^{p} \<a^{(0)}_{p'-r},a^{(0)}_{p-r}\>_{\C^2},
\quad (0<p<p'),
\ee
\be\label{c21c21x}
\frac{\<a^{(1)}_{p'},a^{(0)}_0\>_{\C^2}\<a^{(0)}_{0},a^{(1)}_p\>_{\C^2}}{1-||a^{(0)}_0||^2_{\C^2}}= 
-\sum_{r=0}^{p} \<a^{(1)}_{p'-r},a^{(1)}_{p-r}\>_{\C^2},
\quad (0\leq p<p'),
\ee
\be\label{c21c2x}
\frac{\<a^{(1)}_{p'},a^{(0)}_0\>_{\C^2}\<a^{(0)}_{0},a^{(0)}_p\>_{\C^2}}{1-||a^{(0)}_0||^2_{\C^2}}=
\left\{\begin{array}{ll}
\ds -\sum_{r=0}^{p} \<a^{(1)}_{p'-r},a^{(0)}_{p-r}\>_{\C^2},
 & \ds \text{ if } 0< p\leq p',
\\[2ex]
\ds -\sum_{r=0}^{p'}\<a^{(1)}_{p'-r},a^{(0)}_{p-r}\>_{\C^2},
& \ds \text{ if } 0\leq p'< p.
\end{array}\right.
\ee
Let $B_0\in\C$ such that 
\be\label{b0x}
|B_0|^2=B
\ee
and let $\{C_l\}_{l\in\N}\subset\C$ be a sequence of scalars, where 
\be\label{c1x}
\quad|C_1|^2 =B(1-||a^{(0)}_0||_{\C^2}^2)\neq0,
\ee
\be\label{c2x}
C_{2^k} = -\frac{B}{\overline{C_1}}\,\<a^{(0)}_k,a^{(0)}_0\>_{\C^2},\quad (k\in\N),
\ee
\be\label{c21x}
C_{2^k+1}=-\frac{C_1\overline{B_0}}{\overline{C_1}}\,\<a^{(1)}_{k-1},a^{(0)}_0\>_{\C^2},\quad (k\in\N),
\ee
and the rest of $C_l$'s are given by (\ref{clg}).
For $l\in\N\cup\{0\}$, define the Hardy function ${\mf h}_l\in H^+_{\C^2}$ by 
$$
\begin{array}{l}
\ds {\mf h}_0(\om)= \sum_{k=0}^\infty \om^k\,\Psi_{2^k}:=B_0\,{\mf a}^{(0)}(\om)= B_0\,\sum_{k=0}^\infty \om^k\,a^{(0)}_k\,,
\\[2ex]
\ds {\mf h}_l(\om)= \sum_{k=0}^\infty \om^{k}\,\Psi_{2^{p+k+1}+l}:=C_l\,{\mf a}^{(1)}(\om)=C_l\,\sum_{k=0}^\infty \om^k\,a^{(1)}_k,\quad (l\geq1,\,l=2^p+\sum_{t=0}^{p-1}l_t2^t)\,.
\end{array}
$$
Then the family $\{{\mf h}_l\}_{l\in\N\cup\{0\}}\subset H^+_{\C^2}$ satisfies the complete set of conditions (\ref{h1}).
\end{prop}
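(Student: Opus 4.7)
The plan is to deduce Proposition~\ref{pclc2a} from the previously established Proposition~\ref{pclc2} by purely algebraic substitution. Indeed, the definitions (\ref{c1x})--(\ref{c21x}) are literal restatements of (\ref{c1})--(\ref{c21}), and (\ref{b0x}) is (\ref{b0}); what remains is to check that the hypotheses (\ref{cm2})--(\ref{c21c2}) of Proposition~\ref{pclc2}, once the explicit formulas (\ref{c1x})--(\ref{c21x}) are inserted for $C_1$, $C_{2^k}$, $C_{2^k+1}$, reduce exactly to the conditions (\ref{cm2x})--(\ref{c21c2x}) of the present statement.

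The single identity that powers every substitution is the immediate consequence of (\ref{b0x}) and (\ref{c1x}),
\[
\frac{B}{|C_1|^2}=\frac{1}{1-||a^{(0)}_0||_{\C^2}^2}.
\]
With this in hand I would proceed case by case. For (\ref{cm2}), inserting (\ref{c2x}) gives $|C_{2^p}|^2=(B^2/|C_1|^2)\,|\<a^{(0)}_p,a^{(0)}_0\>_{\C^2}|^2$; dividing by $B$ turns (\ref{cm2}) into (\ref{cm2x}). The same manipulation, using (\ref{c21x}) together with $|B_0|^2=B$, turns (\ref{cm21}) into (\ref{cm21x}), since the unimodular phase $C_1/\overline{C_1}$ carried by $C_{2^k+1}$ disappears in $|C_{2^p+1}|^2$. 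For the mixed identities (\ref{c2c2}), (\ref{c21c21}) and (\ref{c21c2}) I would form the products $C_{2^{p'}}\overline{C_{2^p}}$, $C_{2^{p'}+1}\overline{C_{2^p+1}}$ and $C_{2^{p'}+1}\overline{C_{2^p}}$ from (\ref{c2x}) and (\ref{c21x}), again noting that the phase $C_1/\overline{C_1}$ cancels in the first two and combines correctly with the factor $\overline{B_0}$ in the third, producing (\ref{c2c2x}), (\ref{c21c21x}) and (\ref{c21c2x}); the case split in (\ref{c21c2}) matches the one in (\ref{c21c2x}).

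Once (\ref{cm2})--(\ref{c21c2}) are verified in this way, Proposition~\ref{pclc2} applies directly and yields that $\{{\mf h}_l\}_{l\in\N\cup\{0\}}$ satisfies the complete set of conditions (\ref{h1}). The main obstacle is purely notational rather than conceptual: one must be methodical with complex conjugates, absolute values and the index ranges across five nearly parallel identities, but no new analytic tool is required beyond the substitutions themselves, since the behaviour of the remaining $C_l$ (those with $l$ not of the form $2^k$ or $2^k+1$) is already absorbed into Proposition~\ref{pclc2} via (\ref{clg}) and Lemma~\ref{lemacs0}.
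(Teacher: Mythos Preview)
Your proposal is correct and follows essentially the same route as the paper: reduce Proposition~\ref{pclc2a} to Proposition~\ref{pclc2} by showing that (\ref{cm2x})--(\ref{c21c2x}) are nothing but (\ref{cm2})--(\ref{c21c2}) after the $C$'s are eliminated via (\ref{c1x})--(\ref{c21x}). The paper adds one small bookkeeping remark you leave implicit: the boundary instances of (\ref{cm2}), (\ref{c2c2}) and (\ref{c21c2}) at $p=0$ are \emph{not} among (\ref{cm2x})--(\ref{c21c2x}) but are precisely the defining relations (\ref{c1x})--(\ref{c21x}) themselves, so they hold tautologically; also note the index shift by one between (\ref{cm21})/(\ref{c21c21}) and (\ref{cm21x})/(\ref{c21c21x}) coming from the $a^{(1)}_{k-1}$ in (\ref{c21x}).
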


\begin{proof}
Conditions (\ref{cm2x})--(\ref{c21c2x}) are just conditions (\ref{cm2})--(\ref{c21c2}) where the $C$'s are eliminated using (\ref{c1})--(\ref{c21}), except condition (\ref{cm2}) for $p=0$, condition (\ref{c2c2}) for $p=0$ and condition (\ref{c21c2}) for $p=0$. These three excluded conditions of proposition \ref{pclc2} are the added relations (\ref{c1x})--(\ref{c21x}) in proposition \ref{pclc2a} to define the sequence $\{C_l\}_{l\in\N}$ (which coincide with (\ref{c1})--(\ref{c21})).    
\end{proof}

Taking into account that $1=||{\mf a}^{(j)}||_{H^+_{\C^2}}=\sum_{k=0}^\infty ||a^{(j)}_k||^2_{\C^2}$, ($j=0,1$), (see (\ref{a0a1})), from (\ref{cm2}) and (\ref{cm21}) one deduces that, given $p\geq1$,
\be\label{cm2-0}
C_{2^p}=0\quad \Leftrightarrow\quad C_{2^k}=0\text{ for all }k\geq p\quad \Leftrightarrow\quad a^{(0)}_{k}=0\text{ for all }k> p,
\ee
\be\label{cm21-0}
C_{2^p+1}=0\quad \Leftrightarrow\quad C_{2^k+1}=0\text{ for all }k\geq p\quad \Leftrightarrow\quad a^{(1)}_{k}=0\text{ for all }k\geq p.
\ee

Now we are ready to obtain all the non-trivial families $\{{\mf h}_l\}_{l\in\N\cup\{0\}}\subset H^+_{\C^2}$ that satisfy the complete set of conditions (\ref{h1}) or, equivalently, all the $M^+$-inner ($2\times 2$)-matrix functions
$A^+(\om)$ verifying the conditions (\ref{cm2x})--(\ref{c21c2x}).
For the sake of clarity, we collect the results in propositions \ref{prop22} and \ref{prop23} below.

When $a^{(0)}_0=0$, relations (\ref{c2x}) and (\ref{c21x}) imply that  $C_{2^k}=C_{2^k+1}=0$ for every $k\geq1$, so that, by (\ref{cm2-0}) and (\ref{cm21-0}),  $a^{(0)}_{k}=0$ for every $k>1$, $a^{(1)}_{k}=0$ for every $k>0$, and, by (\ref{clg}), $C_l=0$ for every $l\geq2$. In this case, $\{a^{(0)}_1,a^{(1)}_0\}$ is an orthonormal basis of $\C^2$ (by (\ref{c21x}) for $k=1$, since $C_3=0$) and $|B_0|^2=|C_1|^2=B$. This leads to the families of {\it type 2} in propositions \ref{prop22} and \ref{prop23} below, where $u_0=a^{(0)}_1$ and $u_1=a^{(1)}_0$. 
It is trivial to check that this type of families of Hardy functions satisfies conditions (\ref{h0}) and (\ref{h1}) of proposition \ref{prophh}.

In what follows, we assume that $0<||a^{(0)}_0||<1$.

If $C_2=0$, 
\be\label{c20-1}
{\mf h}_0(\om)=\Psi_1+\om\,\Psi_{2}=B_0(a^{(0)}_0+\om\,a^{(0)}_1),
\quad ||a^{(0)}_1||^2_{\C^2}=1-||a^{(0)}_0||^2_{\C^2},
\quad \text{[from (\ref{cm2}) with $p=1$]};
\ee
\be\label{c20-2}
\<a^{(0)}_1,a^{(0)}_0\>_{\C^2}=0,
\quad \text{[by (\ref{c2}) with $k=1$ or (\ref{c2c2}) with $p=0$, $p'=1$]};
\ee
\be\label{c20-22}
\<a^{(1)}_0,a^{(0)}_1\>_{\C^2}=0,
\quad \text{[by (\ref{c21c2}) with $1=p=p'$]};
\ee
\be\label{c20-3}
\<a^{(1)}_{p'},a^{(0)}_1\>_{\C^2}=-\<a^{(1)}_{p'-1},a^{(0)}_0\>_{\C^2},\quad (p'>0),
\quad \text{[by (\ref{c21c2}) with $1=p<p'$]}.
\ee

If $C_3=0$, 
\be\label{c30-1}
{\mf h}_1(\om)=\Psi_3=C_1\,a^{(1)}_0,\quad ||a^{(1)}_0||^2_{\C^2}=1,
\quad \text{[from (\ref{cm21}) with $p=1$]}; 
\ee
\be\label{c30-2}
\<a^{(1)}_0,a^{(0)}_{p}\>_{\C^2}=0\quad (p\geq0),
\quad \text{[by (\ref{c21}) with $k=1$ and (\ref{c21c2}) with $1=p'\leq p$]}.
\ee

Thus, if $C_2=C_3=0$, by (\ref{c20-2}) and (\ref{c30-2}), the three non-null vectors $a^{(0)}_0,a^{(0)}_1,a^{(1)}_0$ should be orthogonal to each other in $\C^2$, which is not possible.

If $C_3\neq0$, 
\be\label{c3n0-1}
\<a^{(1)}_0,a^{(0)}_0\>_{\C^2}\neq0,
\quad \text{[by (\ref{c21}) with $k=1$]},
\ee
and equating the expressions for $C_{2^{p'}+1}\overline{C_{3}}$ obtained using (\ref{c21}) and (\ref{c21c21}) with $1=p<p'$, i.e., by (\ref{c21c21x}) with $1=p<p'$,
\be\label{c3n0-2}
\frac{\<a^{(1)}_{p'},a^{(0)}_0\>_{\C^2}\,\<a^{(0)}_0,a^{(1)}_{0}\>_{\C^2}}{1-||a^{(0)}_0||^2_{\C^2}}=-\<a^{(1)}_{p'},a^{(1)}_0\>_{\C^2},
\quad (p'>0). 
\ee
Due to (\ref{c3n0-1}), $\frac{\<a^{(1)}_0,a^{(0)}_{0}\>_{\C^2}}{1-||a^{(0)}_0||^2_{\C^2}}a^{(0)}_{0}+a^{(1)}_0\neq0$, and (\ref{c3n0-2}) is equivalent to
\be\label{c3n0-3}
a^{(1)}_p\perp \frac{\<a^{(1)}_0,a^{(0)}_{0}\>_{\C^2}}{1-||a^{(0)}_0||^2_{\C^2}}a^{(0)}_{0}+a^{(1)}_0\neq0,
\quad (p>0). 
\ee

Now, if $C_2=0$ and $C_3\neq0$, by (\ref{c20-2}), (\ref{c20-22}) and (\ref{c3n0-1}),
\be\label{c20c3n0-1}
a^{(1)}_0=\la_0\,a^{(0)}_0,\quad (0\neq\la_0\in\C),\quad 
\frac{\<a^{(1)}_0,a^{(0)}_{0}\>_{\C^2}}{1-||a^{(0)}_0||^2_{\C^2}}a^{(0)}_{0}+a^{(1)}_0=\frac{\la_0}{1-||a^{(0)}_0||^2_{\C^2}}a^{(0)}_{0};
\ee
by (\ref{c20-2})--(\ref{c20-3}), (\ref{c3n0-3}) and (\ref{c20c3n0-1}),
\be\label{c20c3n0-2}
a^{(1)}_1=\la_1\,a^{(0)}_1,\quad (0\neq\la_1\in\C),\quad 
\<a^{(1)}_1,a^{(0)}_1\>_{\C^2}=-\<a^{(1)}_{0},a^{(0)}_0\>_{\C^2},
\quad a^{(1)}_p=0\text{ for }p>1;
\ee
\be\label{c20c3n0-3}
\la_1=-\frac{||a^{(0)}_0||^2_{\C^2}}{1-||a^{(0)}_0||^2_{\C^2}}\,\la_0,
\quad |\la_0|^2=\frac{1-||a^{(0)}_0||^2_{\C^2}}{||a^{(0)}_0||^2_{\C^2}},
\quad \text{[by (\ref{cm2}), (\ref{cm21}), (\ref{c20c3n0-1}) and (\ref{c20c3n0-2})]};
\ee
Here, being $a^{(0)}_p=a^{(1)}_p=0$ for $p>1$, by (\ref{c2}) and (\ref{c21}), $C_{2^k}=0$ for $k>0$ and $C_{2^k+1}=0$ for $k>1$. Thus, by  (\ref{clg}), $C_l\neq0$ only if $l=2^p-1$, ($p>0$), and
$$
C_{2^p-1} = \frac{C_3^{p-1}}{C_1^{p-2}}=
\frac{1}{C_1^{p-2}}\big(-\frac{C_1\overline{B_0}}{\overline{C_1}}\,\<a^{(1)}_{0},a^{(0)}_0\>_{\C^2}\big)^{p-1}=
C_1\big(-\frac{\overline{B_0}\,||a^{(0)}_0||^2_{\C^2}\,\la_0}{\overline{C_1}}\big)^{p-1}\,,\quad (p>1).
$$
This case leads to the families of {\it type 3} in propositions \ref{prop22} and \ref{prop23} below, with $\rho=||a^{(0)}_0||_{\C^2}$, $a^{(0)}_0=\rho u_0$, $a^{(0)}_1=(1-\rho^2)^{1/2} u_1$ and $\theta=\text{arg}(\la_0)$.

If $C_2\neq0$, 
\be\label{c2n0-1}
\<a^{(0)}_1,a^{(0)}_0\>_{\C^2}\neq0,
\quad \text{[by (\ref{c2}) with $k=1$]},
\ee
and equating the expressions for $C_{2^{p'}}\overline{C_{2}}$ obtained using (\ref{c2}) and (\ref{c2c2}) with $1=p<p'$, i.e., by (\ref{c2c2x}) with $1=p<p'$,
\be\label{c2n0-2}
\frac{\<a^{(0)}_{p'},a^{(0)}_0\>_{\C^2}\,\<a^{(0)}_0,a^{(0)}_{1}\>_{\C^2}}{1-||a^{(0)}_0||^2_{\C^2}}=-\<a^{(0)}_{p'},a^{(0)}_1\>_{\C^2}-\<a^{(0)}_{p'-1},a^{(0)}_0\>_{\C^2},
\quad (p'>1). 
\ee

Then, when $C_2\neq0$ and $C_3=0$, 
\be\label{c2n0c30-1}
a^{(0)}_p=\gamma_p a^{(0)}_0, \text{ where }\gamma_p\in\C,\quad (p>0),\quad 
\text{[by (\ref{c30-2})]};
\ee
\be\label{c2n0c30-2}
\gamma_1\neq0,\quad \gamma_p=\big(-\frac{1-||a^{(0)}_0||^2_{\C^2}}{\overline{\gamma_1}}\big)^{p-1}\,\gamma_1,\quad (p>0),\quad
\text{[by (\ref{c2n0-1}), (\ref{c2n0-2}) and (\ref{c2n0c30-1})]}.
\ee
In this case, relations (\ref{a0a1}) for $i=j=0$ and $m\in\Z$, together with (\ref{c2n0c30-2}), lead to $|\gamma_1|=\frac{1-||a^{(0)}_0||^2_{\C^2}}{||a^{(0)}_0||_{\C^2}}$. If $\theta=\text{arg}(\gamma_1)$,
\be\label{c2n0c30-3}
\gamma_p=\big(-||a^{(0)}_0||_{\C^2}\,e^{i\theta}\big)^{p-1}\,\frac{1-||a^{(0)}_0||^2_{\C^2}}{||a^{(0)}_0||_{\C^2}}e^{i\theta},\quad (p>0).
\ee
Since $C_3=0$, by (\ref{cm21}) and lemma \ref{lemacs0}, $C_l\neq0$ only if $l=2^p$, ($p\geq 0$). From (\ref{c2}) and (\ref{c2n0c30-3}),
$$
C_{2^p} = -\frac{B}{\overline{C_1}}\,\<a^{(0)}_p,a^{(0)}_0\>_{\C^2}=
\frac{B}{\overline{C_1}}\,\big(-||a^{(0)}_0||_{\C^2}\,e^{i\theta}\big)^{p}\,(1-||a^{(0)}_0||^2_{\C^2}),\quad (p\in\N).
$$
This case corresponds with the families of {\it type 4} in propositions \ref{prop22} and \ref{prop23} below, where $\rho=||a^{(0)}_0||_{\C^2}$, $a^{(0)}_0=\rho u_0$, $a^{(1)}_0=u_1$ and $\theta=\text{arg}(\gamma_1)$.

Finally, when $C_2\neq0$ and $C_3\neq0$, equating the expressions for $C_3\overline{C_{2^p}}$ obtained using (\ref{c2}), (\ref{c21}) and (\ref{c21c2}) with $1=p'\leq p$, i.e., by (\ref{c21c2x}) with $1=p'\leq p$,
\be\label{c2n0c3n0-0}
a^{(0)}_{p}\perp \frac{\<a^{(1)}_0,a^{(0)}_{0}\>_{\C^2}}{1-||a^{(0)}_0||^2_{\C^2}}a^{(0)}_{0}+a^{(1)}_0\neq0,
\quad (p>0), 
\ee
and equating the expressions for $C_{2^{p'}+1}\overline{C_{2}}$ obtained using (\ref{c2}), (\ref{c21}) and (\ref{c21c2}) with $1=p\leq p'$, i.e., by (\ref{c21c2x}) with $1=p\leq p'$,
\be\label{c2n0c3n0-1}
a^{(1)}_0\perp \frac{\<a^{(0)}_1,a^{(0)}_{0}\>_{\C^2}}{1-||a^{(0)}_0||^2_{\C^2}}a^{(0)}_{0}+a^{(0)}_1\neq0, 
\ee
\be\label{c2n0c3n0-2}
\frac{\<a^{(1)}_{p'},a^{(0)}_0\>_{\C^2}\,\<a^{(0)}_0,a^{(0)}_{1}\>_{\C^2}}{1-||a^{(0)}_0||^2_{\C^2}}=-\<a^{(1)}_{p'},a^{(0)}_1\>_{\C^2}-\<a^{(1)}_{p'-1},a^{(0)}_0\>_{\C^2},
\quad (p'>0). 
\ee
Recall that $\frac{\<a^{(0)}_1,a^{(0)}_{0}\>_{\C^2}}{1-||a^{(0)}_0||^2_{\C^2}}a^{(0)}_{0}+a^{(0)}_1\neq0$ due to (\ref{c2n0-1}). 
Then, by (\ref{c3n0-3}) and (\ref{c2n0c3n0-0}), there exist three unitary vectors $u_0,u_1,v\in\C^2$ and two sequences $\{\rho_p\}_{p\geq0}$ and $\{\tau_p\}_{p\geq0}$ of complex numbers such that  
\be\label{c2n0c3n0-3}
a^{(0)}_0=\rho_0 u_0,\quad a^{(1)}_0=\tau_0 u_1,\quad
a^{(0)}_p=\rho_p v,\quad a^{(1)}_p=\tau_p v,\quad (p>1).
\ee
Moreover, $0<|\rho_0|=||a^{(0)}_0||^2_{\C^2}<1$, $\rho_1\neq0$ (because $C_2\neq0$), $\tau_0\neq0$ (since $C_3\neq0$). Also, 
\be\label{c2n0c3n0-4}
\<v,u_0\>_{\C^2}\neq 0, \text{ [by (\ref{c2n0-1})]};\quad 
\<u_1,u_0\>_{\C^2}\neq 0, \text{ [by (\ref{c3n0-1})]};
\ee
\be\label{c2n0c3n0-5}
\frac{|\rho_0|^2}{1-|\rho_0|^2}\,\<u_0,u_1\>_{\C^2}\,\<v,u_0\>_{\C^2}+\<v,u_1\>_{\C^2}=0, \text{ [by (\ref{c3n0-2}) or (\ref{c2n0c3n0-1}) or (\ref{c2n0c3n0-2})]};
\ee
\be\label{c2n0c3n0-6}
\frac{|\rho_0|^2}{1-|\rho_0|^2}=-\frac{\<v,u_1\>_{\C^2}}{\<u_0,u_1\>_{\C^2}\,\<v,u_0\>_{\C^2}}, \text{ [by (\ref{c2n0c3n0-4}) and (\ref{c2n0c3n0-5})]}
\quad\Rightarrow\quad \<v,u_1\>_{\C^2}\neq 0.
\ee
Taking into account that the sequences $\{\rho_p\}_{p\geq1}$ and $\{\tau_p\}_{p\geq0}$ satisfy the respective recurrence relations (\ref{c2n0-2}) and (\ref{c2n0c3n0-2}), and that both relations coincide,
\be\label{c2n0c3n0-7}
\tau_1=-\frac{\overline{\rho_0}}{\overline{\rho_1}}\frac{\<u_1,u_0\>_{\C^2}}{1+\frac{|\rho_0|^2}{1-|\rho_0|^2}\,|\<v,u_0\>_{\C^2}|^2}\tau_0,\ee
\be\label{c2n0c3n0-8}
\rho_p=r^{p-1}\rho_1,\quad  
\tau_p=r^{p-1}\tau_1,\quad (p>1);\quad
r=-\frac{\overline{\rho_0}}{\overline{\rho_1}}\frac{\<v,u_0\>_{\C^2}}{1+\frac{|\rho_0|^2}{1-|\rho_0|^2}\,|\<v,u_0\>_{\C^2}|^2}.  
\ee
The conditions (\ref{a0a1}) lead to
\be\label{c2n0c3n0-9}
1=|\rho_0|^2+|\rho_1|^2\frac{1}{1-|r|^2},\quad
1=|\tau_0|^2+|\tau_1|^2\frac{1}{1-|r|^2},
\ee
\be\label{c2n0c3n0-10}
0=\rho_0\<u_0,v\>_{\C^2}+\rho_1\frac{\overline{r}}{1-|r|^2},\quad
0=\tau_0\<u_1,v\>_{\C^2}+\tau_1\frac{\overline{r}}{1-|r|^2},\quad
0=\rho_0\overline{\tau_0}\<u_0,u_1\>_{\C^2}+\rho_1\overline{\tau_1}\frac{1}{1-|r|^2}.
\ee
From (\ref{c2n0c3n0-7})--(\ref{c2n0c3n0-10}),
\be\label{c2n0c3n0-11}
|\rho_1|^2=\frac{1-|\rho_0|^2}{1+\frac{|\rho_0|^2}{1-|\rho_0|^2}\,|\<v,u_0\>_{\C^2}|^2},\quad
|\tau_0|^2=\frac{1}{1+\frac{|\rho_0|^2}{1-|\rho_0|^2}\,|\<u_1,u_0\>_{\C^2}|^2},\quad
\tau_1=\tau_0\frac{\rho_1 \<u_1,v\>_{\C^2}}{\rho_0 \<u_0,v\>_{\C^2}},  
\ee
and the interesting additional relations
$$
|r|^2=\frac{\frac{|\rho_0|^2}{1-|\rho_0|^2}\,|\<v,u_0\>_{\C^2}|^2}{1+\frac{|\rho_0|^2}{1-|\rho_0|^2}\,|\<v,u_0\>_{\C^2}|^2},\quad
\frac{|\rho_0|^2}{1-|\rho_0|^2}\,|\<v,u_0\>_{\C^2}|^2= \frac{|r|^2}{1-|r|^2},\quad
\frac{|\tau_1|^2}{|\rho_1|^2}=\frac{1-|\tau_0|^2}{1-|\rho_0|^2}.
$$
From (\ref{c2}), (\ref{c21}) and (\ref{c2n0c3n0-7})--(\ref{c2n0c3n0-10}),
\be\label{c2n0c3n0-12}
\begin{array}{ll}
\begin{array}{l}
\ds C_{2^p}=\frac{B}{\overline{C_1}}\, (1-|\rho_0|^2)\,r^p,
\\
\ds C_{2^p+1}=\frac{C_1\overline{B_0}}{\overline{C_1}}\,\frac{\tau_1}{\rho_1}\, (1-|\rho_0|^2)\,r^{p-1}=
\frac{C_1\overline{B_0}}{\overline{C_1}}\,\frac{\tau_0\<u_1,v\>_{\C^2}}{\rho_0\<u_0,v\>_{\C^2}}\, (1-|\rho_0|^2)\,r^{p-1},
\end{array}
& (p>0).
\end{array}
\ee
In this case, the families of $M^+$-inner ($2\times 2$)-matrix functions
$A^+(\om)$ and Hardy functions $\{{\mf h}_l\}_{l\in\N\cup\{0\}}$ of $H^+_{\C^2}$ are those of {\it type 5} in propositions \ref{prop22} and \ref{prop23} below.

\medskip

Let us remember that the condition $||a^{(0)}_0||_{\C^2}<1$ in proposition \ref{pclc2a} restricts the attention to $M^+$-inner matrices $A^+$ with initial subspace $C$ of dimension two, i.e., such that $A^+(\om)$ is unitary for a.e. $\om\in\partial\D$. On the other hand, as we have seen at the beginning of this section, when dimension of $C$ is one, the only feasible pair $\{{\mf h}_0,{\mf h}_1\}$ leading to a tight wavelet frame, with frame bound $B$, must satisfy ${\mf h}_0=\Psi_1=Ba^{(0)}_0$ and ${\mf h}_1=0$, with $||\Psi_1||_{\C^2}=B$ or $||a^{(0)}_0||_{\C^2}=1$, so that one gets a trivial Haar wavelet frame.
The corresponding $M^+$-inner matrix $A^+$ is of the form
$$
A^+(\om)= \left(\begin{array}{cc} {\mf a}^{(0)}(\om) & {\mf a}^{(1)}(\om)\end{array}\right)=
\left(\begin{array}{cc} a^{(0)}_0 & 0\end{array}\right)\,.
$$
Including this case as ``Type 1'', we have proved the following:

\begin{prop}\label{prop22}
There are five types of families of $M^+$-inner ($2\times 2$)-matrix functions
$$
A^+(\om)= \left(\begin{array}{cc} {\mf a}^{(0)}(\om) & {\mf a}^{(1)}(\om)\end{array}\right)\,,\quad\text{ where } {\mf a}^{(0)},{\mf a}^{(1)}\in H^+_{\C^2}\,,
$$
satisfying conditions (\ref{cm2x})--(\ref{c21c2x}). They are as follows: 
\begin{itemize}
\item[Type 1.]
Given $u_0\in\C^2$, such that $||u_0||_{\C^2}=1$,
$$
\left\{\begin{array}{l}
\ds {\mf a}^{(0)}(\om)=u_0\,,
\\
{\mf a}^{(1)}(\om)=0\,.
\end{array}\right.
$$
\item[Type 2.]
Given an orthonormal basis $\{u_0,u_1\}$ of $\C^2$, 
$$
\left\{\begin{array}{l}
\ds {\mf a}^{(0)}(\om)=\om\,u_0\,,
\\
{\mf a}^{(1)}(\om)=u_1\,.
\end{array}\right.
$$

\item[Type 3.]
Given an orthonormal basis $\{u_0,u_1\}$ of $\C^2$, $0<\rho<1$ and $\theta\in\R$,
$$
\left\{\begin{array}{l}
\ds {\mf a}^{(0)}(\om)=\rho u_0+\om(1-\rho^2)^{1/2}u_1\,,
\\
\ds {\mf a}^{(1)}(\om)=e^{i\theta}[(1-\rho^2)^{1/2}u_0-\om\,\rho u_1]\,.
\end{array}\right.
$$

\item[Type 4.]
Given an orthonormal basis $\{u_0,u_1\}$ of $\C^2$, $0<\rho<1$ and $\theta\in\R$, 
$$
\left\{\begin{array}{l}
\ds {\mf a}^{(0)}(\om)=\big(\rho+(1-\rho^2)e^{i\theta}\sum_{k=1}^\infty\om^k\,\big(-\rho\,e^{i\theta}\big)^{k-1}\big)\,u_0\,,
\\
\ds {\mf a}^{(1)}(\om)=u_1\,.
\end{array}\right.
$$

\item[Type 5.]
Given $0<|\rho_0|<1$, choose three unitary vectors $u_0$, $u_1$ and $v$ in $\C^2$ such that (\ref{c2n0c3n0-5}) is satisfied.\footnote{When the coordinates of  $u_0$, $u_1$ and $v$ are real, (\ref{c2n0c3n0-5}) is equivalent to 
$$
\frac{1}{1-|\rho_0|^2}=\tan(\widehat{vu_0})\tan(\widehat{u_0u_1}),
$$
where $\widehat{ab}$ denotes the angle from $a$ to $b$ as vectors in $\R^2$.
}
Then, $|\rho_1|$ and $|\tau_0|$ are given by (\ref{c2n0c3n0-11}). Once the free arguments for $\rho_0$, $\rho_1$ and $\tau_0$ have been selected, say $\theta_{\rho_0}$, $\theta_{\rho_1}$ and $\theta_{\tau_0}$, the value of $r$ is determined by (\ref{c2n0c3n0-8}) and the value of $\tau_1$ is given by (\ref{c2n0c3n0-11}). Then,
$$
\left\{\begin{array}{l}
\ds {\mf a}^{(0)}(\om)=\rho_0u_0+\rho_1v\sum_{k=1}^\infty\om^k\,r^{k-1}\,,
\\
\ds {\mf a}^{(1)}(\om)=\tau_0u_1+\tau_1v\sum_{k=1}^\infty\om^k\,r^{k-1}\,.
\end{array}\right.
$$
\end{itemize}
\end{prop}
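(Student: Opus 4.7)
The plan is to run a case analysis on the initial coefficients $a_0^{(0)}, a_0^{(1)}$ of ${\mf a}^{(0)}, {\mf a}^{(1)}$, organized by the vanishing or non-vanishing of the first few $C_l$'s, since the conditions (\ref{cm2x})--(\ref{c21c2x}) propagate from low-order information via recurrences. First I would separate the situation in which the a.e. initial subspace of $A^+(\om)$ has dimension one (so $A^+$ is a partial isometry but not unitary) from the one in which it is all of $\C^2$. In the former case the only feasible configuration forces $\|a_0^{(0)}\|_{\C^2}=1$ and ${\mf a}^{(1)}\equiv 0$, giving Type 1. Henceforth I may assume $\|a_0^{(0)}\|_{\C^2}<1$.

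Next I would treat the sub-case $a_0^{(0)}=0$. Relations (\ref{c2x}) and (\ref{c21x}) immediately force $C_{2^k}=C_{2^k+1}=0$ for every $k\ge 1$, so by (\ref{cm2-0}) and (\ref{cm21-0}) we have $a_k^{(0)}=0$ for $k>1$ and $a_k^{(1)}=0$ for $k\ge 1$; the orthonormality conditions (\ref{a0a1}) then make $\{a_1^{(0)},a_0^{(1)}\}$ an orthonormal basis of $\C^2$, which is exactly Type 2.

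Assuming now $0<\|a_0^{(0)}\|_{\C^2}<1$, I would split according to whether $C_2=0$ or not and whether $C_3=0$ or not. The sub-case $C_2=C_3=0$ combines (\ref{c20-2}) and (\ref{c30-2}) to produce three pairwise orthogonal non-zero vectors in $\C^2$, which is impossible. The sub-case $C_2=0$, $C_3\neq 0$ fixes $a_1^{(0)}\perp a_0^{(0)}$ and, via (\ref{c20c3n0-1})--(\ref{c20c3n0-3}), pins down $a_0^{(1)},a_1^{(1)}$ as scalar multiples of $a_0^{(0)},a_1^{(0)}$ with the required modulus, yielding Type 3 after setting $\rho:=\|a_0^{(0)}\|_{\C^2}$ and $\theta:=\arg(\la_0)$. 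The symmetric sub-case $C_2\neq 0$, $C_3=0$ forces, via (\ref{c30-2}), every $a_p^{(0)}$ to be proportional to $a_0^{(0)}$, the proportionality constants $\gamma_p$ satisfying the geometric recurrence (\ref{c2n0c30-2}); unitarity (\ref{a0a1}) fixes the modulus and yields Type 4.

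The main obstacle is the last sub-case $C_2\neq 0$, $C_3\neq 0$, where both columns ${\mf a}^{(0)},{\mf a}^{(1)}$ have non-trivial tail expansions. From (\ref{c3n0-3}) and (\ref{c2n0c3n0-0}) the tails $\{a_p^{(0)}\}_{p\ge 1}$ and $\{a_p^{(1)}\}_{p\ge 1}$ are each orthogonal to a non-zero vector of $\C^2$; exploiting $\dim\C^2=2$, I would conclude that all these tail coefficients lie on a single line spanned by a unit vector $v$, thereby producing the ansatz (\ref{c2n0c3n0-3}). The twin recurrences (\ref{c2n0-2}) and (\ref{c2n0c3n0-2}) then coincide and become a scalar geometric recurrence with common ratio $r$ as in (\ref{c2n0c3n0-8}). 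Finally, I would translate the orthonormality conditions (\ref{a0a1}) into the identities (\ref{c2n0c3n0-9})--(\ref{c2n0c3n0-11}), which determine $|\rho_1|$, $|\tau_0|$, $\tau_1$ and the compatibility constraint (\ref{c2n0c3n0-5}) on the three unit vectors $u_0,u_1,v$; this matches Type 5 and exhausts all possibilities.
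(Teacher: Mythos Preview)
Your proposal is correct and follows essentially the same approach as the paper: the same case split on $\dim C$, then on $a_0^{(0)}=0$ versus $0<\|a_0^{(0)}\|<1$, and finally on the vanishing of $C_2$ and $C_3$, invoking the same equations (\ref{c20-1})--(\ref{c2n0c3n0-11}) at each stage. The only point worth tightening is in Type~5: make explicit that (\ref{c3n0-3}) and (\ref{c2n0c3n0-0}) give orthogonality of both tails to the \emph{same} non-zero vector, which is what forces all $a_p^{(0)},a_p^{(1)}$ ($p\ge1$) onto a single line $\C v$.
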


In terms of the families $\{{\mf h}_l\}_{l\in\N\cup\{0\}}\subset H^+_{\C^2}$ the result reads as follows:

\begin{prop}\label{prop23}
There are five types of families $\{{\mf h}_l\}_{l\in\N\cup\{0\}}\subset H^+_{\C^2}$ satisfying the complete set of conditions (\ref{h1}).
They are as follows:
\begin{itemize}
\item[Type 1.]
Given $u_0\in\C^2$, such that $||u_0||_{\C^2}=1$, and $B_0\in\C$, with $|B_0|^2=B$,
$$
\left\{\begin{array}{l}
\ds {\mf h}_0(\om)=\Psi_{1}=B_0\,u_0\,,
\\
{\mf h}_l(\om)=0\,,\text{ for } l\geq1\,.
\end{array}\right.
$$

\item[Type 2.]
Given an orthonormal basis $\{u_0,u_1\}$ of $\C^2$ and a pair of constants $B_0,C_1\in\C$, with $|B_0|^2=|C_1|^2=B$,
$$
\left\{\begin{array}{l}
{\mf h}_0(\om)=\om\,\Psi_{2}=\om\,{B_0}\,u_0\,,
\\
{\mf h}_1(\om)=\Psi_{3}={C_1}\,u_1\,,
\\
{\mf h}_l(\om)=0\,,\text{ for } l\geq2.
\end{array}\right.
$$

\item[Type 3.]
Given an orthonormal basis $\{u_0,u_1\}$ of $\C^2$, $0<\rho<1$, constants $B_0,C_1\in\C$ such that $|B_0|^2=B$, $|C_1|^2=B(1-\rho^2)$, and $\theta\in\R$,
$$
\left\{\begin{array}{l}
\ds {\mf h}_0(\om)=\Psi_1+\om\,\Psi_{2}=
{B_0}(\rho u_0+\om(1-\rho^2)^{1/2}u_1),
\\
\ds {\mf h}_1(\om)=\Psi_{3}+\om\,\Psi_5=
C_1e^{i\theta}[(1-\rho^2)^{1/2}u_0-\om\,\rho u_1],
\\
\begin{array}{ll}
\ds {\mf h}_{2^p-1}(\om)&\ds =\Psi_{2^{p}+2^p-1}+\om\,\Psi_{2^{p+1}+2^p-1}=
\\ &\ds =\big(-\frac{\overline{B_0}\rho(1-\rho^2)^{1/2}e^{i\theta}}{\overline{C_1}}\big)^{p-1}C_{1}e^{i\theta}[(1-\rho^2)^{1/2}u_0-\om\,\rho u_1],\quad (p>1),
\end{array}
\\
\ds {\mf h}_{l}(\om)=0,\quad \text{else}.
\end{array}\right.
$$

\item[Type 4.]
Given an orthonormal basis $\{u_0,u_1\}$ of $\C^2$, $0<\rho<1$, constants $B_0,C_1\in\C$ such that $|B_0|^2=B$, $|C_1|^2=B(1-\rho^2)$, and $\theta\in\R$,
$$
\left\{\begin{array}{l}
\ds {\mf h}_0(\om)=\sum_{k=0}^\infty \om^k\,\Psi_{2^k}=
{B_0}\big(1+\frac{1-\rho^2}{\rho}e^{i\theta}\sum_{k=1}^\infty\om^k\,\big(-\rho\,e^{i\theta}\big)^{k-1}\big)\,\rho u_0,
\\
\ds {\mf h}_1(\om)=\Psi_{3}=
C_1u_1,
\\
\ds {\mf h}_{2^p}(\om)=\Psi_{2^{p+1}+2^p}=\frac{B}{\overline{C_1}}\,\big(-\rho\,e^{i\theta}\big)^{p}\,(1-\rho^2)\,u_1,\quad (p>0),
\\
\ds {\mf h}_{l}(\om)=0,\quad \text{else}.
\end{array}\right.
$$

\item[Type 5.]
Given $0<|\rho_0|<1$, choose three unitary vectors $u_0$, $u_1$ and $v$ in $\C^2$ such that (\ref{c2n0c3n0-5}) is satisfied.
Then, $|\rho_1|$ and $|\tau_0|$ are given by (\ref{c2n0c3n0-11}). Once the free arguments for $\rho_0$, $\rho_1$ and $\tau_0$ have been selected, say $\theta_{\rho_0}$, $\theta_{\rho_1}$ and $\theta_{\tau_0}$, the value of $r$ is determined by (\ref{c2n0c3n0-8}) and the value of $\tau_1$ is given by (\ref{c2n0c3n0-11}). Choose constants $B_0,C_1\in\C$ such that $|B_0|^2=B$, $|C_1|^2=B(1-\rho_0^2)$, and select their free arguments $\theta_{B_0}$ and $\theta_{C_1}$. Then,
$$
\left\{\begin{array}{l}
\ds {\mf h}_0(\om)=\sum_{k=0}^\infty \om^k\,\Psi_{2^k}=
{B_0}\big(\rho_0u_0+\rho_1v\sum_{k=1}^\infty\om^k\,r^{k-1}\big),
\\
\ds {\mf h}_1(\om)=\sum_{k=0}^\infty \om^k\,\Psi_{2^{k+1}+1}=
{C_1}\big(\tau_0u_1+\tau_1v\sum_{k=1}^\infty\om^k\,r^{k-1}\big),
\\
\ds {\mf h}_{l}(\om)=\sum_{k=0}^\infty \om^{k}\,\Psi_{2^{p+k+1}+l}=\frac{C_l}{C_1}\,{\mf h}_1(\om),\quad (l\geq1,\,l=2^p+\sum_{t=0}^{p-1}l_t2^t),
\end{array}\right.
$$
where $C_{2^p}$ and $C_{2^p+1}$, ($p>0$), are given by (\ref{c2n0c3n0-12}), and the other $C_l$'s are calculated using (\ref{clg}). 
\end{itemize}
\end{prop}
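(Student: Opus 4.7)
The plan is to derive Proposition \ref{prop23} as a direct consequence of Proposition \ref{prop22} by feeding each of the five $M^+$-inner matrices $A^+(\om)$ into the recipe supplied by Proposition \ref{pclc2a}. That recipe prescribes ${\mf h}_0(\om) = B_0\,{\mf a}^{(0)}(\om)$ and ${\mf h}_l(\om) = C_l\,{\mf a}^{(1)}(\om)$ for $l\geq 1$, where $|B_0|^2 = B$ and the $C_l$'s are generated by (\ref{c1x})--(\ref{c21x}) together with (\ref{clg}). Since conditions (\ref{cm2x})--(\ref{c21c2x}) hold automatically for each type (by the very construction in Proposition \ref{prop22}), the Hardy families produced this way will automatically satisfy the full set of conditions (\ref{h1}), and no separate verification of those is needed.

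For Type 1, the initial subspace of $A^+$ has dimension one, so Proposition \ref{pclc2a} does not apply directly; however, the discussion preceding Proposition \ref{prop22} already shows that in this case the only non-trivial possibility is ${\mf h}_0 = \Psi_1 = B_0 u_0$ with ${\mf h}_l = 0$ for $l\geq 1$ and $|B_0|^2 = B$, matching the statement. For Types 2--5 I would unpack the Taylor series of ${\mf a}^{(0)}$ and ${\mf a}^{(1)}$ given in Proposition \ref{prop22} and compute the key inner products with $a_0^{(0)}$ that drive (\ref{c2x}) and (\ref{c21x}). In Type 2 both such inner products vanish (because $a_0^{(0)} = 0$), forcing $C_{2^k} = C_{2^k+1} = 0$ for $k\geq 1$, and (\ref{clg}) together with (\ref{cm2-0})--(\ref{cm21-0}) propagates this to all $C_l$ with $l\geq 2$, yielding the stated two-term family.

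In Type 3 the orthogonality $\<a_1^{(0)}, a_0^{(0)}\> = 0$ kills every $C_{2^k}$, while $\<a_0^{(1)}, a_0^{(0)}\>=e^{i\theta}\rho(1-\rho^2)^{1/2}$ and $a_k^{(1)} = 0$ for $k\geq 2$ leave only $C_3$ nonzero among the $C_{2^k+1}$; then (\ref{clg}) shows that $C_l$ survives exactly for $l = 2^p - 1$, with the geometric prefactor written in the statement. Type 4 is dual: $\<a_0^{(1)}, a_0^{(0)}\> = 0$ kills every $C_{2^k+1}$ with $k\geq 1$, while the geometric structure of $\{a_k^{(0)}\}$ makes all $C_{2^p}$ nonzero and geometric in $-\rho e^{i\theta}$. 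In Type 5 the Taylor coefficients are $a_0^{(0)} = \rho_0 u_0$, $a_p^{(0)} = \rho_1 r^{p-1} v$, $a_0^{(1)} = \tau_0 u_1$, $a_p^{(1)} = \tau_1 r^{p-1} v$ for $p\geq 1$; substituting these into (\ref{c2x})--(\ref{c21x}) yields exactly (\ref{c2n0c3n0-12}), and (\ref{clg}) then determines all remaining $C_l$.

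The main potential obstacle is the bookkeeping in Type 5, where several complex parameters ($\rho_0,\rho_1,\tau_0,\tau_1,r$) and three unit vectors must be coordinated consistently; however, all the nontrivial identities needed there, namely (\ref{c2n0c3n0-5})--(\ref{c2n0c3n0-11}), have already been established in the discussion leading to Proposition \ref{prop22}, so what remains is purely substitution into the defining formulas ${\mf h}_0 = B_0\,{\mf a}^{(0)}$ and ${\mf h}_l = C_l\,{\mf a}^{(1)}$, with the rewriting ${\mf h}_l = (C_l/C_1){\mf h}_1$ that appears in the statement. With these computations in hand, the explicit expressions for ${\mf h}_0$, ${\mf h}_1$ and the subsequent ${\mf h}_l$ in Proposition \ref{prop23} follow immediately for each type.
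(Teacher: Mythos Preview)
Your proposal is correct and matches the paper's approach: the paper does not give a separate proof block for Proposition \ref{prop23}, but derives it alongside Proposition \ref{prop22} through the same case analysis (on $a_0^{(0)}=0$ vs.\ $0<\|a_0^{(0)}\|<1$, and then on $C_2,C_3$), with the $\{{\mf h}_l\}$ obtained exactly as you describe, via ${\mf h}_0=B_0{\mf a}^{(0)}$, ${\mf h}_l=C_l{\mf a}^{(1)}$ and the formulas (\ref{c1x})--(\ref{c21x}), (\ref{clg}). Your type-by-type computations of the $C_l$'s agree with those in the paper's discussion.
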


As commented in the Introduction, for type 1 functions $A^+(\om)$, both functions $\psi_1$ and $\psi_2$ are proportional to the Haar wavelet, and type 3 functions $A^+(\om)$ lead to the reflected version of functions $\psi_1$ and $\psi_2$ obtained from  type 4 functions $A^+(\om)$ with the same parameters. For types 2--5 functions $A^+(\om)$, some examples of real functions $\psi_1$ and $\psi_2$ are shown in figures \ref{fig1}--\ref{fig4}.

Recall that the $M^+$-inner matrix function $A^+(\om)$ and the $M^+$-wandering subspace generated by $\{{\mf h}_0,{\mf h}_1\}$ in $H^+_{\C^2}$ are connected by  Halmos's lemma \ref{lwr}.  
There, the  subspace generated by $\{{\mf h}_0,{\mf h}_1\}$ uniquely determines $A^+(\om)$ to within a constant partially isometric factor on the right. In particular, in the non-trivial types 2--5 of propositions \ref{prop22} and \ref{prop23}, to within a constant unitary factor on the right.

\begin{prop}\label{prop24}
Let $A^+(\om)= \left(\begin{array}{cc} {\mf a}^{(0)}(\om) & {\mf a}^{(1)}(\om)\end{array}\right)$
be an $M^+$-inner ($2\times 2$)-matrix function, unitary for a.e. $\om\in\partial\D$, with ${\mf a}^{(0)},{\mf a}^{(1)}\in H^+_{\C^2}$ satisfying conditions (\ref{cm2x})--(\ref{c21c2x}). Given an arbitrary constant unitary ($2\times 2$)-matrix $U$, consider the  ($2\times 2$)-matrix function $B^+(\om)$ defined by 
$$
B^{+}(\om) = \left( \begin{array}{cc} {\mf b}^{(0)}(\om) & {\mf b}^{(1)}(\om) \end{array} \right) := A^{+}(\om) \cdot U\,.
$$
Then, $B^+(\om)$ is also an $M^+$-inner ($2\times 2$)-matrix function, unitary for a.e. $\om\in\partial\D$, and such that ${\mf b}^{(0)}(\om), \, {\mf b}^{(1)}(\om)$ verify conditions (\ref{cm2x})--(\ref{c21c2x}).
\end{prop}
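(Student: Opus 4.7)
The result has two parts to verify: (i) $B^{+}$ is $M^{+}$-inner and unitary for a.e.\ $\om \in \partial\D$, and (ii) its columns satisfy (\ref{cm2x})--(\ref{c21c2x}).

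Part (i) is clean. The unitarity of $B^{+}(\om) = A^{+}(\om)\,U$ at a.e.\ $\om$ is immediate as a product of unitaries. For the $M^{+}$-inner property, observe that the constant unitary $U$ leaves the subspace $\CC$ of constant $\C^{2}$-valued functions invariant, hence $B^{+}\CC = A^{+}(U\CC) = A^{+}\CC$. Since $A^{+}$ is $M^{+}$-inner, $A^{+}\CC$ is a wandering subspace for $M^{+}$ by Halmos's lemma \ref{lwr}; hence so is $B^{+}\CC$, and the reverse direction of the same lemma yields that $B^{+}$ is $M^{+}$-inner.

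Part (ii) is the substantive one. My plan is to proceed via the classification in proposition \ref{prop22}: since $A^{+}$ is unitary a.e.\ and satisfies (\ref{cm2x})--(\ref{c21c2x}), it belongs to one of types 2--5. For each of these four cases, one computes the columns of $A^{+}U$, which are routine linear combinations of the columns of $A^{+}$ with coefficients read off from $U$, and then identifies the resulting matrix as belonging to one of types 2--5, after an appropriate change of orthonormal basis $\{u_{0}, u_{1}\}$ and reparametrization. The remark in the Introduction that ``starting from a simple tight wavelet frame of type 2, one can obtain a frame of type 3 by means of a unitary matrix $U$'' and similarly ``a frame of type 4 $\ldots$ a frame of type 5'' already sketches the pattern; the cases fit together cleanly, with certain sub-cases (a diagonal or anti-diagonal $U$) collapsing a higher-complexity type to a simpler one.

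The main obstacle is carefully tracking phases. Types 3, 4 and especially type 5 impose rigid constraints on the phases of the parameters (via relations such as (\ref{c2n0c3n0-5})--(\ref{c2n0c3n0-11})), and the transformed columns must satisfy these constraints exactly. The essential algebraic input is the set of identities characterising a $2 \times 2$ unitary $U$, most usefully written as $U_{11} = \overline{U_{00}}\,\det U$ and $U_{01} = -\overline{U_{10}}\,\det U$, together with $|U_{00}| = |U_{11}|$ and $|U_{01}| = |U_{10}|$; these force exactly the phase compatibility required. A more uniform but computationally heavier alternative is to substitute $b^{(i)}_{k} = U_{0i}\,a^{(0)}_{k} + U_{1i}\,a^{(1)}_{k}$ directly into each of (\ref{cm2x})--(\ref{c21c2x}) and collapse terms using (\ref{a0a1}) for $A^{+}$ together with the column orthonormality of $U$; this bypasses the classification but demands patient manipulation of the nested sums in (\ref{c2c2x})--(\ref{c21c2x}).
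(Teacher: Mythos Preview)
Your argument for part (i) is fine, and your primary plan for part (ii) --- running $A^{+}U$ through the classification of proposition \ref{prop22} and reparametrising each of the four types --- is a correct strategy. It is, however, not the route the paper takes, and it is the more laborious one: you yourself identify the phase-tracking in types 3--5 as the main obstacle, and that work is real.

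The paper's proof is closer to your alternative (b), but with a key simplification you miss. Writing out the unitarity constraints on $U$ (equation (\ref{CondMatUnit})) and substituting $b^{(i)}_{k}=u_{1,i+1}\,a^{(0)}_{k}+u_{2,i+1}\,a^{(1)}_{k}$ into each of (\ref{cm2x})--(\ref{c21c2x}), one finds that \emph{all} of these conditions for $B^{+}$ reduce to the single scalar identity
\[
\big(1-\|a^{(0)}_{0}\|^{2}\big)\,\Big|\,u_{11}-u_{21}\,\frac{\langle a^{(1)}_{0},a^{(0)}_{0}\rangle}{1-\|a^{(0)}_{0}\|^{2}}\,\Big|^{2}
\;=\;1-\|u_{11}\,a^{(0)}_{0}+u_{21}\,a^{(1)}_{0}\|^{2},
\]
which the paper labels (\ref{ConditionChangeCase}). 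The paper then checks this identity type-by-type; in fact, expanding both sides and using $|u_{11}|^{2}+|u_{21}|^{2}=1$, one sees it is equivalent (for $u_{21}\neq 0$) to (\ref{cm21x}) at $p=0$, so it follows directly from the hypotheses on $A^{+}$. Either way, the classification enters only to verify one scalar equation, not to carry out a full reparametrisation.

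One correction to your alternative (b): you propose to ``collapse terms using (\ref{a0a1}) for $A^{+}$ together with the column orthonormality of $U$''. That is not enough. The relations (\ref{a0a1}) govern the full series $\sum_{k}\langle a^{(i)}_{k},a^{(j)}_{k+m}\rangle$, whereas (\ref{cm2x})--(\ref{c21c2x}) involve truncated sums such as $\sum_{r=0}^{p}\langle a^{(0)}_{p'-r},a^{(0)}_{p-r}\rangle$. To collapse those you must feed in (\ref{cm2x})--(\ref{c21c2x}) for $A^{+}$ themselves, not just (\ref{a0a1}); once you do, the reduction to (\ref{ConditionChangeCase}) goes through as above.
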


\begin{proof}
Conditions (\ref{cm2x})--(\ref{c21c2x}) for ${\mf a}^{(0)},{\mf a}^{(1)}$ are given in terms of their Fourier-Taylor coefficients: 
$$
{\mf a}^{(0)}(\om)=\sum_{k=0}^\infty \om^k\,a^{(0)}_k\,,\quad
{\mf a}^{(1)}(\om)=\sum_{k=0}^\infty \om^k\,a^{(1)}_k\,.
$$
On the other hand, the constant matrix $U= \left( \begin{array}{cc} u_{11} & u_{12}\\ u_{21}& u_{22}\end{array} \right)$ is unitary if and only if 
\begin{equation}\label{CondMatUnit}
\begin{array}{c}
|u_{12}|^2 = |u_{21}|^2 = 1 - |u_{11}|^2 = 1 - |u_{22}|^2\\[2ex]
\theta_{11} - \theta_{12} = \theta_{21} - \theta_{22} + \pi \text{  mod} (2\pi)
\end{array}
\end{equation}
with $u_{jk} = |u_{jk}| e^{i\theta_{jk}}, \, j,k\in \{1,2\}$.
Using (\ref{CondMatUnit}), direct calculations show that ${\mf b}^{(0)}(\om), \, {\mf b}^{(1)}(\om)$ verify conditions (\ref{cm2x})--(\ref{c21c2x}) if and only if 
\begin{equation}\label{ConditionChangeCase}
\big( 1 - \| a^{(0)}_{0} \|_{\C^2}^{2} \big)\, \Big| u_{11} - u_{21} \frac{\< a^{(1)}_{0}, a^{(0)}_{0} \>}{1- \| a^{(0)}_{0} \|_{\C^2}^{2}} \Big|^{2} = 1 - \| a^{(0)}_{0} u_{11} + a^{(1)}_{0} u_{21} \|_{\C^2}^{2}\,.
\end{equation}
Finally, it is easy to see that, given $A^{+}(\om) = \big( a^{(0)}(\om) \quad a^{(1)}(\om) \big)$ in any of the types 2--5 of proposition \ref{prop22} and an arbitrary constant unitary matrix $U$, condition (\ref{ConditionChangeCase}) is always satisfied.
\end{proof}

In other words, proposition \ref{prop24} asserts that, given an $M^+$-inner ($2\times 2$)-matrix function $A^+(\om)$ in the non-trivial types 2--5 of proposition \ref{prop22}, $B^{+}(\om) =  A^{+}(\om) \cdot U$ is also an $M^+$-inner ($2\times 2$)-matrix function in the non-trivial types 2--5 of proposition \ref{prop22}, for every constant unitary ($2\times 2$)-matrix $U$.
These transformations connect $M^+$-inner matrix functions in types 2 and 3 on the one hand (those with a finite number of non-zero Fourier-Taylor coefficients) and $M^+$-inner matrix functions in types 4 and 5 on the other hand (those with an infinite number of non-zero Fourier-Taylor coefficients).
To be precise:
\begin{itemize}
\item[(i)] 
Starting from a type 2 matrix function $A^{+}(\om)$, where  
$$
\left\{\begin{array}{l}
\ds {\mf a}^{(0)}(\om)=\om\,u_0\,,
\\
{\mf a}^{(1)}(\om)=u_1\,,
\end{array}\right.
$$
for $B^{+}(\om)=A^{+}(\om)\cdot U$ one has:
\begin{enumerate}
\item{
If $U$ is a diagonal unitary matrix, i.e., $u_{12}=u_{21}=0$ and $|u_{11}|=| u_{22} |= 1$, then $B^{+}(\om)$ is also a type 2 matrix function:
$$
\left\{\begin{array}{l}
\ds {\mf b}^{(0)}(\om)=\om\,u_0'\,,
\\
{\mf b}^{(1)}(\om)=u_1'\,,
\end{array}\right.
$$
where $u_0' = e^{i \theta_{11}} \, u_0, \, u_1' = e^{i \theta_{22}} \, u_1$.

\item
If $U$ is not diagonal, that is, $|u_{12}| = |u_{21}| > 0$, then $B^{+}(\om)$ is a type 3 matrix function:
$$
\left\{\begin{array}{l}
{\mf b}^{(0)}(\om) = \rho u_0' + \om (1 - \rho^{2})^{1/2} u_1'\,,
\\
{\mf b}^{(1)}(\om) = e^{i \theta} (1 - \rho^{2})^{1/2} u_0' - \om e^{i \theta} \rho u_1'\,,
\end{array}\right.
$$
where $u_0' = e^{i \theta_{21}} \, u_1, \, u_1' = e^{i \theta_{11}} \, u_0$, $\rho = |u_{12}|$ and $\theta = \theta_{22} - \theta_{21} = \theta_{12} - \theta_{11} + \pi$.}
\end{enumerate}

\item[(ii)]
Starting from a type 4 matrix function $A^{+}(\om)$, where  
$$
\left\{\begin{array}{l}
\ds{\mf a}^{(0)}(\om) = \rho \, u_0 + \sum_{k = 1}^{\infty} \, \om^{k} \, e^{i \theta} \, (1 - \rho^{2})^{1/2} \, (-\rho \, e^{i \theta})^{k-1} \, u_0\,,
\cr
\ds{\mf a}^{(1)}(\om) = u_1\,,
\end{array}\right.
$$
for $B^{+}(\om)=A^{+}(\om)\cdot U$ one has:
\begin{enumerate}
\item
If $U$ is a diagonal unitary matrix, then $B^{+}(\om)$ is also a type 4 matrix function:
$$
\left\{\begin{array}{l}
\ds{\mf b}^{(0)}(\om) = \rho \, u_0' + \sum_{k = 1}^{\infty} \, \om^{k} \, e^{i \theta} \, (1 - \rho^{2})^{1/2} \, (-\rho \, e^{i \theta})^{k-1} \, u_0'\,,
\cr
\ds{\mf b}^{(1)}(\om) = u_1'\,,
\end{array}\right.
$$
where $u_0' = e^{i \theta_{11}} \, u_0, \, u_1' = e^{i \theta_{22}} \, u_1$.

\item
If $U$ is not diagonal, then $B^{+}(\om)$ is a type 5 matrix function:
$$
\left\{\begin{array}{l}
\ds{\mf b}^{(0)}(\om) = \rho_{0} \, u_0' + \rho_{1} \, v' \, \sum_{k = 1}^{\infty} \, r^{k-1} \, \om^{k}\,,
\cr
\ds{\mf b}^{(1)}(\om) = \tau_{0} \, u_1' + \tau_{1} \, v' \, \sum_{k = 1}^{\infty} \, r^{k-1} \, \om^{k}\,,
\end{array}\right.
$$
where 
$$
\left\{\begin{array}{l}
\ds u_0' = \frac{e^{i \theta_{11}} \, |u_{11}| \, \rho}{\rho_{0}} \, u_0 + \frac{e^{i \theta_{21}} \, |u_{12}|}{\rho_{0}} \, u_1,
\quad
u_1' = \frac{e^{i \theta_{12}} \, |u_{12}| \, \rho}{\tau_{0}} \, u_0 + \frac{e^{i \theta_{22}} \, |u_{11}|}{\tau_{0}} \, u_1,
\cr
\ds v' = \frac{e^{i(\theta_{11} + \theta)} \, |u_{11}| \, (1 - \rho^{2})}{\rho_{1}} \, u_0,\cr
\ds\rho_{0} = (|u_{11}|^{2} \rho^{2} + |u_{12}|^{2})^{1/2} \, e^{i\theta_{\rho_0}},
\quad
\rho_{1} =  |u_{11}| \, \, (1 - \rho^{2}) \, e^{i\theta_{\rho_1}},
\cr
\ds\tau_{0} = (|u_{12}|^{2} \rho^{2} + |u_{11}|^{2})^{1/2} \, e^{i\theta_{\tau_0}},
\quad 
\tau_{1} = |u_{12}| \, (1 - \rho^{2}) \, e^{i(\theta_{\rho_1} + \theta_{12} - \theta_{11})},
\cr
r = - \rho e^{i \theta},
\cr
\theta_{\rho_0},\, \theta_{\rho_1},\, \theta_{\tau_0} \in \R.
\end{array}\right.
$$
\end{enumerate}

\end{itemize}
These relationships exhaust the four family types 2--5 of proposition \ref{prop22}.

%------------------------

\section*{Acknowledgements}
This work was partially supported by research projects MTM2012-31439 and MTM2014-57129-C2-1-P (Secretar\'{\i}a General de Ciencia, Tecnolog\'{\i}a e Innovaci\'on, Ministerio de Econom\'{\i}a y Competitividad, Spain).

%-----------------------------

%\appendix

\section*{Appendix: Haar bases}\label{ap2}

Let $\varphi^H$ and $\psi^H$ be the {\it Haar scaling function and wavelet} given by
$$
\varphi^H:=\chi_{[0,1)}\,,\quad \psi^H:=\chi_{[0,1/2)}-\chi_{[1/2,1)}\,, 
$$
where $\chi$ denotes the characteristic function:
$$
\chi_{[a,b)}(x):=\left\{\begin{array}{ll}1,& \text{ if }x\in[a,b),\\ 0,& \text{ otherwise,}\end{array}\right.
$$
and consider the {\it ``Haar ONBs"} $\{L_{i}^{(0)}(x)\}_{i\in\N\cup\{0\}}$ of $L^2[0,1)$ and $\{K_{\pm,j}^{(0)}(x)\}_{j\in\N\cup\{0\}}$ of $L^2[\pm 1,\pm 2)$ defined by
\be\label{hbb0}
\begin{array}{l}
L_{0}^{(0)}:=\varphi^H_{0,0}\,;\\
L_{2^p+q}^{(0)}:=\psi^H_{p,q}\,, \quad \text {for }p=0,1,\,\dots;\, q=0,1,\ldots,2^{p}-1\,;
\\[1ex]
K_{+,0}^{(0)}:=\varphi^H_{0,1}\,;\\
K_{+,2^p+q}^{(0)}:=\psi^H_{p,2^p+q}\,, \quad \text {for }p=0,1,\,\dots;\,q=0,1,\ldots,2^{p}-1\,;
\\[1ex]
K_{-,0}^{(0)}:=\varphi^H_{0,-2}\,;\\
K_{-,2^p+q}^{(0)}:=\psi^H_{p,-2^{p+1}+q}\,, \quad \text {for }p=0,1,\,\dots;\,q=0,1,\ldots,2^{p}-1\,.
\end{array}
\ee
In accordance with (\ref{lt}) and (\ref{kd}), for each $n,m\in\Z$,
\be\label{hbb}
\begin{array}{l}
L_{0}^{(n)}=\varphi^H_{0,n}\,,\\
L_{2^p+q}^{(n)}=\psi^H_{p,q+2^p n}\,, \quad \text {for }p=0,1,\,\dots;\, q=0,1,\ldots,2^{p}-1\,;
\\[1ex]
K_{+,0}^{(m)}:=\varphi^H_{m,1}\,;\\
K_{+,2^p+q}^{(m)}:=\psi^H_{p+m,2^p+q}\,, \quad \text {for }p=0,1,\,\dots;\,q=0,1,\ldots,2^{p}-1\,;
\\[1ex]
K_{-,0}^{(m)}:=\varphi^H_{m,-2}\,;\\
K_{-,2^p+q}^{(m)}:=\psi^H_{p+m,-2^{p+1}+q}\,, \quad \text {for }p=0,1,\,\dots;\,q=0,1,\ldots,2^{p}-1\,.
\end{array}
\ee
For these bases the change of representation matrix $\big(\alpha_{i,n}^{s,j,m}\big)$, defined by (\ref{chm}), is as follows:

\smallskip\noindent
For $n=0$, $i=0$, 
$$
\alpha_{0,0}^{s,j,m}=\left\{
\begin{array}{ll}
\ds 2^{-m/2},& \text{ if } s=+,\,\,j=0,\,\, m>0,\\
\ds 0, &  \text{ otherwise.}
\end{array}
\right.
$$
For $n=0$, $r=0,1,2,\ldots$,
$$
\alpha_{2^r,0}^{s,j,m}=\left\{
\begin{array}{ll}
\ds -2^{-1/2},& \text{ if } s=+,\,\, j=0,\,\, m=r+1,\\
\ds 2^{(r-m)/2},& \text{ if } s=+,\,\, j=0,\,\, m>r+1,\\
\ds 0, &  \text{ otherwise.}
\end{array}
\right.
$$
For $n=0$, $r=0,1,2,\ldots$ and $t=2^p+q$ (with $0\leq p<r$ and $q=0,1,\ldots,2^p-1$),
$$
\alpha_{2^r+t,0}^{s,j,m}=\left\{
\begin{array}{ll}
\ds 1,& \text{ if } s=+,\,\, j=t,\,\, m=r-p,\\
\ds 0, &  \text{ otherwise.}
\end{array}
\right.
$$
For $n=1$ and every $i\geq0$,
$$
\alpha_{i,1}^{s,j,m}=\left\{
\begin{array}{ll}
\ds 1,& \text{ if } s=+,\,\, j=i,\,\, m=0,\\
\ds 0, &  \text{ otherwise.}
\end{array}
\right.
$$
For $n>1$ and $i>0$, with $n=2^u+v$ ($u=1,2,\ldots$, $v=0,1,\ldots,2^u-1$) and $i=2^r+t$ ($r=0,1,\ldots$, $t=0,1,\ldots,2^r-1$), 
$$
\alpha_{2^r+t,2^u+v}^{s,j,m}=\left\{
\begin{array}{ll}
\ds 1,& \text{ if } s=+,\,\, j=2^r(2^u+v)+t,\,\, m=-u,\\
\ds 0, &  \text{ otherwise.}
\end{array}
\right.
$$
For $n>1$ and $i=0$, with $n=2^u+v$ ($u=1,2,\ldots$, $v=0,1,\ldots,2^u-1$), 
$$
\alpha_{0,2^u+v}^{s,j,m}=\left\{
\begin{array}{ll}
\ds 2^{-u/2},& \text{ if } s=+,\,\, j=0,\,\, m=-u,\\
\ds (-1)^{w(u,v,p)}\, 2^{(p-u)/2},& \text{ if } s=+,\,\, j=2^p+\left[{v}/{2^{u-p}}\right]\\
&\text{ for }0\leq p<u,\,\, m=-u,\\
\ds 0, &  \text{ otherwise,}
\end{array}
\right.
$$
where $[\cdot]$ denotes ``entire part of" and\footnote{Taking into account the binary expression $v=\sum_{k=0}^{u-1} t_k\,2^k$, with $t_k=0$ or $1$, one has $w(u,v,p)=t_{u-p-1}$ and $\left[{v}/{2^{u-p}}\right]=\sum_{k=u-p}^{u-1} t_k\,2^{k-(u-p)}$.}
\be\label{apph1}
w(u,v,p)=\left[\frac{v-2^{u-p}[v/2^{u-p}]}{2^{u-p-1}}\right].
\ee
For $n=-1$, $i=0$, 
$$
\alpha_{0,-1}^{s,j,m}=\left\{
\begin{array}{ll}
\ds 2^{-m/2},& \text{ if } s=-,\,\,j=0,\,\, m>0,\\
\ds 0, &  \text{ otherwise.}
\end{array}
\right.
$$
For $n=-1$, $r=0,1,2,\ldots$,
$$
\alpha_{2^{r+1}-1,-1}^{s,j,m}=\left\{
\begin{array}{ll}
\ds 2^{-1/2},& \text{ if } s=-,\,\, j=0,\,\, m=r+1,\\
\ds -2^{(r-m)/2},& \text{ if } s=-,\,\, j=0,\,\, m>r+1,\\
\ds 0, &  \text{ otherwise.}
\end{array}
\right.
$$
For $n=-1$, $r=1,2,\ldots$, $0\leq p<r$ and $q=0,1,\ldots,2^p-1$,
$$
\alpha_{2^{r+1}-2^{p+1}+q,-1}^{s,j,m}=\left\{
\begin{array}{ll}
\ds 1,& \text{ if } s=-,\,\, j=2^p+q,\,\, m=r-p,\\
\ds 0, &  \text{ otherwise.}
\end{array}
\right.
$$
For $n=-2$ and every $i\geq0$,
$$
\alpha_{i,-2}^{s,j,m}=\left\{
\begin{array}{ll}
\ds 1,& \text{ if } s=-,\,\, j=i,\,\, m=0,\\
\ds 0, &  \text{ otherwise.}
\end{array}
\right.
$$
For $n<-2$ and $i>0$, with $n=-2^{u+1}+v$ ($u=1,2,\ldots$, $v=0,1,\ldots,2^u-1$) and $i=2^r+t$ ($r=0,1,\ldots$, $t=0,1,\ldots,2^r-1$), 
$$
\alpha_{2^r+t,-2^{u+1}+v}^{s,j,m}=\left\{
\begin{array}{ll}
\ds 1,& \text{ if } s=-,\,\, j=2^r(2^u+v)+t,\,\, m=-u,\\
\ds 0, &  \text{ otherwise.}
\end{array}
\right.
$$
For $n<-2$ and $i=0$, with $n=-2^{u+1}+v$ ($u=1,2,\ldots$, $v=0,1,\ldots,2^u-1$),
$$
\alpha_{0,-2^{u+1}+v}^{s,j,m}=\left\{
\begin{array}{ll}
\ds 2^{-u/2},& \text{ if } s=-,\,\, j=0,\,\, m=-u,\\
\ds (-1)^{w(u,v,p)}\, 2^{(p-u)/2},& \text{ if } s=-,\,\, j=2^p+\left[{v}/{2^{u-p}}\right]\\
&\text{ for }0\leq p<u,\,\, m=-u,\\
\ds 0, &  \text{ otherwise,}
\end{array}
\right.
$$
where $w(u,v,p)$ is given by (\ref{apph1}).

%%---------------------------------
%\bibliographystyle{acm}%{abbrv}{plain}{alpha}
%\bibliography{F:/Nani/FGC_wavelet}
%%---------------------------------

\end{document}